\theoremstyle{plain}
\newtheorem{theorem}{Theorem}[section]          
\newtheorem{corollary}[theorem]{Corollary}     \newtheorem*{nonumbercorollary}{Corollary}
\newtheorem{lemma}[theorem]{Lemma}              
\newtheorem{proposition}[theorem]{Proposition}
\newtheorem{main}{Theorem}
\theoremstyle{definition}
\newtheorem{definition}[theorem]{Definition}
\newtheorem{notation}[theorem]{Notation} 
\newtheorem{remark}[theorem]{Remark}\newtheorem*{nonumberremark}{Remark}
\numberwithin{table}{section}
\newcommand{\Z}{\mathbb{Z}}\newcommand{\Q}{\mathbb{Q}}
\newcommand{\s}{\mathbb{S}}
\DeclareMathOperator{\codim}{codim}
\DeclareMathOperator{\tr}{tr}
\DeclareMathOperator{\diag}{diag}
\newcommand{\tensor}{\otimes}
\newcommand{\embedded}{\hookrightarrow}
\title{Obstructions to free actions on Bazaikin spaces}
\author{Elahe Khalili Samani}
\email{ekhalili@syr.edu}
\address{Department of Mathematics, Syracuse University, Syracuse, New York 13244}
\begin{document}

\begin{abstract}
Apart from spheres and an infinite family of manifolds in dimension seven, Bazaikin spaces are the only known examples 
of simply connected Riemannian manifolds with positive sectional curvature in odd dimensions. 
We consider positively curved Riemannian manifolds whose universal covers have the same cohomology as Bazaikin spaces 
and prove structural results for the fundamental group in the presence of torus symmetry.\vspace{-0.1in}
\end{abstract}

\maketitle

\section{Introduction}\label{sec:introduction}

An important question in Riemannian geometry is to investigate the structure of fundamental groups of Riemannian manifolds 
with non-negative sectional curvature. A well known example of this is a theorem of Gromov which states that the fundamental group of a complete Riemannian manifold $M^n$ with non-negative sectional curvature has at most $C(n)$ generators, where $C(n)$ is a constant depending only on the dimension of $M$ (see \cite{Gromov}). 
In addition, the Cheeger-Gromoll splitting theorem, together with a theorem of Wilking, implies that a group $G$ 
is the fundamental group of a non-negatively curved Riemannian manifold if and only if $G$ has a normal subgroup 
isomorphic to ${\mathbb{Z}}^d$  such that the quotient group is finite (see \cite{CheegerGromoll} and \cite[Theorem 2.1]{Wilking2000}).

Under the stronger assumption of positive curvature, the only known further obstructions are the results of Bonnet-Myers
and Synge which together imply that the fundamental group of a positively curved Riemannian manifold is finite 
and, moreover, trivial or ${\mathbb{Z}}_2$ if the dimension of the manifold is even.

As for examples, the largest class of groups which arise as fundamental groups of positively curved manifolds are the spherical space form groups. These are groups that act freely and linearly on spheres (for a complete classification, see \cite[Chapter III]{Wolf}). The first step in the classification of spherical space form groups is to establish that they satisfy the $(p^2)$
and $(2p)$ conditons, which mean respectively that every subgroup of order $p^2$ is cyclic and every involution 
is central. The $(p^2)$ condition was proved by Smith for groups acting freely on a mod $p$ homology sphere
(see \cite{Smith}). Moreover, the $(2p)$ condition holds for groups acting freely on a mod $2$ homology sphere by results
of Milnor and Davis (see \cite{Milnor} and \cite{Davis}).

In 1965, Chern asked if the $(p^2)$ condition holds for the fundamental groups of Riemannian manifolds with positive
sectional curvature. This question was not answered for over 30 years until Shankar proved that there are examples 
for which the $(p^2)$ condition fails for $p=2$ and the $(2p)$ condition fails for all $p$ (see \cite{Shankar}). 
Later, Bazaikin and Grove-Shankar (see \cite{Bazaikin1999} and \cite{GroveShankar}) showed that there are other classes 
of positively curved manifolds which fail to satisfy the $(p^2)$ condition for $p=3$. 
It remains an open problem whether the $(p^2)$ condition holds for $p\geq 5$ (see \cite{GroveShankarZiller}).
 
In the presence of symmetry, much more is known. For example, the only groups that can arise as fundamental groups 
of Riemannian homogeneous spaces with positive sectional curvature are finite subgroups of $SO(3)$ or $SU(2)$
(see \cite{WilkingZiller}). Under more relaxed symmetry assumptions, the most remarkable result in this direction is due to Rong (see \cite{Rong1999}): The fundamental group of an odd-dimensional positively curved Riemannian manifold $M$
with circle symmetry has a cyclic subgroup of index at most a constant $w(n)$ depending only on the dimension of $M$. 
In particular, the $(p^2)$ condition holds for sufficiently large $p$ for this class of manifolds. 
The constant $w(n)$ here is larger than Gromov's Betti number estimate. Part of our motivation is to refine the estimate for $w(n)$ in dimension $13$. Our main result replaces $S^1$ by $T^2$ or $T^3$ and restricts to the class of manifolds 
whose universal covers have the rational cohomology of a Bazaikin space (see \cite{Bazaikin1996} and \cite{FloritZiller}).

\begin{main}\label{thm:index 18}
Let $M^{13}$ be a closed Riemannian manifold with positive sectional curvature.  Suppose that the universal cover of $M$
is a rational cohomology Bazaikin space.
     \begin{enumerate} 
     \item If $M$ admits an effective isometric $T^2$-action, then $\pi_1(M)$ has a cyclic subgroup whose index $D$      
     either is $27$ or divides $18$. Moreover, $D\leq 9$ if the universal cover of $M$ is also a mod $3$ cohomology Bazaikin space.
     \item If $M$ admits an effective isometric $T^3$-action, then $\pi_1(M)$ has a cyclic subgroup of index at most three.
     \end{enumerate}
\end{main}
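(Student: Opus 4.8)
The plan is to study the fixed-point structure of the torus action on the universal cover $\tilde M$ and combine it with the action of the deck transformation group $\Gamma=\pi_1(M)$, which commutes with the lifted torus action. First I would invoke the standard reduction: since $\tilde M$ has the rational cohomology of a Bazaikin space, its rational cohomology ring is that of a manifold with $b_0=b_4=b_8=b_{12}=\dots=1$ concentrated in a narrow range (generated by a degree-$4$ class $u$ with $u^2\neq 0$ and the Poincaré-dual odd class), so in particular $\chi(\tilde M)=0$ but the positive part of the cohomology is rigidly controlled. A connected group acting on $\tilde M$ by isometries, being connected, acts trivially on cohomology, and one applies Conner's theorem so that fixed-point sets of subtori are themselves rational cohomology Bazaikin-like spaces or have similarly constrained cohomology in lower dimension. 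The key is to locate a circle subgroup whose fixed-point set $N=\tilde M^{S^1}$ is nonempty (this uses that $\chi$ of the relevant even-dimensional strata forces nonempty fixed sets, via the positive-curvature fixed-point results of Berger/Grove–Searle: an isometric torus action on a positively curved manifold of dimension $13$ with a $2$- or $3$-torus has nonempty fixed-point set for some circle by rank considerations).

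**The group-theoretic core.**

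Once a circle $S^1\subseteq T^k$ with $N=\tilde M^{S^1}\neq\emptyset$ is fixed, I would analyze how $\Gamma$ acts. Because $\Gamma$ commutes with $T^k$, $\Gamma$ permutes the components of $N$; passing to the subgroup $\Gamma_0$ stabilizing a component $N_0$ and controlling the (bounded) index $[\Gamma:\Gamma_0]$, one gets a free isometric action of $\Gamma_0$ on the positively curved manifold $N_0$, which is again a rational cohomology space of Bazaikin type but of strictly smaller dimension — ideally $N_0$ has the rational cohomology of $S^5$ or of a lens-space-like quotient, or is a point/sphere in low enough dimension that spherical space form theory applies. Iterating the fixed-point reduction (Grove–Searle / Frankel-type connectedness arguments) one reduces to a free action of a finite index subgroup of $\Gamma$ on a rational cohomology sphere or complex/quaternionic projective-like space, where the $(p^2)$ condition, the $(2p)$ condition, and cyclicity of Sylow subgroups are known. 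The precise numerics — that the index $D$ of a cyclic subgroup is $27$ or a divisor of $18$ in the $T^2$ case, and at most $3$ in the $T^3$ case — would come from a careful bookkeeping of which finite groups can act freely and linearly on $S^5$ (or its rational homology analogues) while also being realized as isometry groups compatible with the Bazaikin cohomology: the relevant linear space form groups acting on $S^5$ have order controlled by small primes, and the factors $2$, $3$, and $3^3$ enter through $\mathrm{SU}(2)$-type binary groups and the cyclic group $\mathbb Z/3$ appearing in the Weyl group / center of the relevant isotropy representations.

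**Handling the mod-$p$ refinements.**

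For the sharper statements — $D\le 9$ when $\tilde M$ is in addition a mod $3$ cohomology Bazaikin space, and $D\le 3$ under $T^3$ — I would use that the mod $3$ cohomology hypothesis upgrades the Smith-theoretic conclusions: $\mathbb Z/3$-subgroups of $\Gamma$ then have fixed sets in $\tilde M$ that are again mod $3$ cohomology Bazaikin spaces, so a $\mathbb Z/3\times\mathbb Z/3$ inside $\Gamma$ would produce a chain of nested fixed sets violating the dimension/positive-curvature constraints (Frankel's theorem: two fixed-point sets of large enough dimension in a positively curved manifold must intersect), forcing the $3$-Sylow subgroup of $\Gamma$ to be cyclic of order dividing $9$. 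Combined with the $T^3$-action giving an extra commuting circle — hence an extra dimension of fixed-point reduction, killing the $27$ and one factor of $3$ — this yields index at most $3$. The same Frankel-intersection mechanism handles the prime $2$: an involution in $\Gamma$ not central would give two disjoint codimension-bounded fixed sets, contradiction, so the $2p$ condition holds and the $2$-part contributes at most the factor visible in $18=2\cdot 3^2$.

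**Main obstacle.**

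The hard part will be the precise determination of the constants rather than the qualitative reduction. Establishing \emph{nonemptiness} of the right fixed-point sets in the non-simply-connected quotient, and then controlling the index of the component-stabilizer subgroup $\Gamma_0$ in $\Gamma$ uniformly, requires care: one must rule out that $\Gamma$ permutes many components of a fixed set, which is where the Bazaikin cohomology (bounding the total Betti number, hence the number of components of any fixed set via the Euler characteristic / Conner-type inequality) is essential. After that, matching the list of candidate finite groups against the geometric realizability — i.e., showing exactly the exponents $2^1$, $3^2$ (and the exceptional $3^3$) occur, and no others — is a finite but delicate case analysis over the linear space form groups on $S^5$ together with the constraints imposed by positive curvature with a torus of the given rank.
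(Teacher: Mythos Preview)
Your overall architecture---lift to $\tilde M$, find a circle with nonempty fixed set, pass to the component stabilizer $\Gamma_0\subseteq\Gamma$, and analyze the free $\Gamma_0$-action on a lower-dimensional positively curved piece---matches the paper's. But two of the load-bearing steps you sketch are genuinely wrong, and a third is missing entirely.

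\textbf{The free action is free.} In your ``mod-$p$ refinements'' paragraph you propose to study fixed sets of $\Z/3$-subgroups of $\Gamma$ in $\tilde M$, and similarly fixed sets of involutions in $\Gamma$, and then apply Frankel. But $\Gamma=\pi_1(M)$ acts \emph{freely} on $\tilde M$; its nontrivial elements have no fixed points. Smith theory and Frankel intersections are being applied in the paper to fixed sets of subgroups of the \emph{torus}, not of $\Gamma$. The paper's mod $3$ improvement is entirely different: one shows the $S^1$-fixed component is not just a rational but a mod $3$ cohomology $\s^5$ or $\s^2\times\s^3$, and then runs a Heller-type Serre spectral sequence argument for the Borel fibration $N\to N_{\Z_3^2}\to B\Z_3^2$ to show $\Z_3\times\Z_3$ cannot act freely there.

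\textbf{Rational spheres are not spheres.} Your plan to extract the constants $18$ and $27$ from ``the list of linear space form groups acting on $S^5$'' does not work: the fixed-point component $N_0$ is only a \emph{rational} cohomology $5$-sphere, and the space-form classification gives no traction on free actions on such manifolds. (Indeed, as the paper notes, $\Z_p\times\Z_p$ can act freely on a rational Bazaikin space for every odd $p$ if one drops curvature and symmetry.) The paper's replacement is a new Lefschetz-number obstruction: if $H=\langle\alpha\rangle\trianglelefteq\Gamma$ is a normal cyclic subgroup not strictly contained in a larger cyclic one, then $|\Gamma/H|$ divides the Lefschetz number ${\rm Lef}(\alpha;\bar N,N^{S^1_1})$, computed via the Smith--Gysin sequence. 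When $N$ is a rational $\s^5$ with a free $S^1_1$-action this number is $3$, which is precisely what forces $\Z_p\times\Z_p\not\subseteq\Gamma'$ for $p\neq 3$ and $d\mid 3$ in the Burnside presentation. The bound $9$ (and the exceptional $27$) then comes from a separate structural analysis of the $3$-Sylow (it must be cyclic, $\Z_3\times\Z_3$, or $\Z_9\rtimes\Z_3$) together with a normal $3$-complement argument. None of this is visible in your sketch.

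\textbf{Minor point.} A rational cohomology Bazaikin space has the rational cohomology of $\C\pp^2\times\s^9$: the generator is in degree $2$, not $4$, and the nonzero Betti numbers sit in degrees $0,2,4,9,11,13$. This matters for the Borel/Conner bookkeeping (total Betti number $6$, at most three fixed-point components, etc.).
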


\begin{nonumberremark}
Note that $\pi_1(M)$ is cyclic in Theorem \ref{thm:index 18} under the stronger assumption of $T^4$ symmetry 
by a result of Frank-Rong-Wang (see \cite{FrankRongWang}). 
\end{nonumberremark}

\begin{nonumberremark}
Theorem \ref{thm:index 18} implies $\pi_1(M)$ satisfies the $(p^2)$ condition for all $p\geq 5$.
Davis has commented to the author that the results in \cite{DavisMilgram} imply that, for all odd primes $p$, 
there exists a closed, simply connected, smooth manifold with the rational cohomology of a Bazaikin space that admits 
a free action by ${\mathbb{Z}}_p\times{\mathbb{Z}}_p$. Therefore, one cannot prove Theorem \ref{thm:index 18}
after dropping both the curvature and symmetry assumptions.
\end{nonumberremark}

We now discuss a corollary to Theorem \ref{thm:index 18}. 
The only simply connected, closed $13$-dimensional manifolds known to admit positive curvature are ${\mathbb{S}}^{13}$ 
and the Bazaikin spaces (see \cite{Ziller2007}). By a result of Kennard (see \cite[Corollary 6.3]{Kennard}), 
if $M^{13}$ is a closed, positively curved Riemannian manifold with $T^2$ symmetry whose universal cover is a rational sphere, then $\pi_1(M)$ is cyclic. Combining this result with Theorem \ref{thm:index 18}, we get the following corollary:

\begin{nonumbercorollary}
If $M^{13}$ is a closed, positively curved Riemannian manifold with $T^2$ symmetry, and if the universal cover 
of $M$ has the cohomology of one of the known positively curved $13$-dimensional examples, then $\pi_1(M)$ 
has a cyclic subgroup of index $1, 2, 3, 6$, or $9$.
\end{nonumbercorollary}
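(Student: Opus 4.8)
The plan is to split into cases according to which of the two known positively curved $13$-dimensional examples the universal cover $\widetilde M$ of $M$ models cohomologically, and in each case to quote an already available theorem. Since the only simply connected closed $13$-manifolds currently known to carry positive curvature are $\mathbb{S}^{13}$ and the Bazaikin spaces, the hypothesis of the corollary splits into exactly two subcases: either $\widetilde M$ has the cohomology of $\mathbb{S}^{13}$, or $\widetilde M$ has the cohomology of some Bazaikin space $B^{13}$.

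In the first subcase $\widetilde M$ is, in particular, a rational cohomology $13$-sphere, so Kennard's result \cite[Corollary 6.3]{Kennard} — which asserts that a closed positively curved $13$-manifold with $T^2$ symmetry whose universal cover is a rational sphere has cyclic fundamental group — applies to $M$ directly and yields that $\pi_1(M)$ is cyclic. The index is then $1$, which is on the list.

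In the second subcase I would first observe that, since $B^{13}$ is a closed smooth manifold, the isomorphism $H^*(\widetilde M;\mathbb{Z})\cong H^*(B^{13};\mathbb{Z})$ of graded rings induces, by naturality of the universal coefficient theorem, isomorphisms $H^*(\widetilde M;\mathbb{Q})\cong H^*(B^{13};\mathbb{Q})$ and $H^*(\widetilde M;\mathbb{Z}/3)\cong H^*(B^{13};\mathbb{Z}/3)$. Since $B^{13}$ is a genuine Bazaikin space it trivially meets the conditions defining a rational cohomology Bazaikin space and a mod $3$ cohomology Bazaikin space, and these conditions therefore pass to $\widetilde M$. Both assertions of \thm{thm:index 18}(1) then apply: $\pi_1(M)$ has a cyclic subgroup of index $D$ with $D=27$ or $D\mid 18$, and in addition $D\le 9$. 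Intersecting $\{27\}\cup\{1,2,3,6,9,18\}$ with $\{D : D\le 9\}$ leaves $D\in\{1,2,3,6,9\}$, which is exactly the claimed list, completing the argument.

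The corollary is thus a direct combination of \thm{thm:index 18} with \cite[Corollary 6.3]{Kennard}, so I expect no serious obstacle beyond the bookkeeping above; the one point deserving a sentence of care is confirming that an actual Bazaikin space satisfies the paper's definition of a mod $3$ cohomology Bazaikin space — equivalently, that the integral cohomology computation for Bazaikin spaces in \cite{FloritZiller} contains no mod-$3$ anomaly — which is immediate once that computation is recalled.
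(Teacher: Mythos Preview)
Your proposal is correct and matches the paper's approach exactly: the paper also derives the corollary by splitting into the sphere case (handled by Kennard's \cite[Corollary 6.3]{Kennard}) and the Bazaikin case (handled by Theorem~\ref{thm:index 18}), with the mod~$3$ refinement eliminating $D=18$ and $D=27$. Your added remark that the integral cohomology isomorphism passes to $\mathbb{Q}$- and $\mathbb{Z}/3$-coefficients via the universal coefficient theorem, so that $\widetilde M$ is automatically both a rational and a mod~$3$ cohomology Bazaikin space (cf.\ Proposition~\ref{prop:cohomology of Bazaikin spaces}), is exactly the bookkeeping the paper leaves implicit.
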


We now discuss the tools used in the proof. In addition to the well known results such as Berger's fixed point theorem 
and Wilking's connectedness lemma (see Section \ref{sec:preliminaries}), the main new tool is Lemma \ref{lem:Z_p}.
This is a structural result for groups acting freely on positively curved manifolds with circle symmetry that generalizes an obstruction from Kennard (see \cite[Proposition 5.1]{Kennard}). Together with a result of Davis and Weinberger 
(see Theorem \ref{thm:Davis}), Lemma \ref{lem:Z_p} places strong restrictions on the Sylow subgroups of the fundamental group. 
In fact, we show that after possibly passing to a subgroup $B$ of index $2, 3, 6$, or $9$, every Sylow subgroup is cyclic. 
Burnside's classification (see Section \ref{sec:preliminaries}), together with Lemma \ref{lem:Z_p}, then implies that $B$
itself has a cyclic subgroup of index $1$ or $3$. In addition, results from equivariant cohomology 
are applied to calculate the fixed point components of the circle action. The key here is that we fix the rational type 
of the universal cover. Finally, to analyze the case in which the manifold is a mod $3$ cohomology Bazaikin space, 
we modify an argument due to Heller to further restrict the Sylow $3$-subgroup of the fundamental group 
(see Section \ref{sec:Z_3}).

This article is organized as follows. Section \ref{sec:preliminaries} provides basic results which will be used 
throughout the paper. Section \ref{sec:Bazaikin} states the definition of Bazaikin spaces as well as a lemma about groups 
acting freely and isometrically on a rational cohomology Bazaikin space. In Section \ref{sec:obstruction}, we prove Lemma \ref{lem:Z_p}. Theorem \ref{thm:index 18} in the case of rational cohomology Bazaikin spaces is proved in Section \ref{sec:Q}. 
Finally, in Section \ref{sec:Z_3}, we complete the proof of Theorem \ref{thm:index 18} by considering the case of mod $3$ cohomolgy Bazaikin spaces. 
\vspace{0.3cm}\\
\noindent{\textbf{Acknowledgements}}
This paper is part of the author's Ph.D. thesis. The author would like to thank her advisor, Lee Kennard, for his support
and helpful comments and suggestions.

\section{Preliminaries}\label{sec:preliminaries}

This section consists of three parts. The first part states some results about positively curved manifolds. In the second part,
we provide a theorem from equivariant cohomology. The last part discusses some tools from group theory.

One of the most powerful results in the theory of positively curved manifolds is the following theorem due to Wilking 
(see \cite[Theorem 2.1]{Wilking2003}):

\begin{theorem}[Connectedness lemma]\label{thm:connectedness lemma}
Let $M^n$ be a closed positively curved Riemannian manifold. If $N^{n-k}$ is a closed totally geodesic submanifold of $M$, 
then the inclusion $N^{n-k}\embedded M^n$ is $(n-2k+1)$-connected. Moreover, if $N^{n-k}$ is fixed pointwise 
by the action of a Lie group $G$ which acts isometrically on $M$, then the inclusion is $(n-2k+1+\delta(G))$-connected, 
where $\delta(G)$ is the dimension of the principal orbit.
\end{theorem}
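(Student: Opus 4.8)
The plan is to deduce the statement from Morse theory on a path space, the only input from positive curvature being a second-variation estimate that uses the totally geodesic hypothesis; I will carry out the first (non-equivariant) assertion and then indicate where the extra $\delta(G)$ enters. After rescaling the metric I may assume $\sec_{M}\ge 1$, which preserves both $N$ and the $G$-action. Let $\Lambda=\{c\in H^{1}([0,1],M)\st c(0),c(1)\in N\}$ and view $N\embedded\Lambda$ as the constant paths. Up to homotopy $\Lambda$ is the homotopy pullback of the two inclusions $N\to M\leftarrow N$, so evaluation at $0$ presents $\Lambda\to N$ as a fibration whose fiber is the homotopy fiber of $N\embedded M$; chasing the long exact sequences yields $\pi_{j}(\Lambda,N)\cong\pi_{j+1}(M,N)$ for all $j\ge 0$. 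Thus it is enough to show that, up to homotopy, $\Lambda$ is built from $N$ by attaching cells of dimension $\ge n-2k+1$ (respectively $\ge n-2k+1+\delta(G)$), since this forces $\pi_{j+1}(M,N)=\pi_{j}(\Lambda,N)=0$ for $j\le n-2k$ (resp.\ $j\le n-2k+\delta(G)$), which is the asserted connectivity.

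Now apply Morse--Bott theory to the energy $E(c)=\tfrac12\int_{0}^{1}|\dot c|^{2}$ on $\Lambda$, which satisfies the Palais--Smale condition. Its critical points are exactly the geodesics meeting $N$ orthogonally at both endpoints, and the absolute minimum $E^{-1}(0)$ is the set of constant paths, i.e.\ $N$ itself, a nondegenerate critical submanifold. By the handle description, $\Lambda$ is homotopy equivalent to $N$ with a handle of rank $\operatorname{ind}(C)$ attached along each higher critical submanifold $C$, and a rank-$r$ handle contributes relative cells of dimension $\ge r$; so it remains only to estimate the index of an arbitrary nonconstant geodesic $\gamma\colon[0,L]\to M$, parametrized by arclength, that meets $N$ orthogonally at $\gamma(0)$ and $\gamma(L)$.

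For this, note that since $N$ is totally geodesic its shape operators vanish, so on admissible fields (those with $V(0)\in T_{\gamma(0)}N$ and $V(L)\in T_{\gamma(L)}N$) the index form is simply $I(V,V)=\int_{0}^{L}\bigl(|V'|^{2}-\inner{R(V,\dot\gamma)\dot\gamma,V}\bigr)\,dt$. Consider the space $P$ of parallel fields $E$ along $\gamma$ with $E(0)\in T_{\gamma(0)}N$ and $E(L)\in T_{\gamma(L)}N$. Any such $E$ is automatically orthogonal to $\dot\gamma$, and since $T_{\gamma(0)}N$ and the parallel translate of $T_{\gamma(L)}N$ to $\gamma(0)$ both lie in the $(n-1)$-dimensional space $\dot\gamma(0)^{\perp}$ we get $\dim P\ge(n-k)+(n-k)-(n-1)=n-2k+1$. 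For $E\in P$ we have $E'=0$ and $\inner{R(E,\dot\gamma)\dot\gamma,E}=\sec(E,\dot\gamma)\,|E|^{2}\ge|E|^{2}$, hence $I(E,E)\le-\int_{0}^{L}|E|^{2}\,dt=-L\,|E(0)|^{2}<0$ for $E\ne 0$; being an inequality of quadratic forms this shows $I$ is negative definite on $P$, so $\operatorname{ind}(\gamma)\ge\dim P\ge n-2k+1$. This completes the non-equivariant case.

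For the equivariant statement, $G$ acts on $\Lambda$ pointwise, fixing $N$, and every higher critical submanifold contains a $G$-orbit $G\cdot\gamma$ of dimension $\ge\delta(G)$ once $\gamma$ meets a principal orbit. Its tangent directions are the restrictions $X\circ\gamma$ of the Killing fields $X\in\mathfrak g$, which are Jacobi fields vanishing at $\gamma(0)$ and $\gamma(L)$ since those are fixed points; using the Killing equation together with total geodesy one checks that $I(X\circ\gamma,X\circ\gamma)=0$ and $I(X\circ\gamma,E)=0$ for every $E\in P$, so these fields by themselves only enlarge the nullity and not the index. The remaining task is to manufacture, in addition to $P$, a further $\delta(G)$-dimensional subspace on which $I$ is negative definite out of the orbit geometry --- equivalently, to run the same argument on a path space transverse to the $G$-orbits (paths landing in a principal orbit, or in a slice), so that the orbit freedom is traded for genuine second-variation decrease while the isomorphism $\pi_{j}(\Lambda,N)\cong\pi_{j+1}(M,N)$ is kept with the right shift. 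I expect this to be the main obstacle: the non-equivariant part is a clean second-variation computation, but extracting the extra $\delta(G)$ requires genuinely using the interaction, in positive curvature, between the curvature operator and the infinitesimal isometries along $\gamma$, which is the delicate core of Wilking's argument.
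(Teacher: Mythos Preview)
The paper does not prove this theorem; it is stated in Section~\ref{sec:preliminaries} as a preliminary result with a citation to Wilking \cite{Wilking2003}. So there is no argument in this paper to compare yours against.

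On the merits of your write-up: the non-equivariant part is correct and is essentially Wilking's original proof. The reduction via $\pi_j(\Lambda,N)\cong\pi_{j+1}(M,N)$ is right, and the parallel-field computation is the heart of the matter --- the ``$+1$'' comes precisely from your observation that both $T_{\gamma(0)}N$ and the parallel translate of $T_{\gamma(L)}N$ lie in the hyperplane $\dot\gamma(0)^{\perp}$. One minor caveat: the energy functional on $\Lambda$ need not be Morse--Bott, so the handle description is not literally available in general; but the index lower bound for every nonconstant critical point is all you need, via the standard min-max or finite-dimensional-approximation arguments, to conclude that $N\hookrightarrow\Lambda$ is $(n-2k)$-connected and hence $N\hookrightarrow M$ is $(n-2k+1)$-connected.

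For the equivariant assertion you are candid that the proof is incomplete, and your diagnosis is accurate: the Killing--Jacobi fields $X\circ\gamma$ lie in the null space of $I$ and therefore do not raise the index by themselves, nor does the mere presence of a $G$-orbit in each critical level help, since in Morse--Bott theory the minimal cell dimension attached is the index, not the index plus the dimension of the critical manifold. Producing the additional $\delta(G)$ negative directions is exactly the substance of Wilking's equivariant improvement, and it is not a formality. Your suggestion of passing to a transverse slice is a reasonable heuristic --- informally it replaces $M$ by an $(n-\delta(G))$-dimensional quotient in which $N$ has codimension $k-\delta(G)$, and the non-equivariant bound then yields $(n-\delta(G))-2(k-\delta(G))+1=n-2k+1+\delta(G)$ --- but as written it is only a sketch. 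So the second clause remains a genuine gap in your proposal.
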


The following is a result of Poincar\'e duality and refines the conclusion of Theorem \ref{thm:connectedness lemma}:

\begin{lemma}[{\cite[Lemma 2.2]{Wilking2003}}]\label{lem:periodicity}
Let $M^n$ be a closed orientable smooth manifold and let $N^{n-k}$ be a closed orientable submanifold.
If the inclusion $N^{n-k}\embedded M^n$ is $(n-k-l)$-connected and $n-k-2l>0$, then there exists 
$e\in H^k(M;\mathbb{Z})$ such that the map $\cup e:H^i(M;\mathbb{Z})\to H^{i+k}(M;\mathbb{Z})$
is surjective for $l\leq i< n-k-l$ and injective for $l<i\leq n-k-l$.
\end{lemma}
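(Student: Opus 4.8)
The plan is to realize $\cup e$ as a composition of a restriction map, two canonical isomorphisms, and a map that Poincar\'e duality identifies with an inclusion-induced map on homology, and then to read off the surjectivity and injectivity ranges directly from the connectivity hypothesis. To set up: write $i\colon N\embedded M$ for the inclusion and $\pi\colon\nu\to N$ for the normal bundle of $N$ in $M$, a rank-$k$ vector bundle. From $w_1(TM|_N)=w_1(TN)+w_1(\nu)$ together with the orientability of $M$ and $N$ we get $w_1(\nu)=0$, so $\nu$ is orientable; fixing an orientation yields a Thom class $u_\nu\in H^k(D\nu,S\nu;\Z)$ and Thom isomorphisms $\Phi\colon H^i(N;\Z)\to H^{i+k}(D\nu,S\nu;\Z)$, $a\mapsto\pi^*a\cup u_\nu$. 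Identifying a closed tubular neighborhood of $N$ with the disk bundle $D\nu$ and excising gives $H^*(M,M\setminus N;\Z)\cong H^*(D\nu,S\nu;\Z)$; let $u\in H^k(M,M\setminus N;\Z)$ correspond to $u_\nu$, and let $e\in H^k(M;\Z)$ be the image of $u$ under $j^*\colon H^k(M,M\setminus N;\Z)\to H^k(M;\Z)$.

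The first step is to factor $\cup e$. For $x\in H^i(M;\Z)$, naturality of the cup product for the pair map $(M,\emptyset)\to(M,M\setminus N)$ gives $x\cup e=j^*(x\cup u)$. Under the excision isomorphism $x\cup u$ corresponds to $(x|_{D\nu})\cup u_\nu$, and since $\pi$ is a homotopy equivalence restricting to $i$ on the zero section we have $x|_{D\nu}=\pi^*(i^*x)$, so $x\cup u$ corresponds to $\pi^*(i^*x)\cup u_\nu=\Phi(i^*x)$. Hence $\cup e$ is the composite
$$
H^i(M;\Z)\xrightarrow{\ i^*\ }H^i(N;\Z)\xrightarrow{\ \Phi\ }H^{i+k}(D\nu,S\nu;\Z)\xrightarrow{\ \cong\ }H^{i+k}(M,M\setminus N;\Z)\xrightarrow{\ j^*\ }H^{i+k}(M;\Z),
$$
in which the middle two arrows are isomorphisms. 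Moreover, combining the Thom isomorphism with Poincar\'e duality on $N$ and on $M$ identifies $j^*$ with the inclusion-induced map $i_*\colon H_{n-k-i}(N;\Z)\to H_{n-k-i}(M;\Z)$; concretely this is the standard identity $(\alpha\cup e)\cap[M]=i_*\big((i^*\alpha)\cap[N]\big)$, a consequence of $e\cap[M]=i_*[N]$ and the projection formula.

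The second step is to feed in the connectivity hypothesis. Since $N\embedded M$ is $(n-k-l)$-connected, the relative Hurewicz theorem and universal coefficients give $H_j(M,N;\Z)=H^j(M,N;\Z)=0$ for $j\le n-k-l$. The long exact sequence of the pair $(M,N)$ in cohomology then shows that $i^*\colon H^i(M;\Z)\to H^i(N;\Z)$ is an isomorphism for $i<n-k-l$ and injective for $i=n-k-l$, and the long exact sequence in homology shows that $i_*\colon H_j(N;\Z)\to H_j(M;\Z)$ is an isomorphism for $j<n-k-l$ and surjective for $j=n-k-l$. Taking $j=n-k-i$ in the latter, $j^*$ is surjective for $i\ge l$ and injective for $i>l$. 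Composing these facts with the isomorphisms in the middle of the factorization, $\cup e$ is surjective whenever $l\le i<n-k-l$ and injective whenever $l<i\le n-k-l$; the hypothesis $n-k-2l>0$ is precisely the condition that makes both of these ranges nonempty.

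The step I expect to be the main obstacle is the identification of $j^*$ with $i_*$ under the duality isomorphisms. This requires choosing compatible orientations for the fundamental classes $[M]$ and $[N]$, the Thom class $u_\nu$, and hence $e$, and invoking the correct form of Lefschetz duality for the (open) manifold pair $(M,M\setminus N)$, so that the projection formula can be applied with the right signs. Once that identification is established, the rest of the argument is routine diagram chasing in the two long exact sequences of $(M,N)$.
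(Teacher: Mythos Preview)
Your argument is correct and is essentially the standard proof of this lemma as given in Wilking's original paper \cite{Wilking2003}; the paper under review does not supply its own proof but simply cites Wilking. The factorization of $\cup e$ through the Thom isomorphism and the identification of the Gysin map with $i_*$ via Poincar\'e duality and the projection formula is exactly the intended approach, and your reading of the injectivity and surjectivity ranges from the connectivity hypothesis is accurate.
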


The next result is due to Frank, Rong, and Wang (see \cite[Corollaries 1.7 and 1.9]{FrankRongWang}).

\begin{corollary}\label{cor:codimension 2}
Let $M^n$ be a closed odd-dimensional Riemannian manifold with positive sectional curvature. If $n\geq 5$ and $M$ has a closed 
totally geodesic submanifold $N$ of codimension two, then the universal covering spaces of $M$ and $N$ are homotopy spheres,
and $\pi_1(M)\cong\pi_1(N)$ is cyclic.
\end{corollary}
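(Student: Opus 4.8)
The plan is to run the connectedness lemma, upgrade to universal covers, apply the periodicity lemma to identify $\widetilde M$ (and hence $\widetilde N$) as a homotopy sphere, and finish with Swan's theorem on groups of periodic cohomology. First I would apply \thm{thm:connectedness lemma} with $k=2$: the inclusion $N^{n-2}\embedded M^n$ is $(n-3)$-connected. Since $n\geq 5$ gives $n-3\geq 2$, the induced map $\pi_1(N)\to\pi_1(M)$ is an isomorphism; so, writing $\widetilde M\to M$ for the universal cover and $\widetilde N$ for the preimage of $N$, the submanifold $\widetilde N$ is the universal cover of $N$, and $\widetilde N\embedded\widetilde M$ is still $(n-3)$-connected. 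In particular $\pi_1(M)\cong\pi_1(N)$, which is one assertion of the statement.

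Next I would show $\widetilde M$ is a homotopy $n$-sphere. It is simply connected, hence orientable, and so is $\widetilde N$; thus \lem{lem:periodicity} applies with $k=2$ and $l=1$ (legitimate since $n-k-2l=n-4>0$), producing $e\in H^2(\widetilde M;\Z)$ for which $\cup e$ is surjective in degrees $1\leq i<n-3$ and injective in degrees $1<i\leq n-3$. The same conclusion holds with coefficients in any field over which $\widetilde M$ is orientable — in particular in $\Q$ and in each prime field $\Z_p$ — since the proof of \lem{lem:periodicity} is pure Poincar\'e duality. As $\widetilde M$ is simply connected, $H^1(\widetilde M;F)=0$, so climbing the periodicity ladder kills the cohomology in all odd degrees $1,3,\dots,n-2$; Poincar\'e duality over $F$ then forces the even degrees $2,4,\dots,n-1$ to vanish too. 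Hence $\widetilde M$ is an $F$-homology $n$-sphere for $F=\Q$ and for every $\Z_p$, and, being simply connected, an integral homology $n$-sphere — so, by Hurewicz and Whitehead, a homotopy $n$-sphere.

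Now $\widetilde N\embedded\widetilde M\simeq S^n$ is $(n-3)$-connected, so $\widetilde N$ is $(n-4)$-connected, and as a closed simply connected $(n-2)$-manifold it is then a homotopy $(n-2)$-sphere by Poincar\'e duality and Hurewicz. It remains to show $G:=\pi_1(M)\cong\pi_1(N)$ is cyclic. The group $G$ acts freely by deck transformations on $\widetilde M\simeq S^n$ and on $\widetilde N\simeq S^{n-2}$, so by Swan's theorem it has periodic cohomology of period dividing both $n+1$ and $n-1$. Since $n$ is odd, $\gcd(n+1,n-1)=2$, so the period divides $2$; a finite group of cohomological period dividing $2$ is cyclic, completing the proof.

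The step I expect to be the main obstacle is the identification of $\widetilde M$ as a homotopy sphere: one has to run the periodicity argument over every coefficient field (equivalently, track the integral torsion through the universal coefficient theorem) rather than only rationally, since it is only after this that Poincar\'e duality closes the cohomology up into that of a sphere. The connectedness-lemma input, the duality bookkeeping for $\widetilde N$, and the invocation of Swan's theorem are otherwise routine — and it is worth stressing that the whole force of the ``cyclic'' conclusion sits in the elementary fact that $\gcd(n+1,n-1)=2$ when $n$ is odd.
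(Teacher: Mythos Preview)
The paper does not supply its own proof of this corollary; it is quoted as a result of Frank--Rong--Wang with a citation to \cite[Corollaries~1.7 and~1.9]{FrankRongWang}. Your argument is correct and is essentially the standard route to that result.

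Two minor remarks. First, the detour through field coefficients is unnecessary: Lemma~\ref{lem:periodicity} is already stated over $\Z$, and combining the surjectivity chain $H^1\to H^3\to\cdots\to H^{n-2}$ (which starts from $H^1(\widetilde M;\Z)=0$) with the injectivity chain $H^2\hookrightarrow H^4\hookrightarrow\cdots\hookrightarrow H^{n-1}\cong H_1(\widetilde M;\Z)=0$ already kills all intermediate integral cohomology, so the universal-coefficient bookkeeping you flag as the ``main obstacle'' never arises. Second, before invoking Swan's theorem you are implicitly using Bonnet--Myers to know that $\pi_1(M)$ is finite; it is worth making that explicit.
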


For our purposes, we need the following generalization of Corollary \ref{cor:codimension 2}:

\begin{proposition}\label{prop:codimension 2}
Let $M^n$ be a closed odd-dimensional Riemannian manifold with positive sectional curvature. Suppose that $n\geq 5$ 
and $M$ has a closed totally geodesic submanifold $N$ of codimension two. If $\Gamma$ is a finite group that acts freely
on both $M$ and $N$, then $\Gamma$ is cyclic. 
\end{proposition}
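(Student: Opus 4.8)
The plan is to reduce the statement to a purely topological fact about finite groups with periodic cohomology, using Corollary~\ref{cor:codimension 2} to control the universal covers. Let $p\colon\tilde M\to M$ be the universal covering. By Theorem~\ref{thm:connectedness lemma} applied with $k=2$, the inclusion $N\embedded M$ is $(n-3)$-connected, and since $n\geq 5$ this is at least $2$-connected; in particular $\pi_1(N)\to\pi_1(M)$ is an isomorphism. Consequently $p^{-1}(N)$ is connected and $p|_{p^{-1}(N)}\colon p^{-1}(N)\to N$ is the universal covering of $N$, so we may set $\tilde N:=p^{-1}(N)$, a codimension-two submanifold of $\tilde M$. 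By Corollary~\ref{cor:codimension 2}, $\tilde M$ is a homotopy $n$-sphere, $\tilde N$ is a homotopy $(n-2)$-sphere, and $\pi_1(M)\cong\pi_1(N)$ is finite cyclic; in particular the curvature hypothesis is used only here, and everything that follows is topological.

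Next I would lift the group action. We may assume $\Gamma$ acts freely on $M$ leaving $N$ invariant, so the restricted action on $N$ is automatically free. Let $\hat\Gamma$ be the group of all lifts to $\tilde M$ of elements of $\Gamma$; it fits in an extension $1\to\pi_1(M)\to\hat\Gamma\to\Gamma\to 1$ and is therefore finite. If $g\in\hat\Gamma$ fixes a point of $\tilde M$, then its image $\bar g\in\Gamma$ fixes a point of $M$, hence $\bar g=1$, so $g$ is a deck transformation with a fixed point, whence $g=1$; thus $\hat\Gamma$ acts freely on $\tilde M$. Moreover, since $p\circ g=\bar g\circ p$ and $N$ is $\Gamma$-invariant, one checks $g(\tilde N)=p^{-1}(\bar g(N))=\tilde N$ for every $g\in\hat\Gamma$, so $\hat\Gamma$ preserves $\tilde N$ and acts freely on it as well.

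Finally I would invoke the classical theory of periodic cohomology. A finite group acting freely on a finite CW complex homotopy equivalent to $S^k$ has periodic group cohomology with period dividing $k+1$: the Borel construction $S^k\to E\hat\Gamma\times_{\hat\Gamma}\tilde M\to B\hat\Gamma$ is, up to homotopy, the finite-dimensional manifold $\tilde M/\hat\Gamma$, and its Gysin sequence forces $\cup e\colon H^i(\hat\Gamma;\Z)\to H^{i+k+1}(\hat\Gamma;\Z)$ to be an isomorphism in high degrees, hence for all $i$ after passing to Tate cohomology. Applying this to the free $\hat\Gamma$-actions on $\tilde M\simeq S^n$ and on $\tilde N\simeq S^{n-2}$, the period of $H^*(\hat\Gamma;\Z)$ divides both $n+1$ and $n-1$, hence divides $\gcd(n+1,n-1)=2$ since $n$ is odd. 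A finite group whose cohomological period divides $2$ is cyclic: if the period is $1$ then $\Z/|\hat\Gamma|\cong\hat H^0(\hat\Gamma;\Z)\cong\hat H^1(\hat\Gamma;\Z)=0$ forces $\hat\Gamma$ trivial, and if the period is $2$ then $\hat\Gamma^{\mathrm{ab}}=\hat H^{-2}(\hat\Gamma;\Z)\cong\hat H^0(\hat\Gamma;\Z)=\Z/|\hat\Gamma|$ forces $\hat\Gamma$ abelian and cyclic. Since $\Gamma$ is a quotient of the cyclic group $\hat\Gamma$, it is cyclic.

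The step requiring the most care is the passage between $M$ and $N$ at the level of universal covers: it is essential that $N\embedded M$ be $\pi_1$-surjective (ensured by the connectedness lemma together with $n\geq 5$) so that $\tilde N$ is genuinely the universal cover of $N$ and embeds in $\tilde M$ as a $\hat\Gamma$-invariant submanifold. This is precisely what makes the \emph{single} group $\hat\Gamma$ act freely on both homotopy spheres at once, which is the crux of the period computation; without it one would only get two a priori unrelated extensions of $\Gamma$.
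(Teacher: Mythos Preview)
Your proof is correct but takes a different route from the paper's. The paper argues in the opposite direction: rather than lifting to the universal cover, it passes to the \emph{quotient} $M/\Gamma$. Since $\Gamma$ acts freely (and, in the paper's applications, isometrically) on $M$ and preserves $N$, the quotient $N/\Gamma$ is a closed totally geodesic codimension-two submanifold of the positively curved manifold $M/\Gamma$, so Corollary~\ref{cor:codimension 2} applies directly to $M/\Gamma$ and gives that $\pi_1(M/\Gamma)$ is cyclic; then $\Gamma \cong \pi_1(M/\Gamma)/q_*(\pi_1(M))$ is a quotient of a cyclic group, hence cyclic. That is essentially the entire argument.

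Your approach instead applies Corollary~\ref{cor:codimension 2} to $M$ itself, lifts the $\Gamma$-action to $\tilde M$ to obtain the extension $\hat\Gamma$, and invokes the theory of periodic group cohomology to conclude that $\hat\Gamma$ (and hence its quotient $\Gamma$) is cyclic. This is considerably longer, but it has a genuine conceptual advantage: you never need $M/\Gamma$ to inherit a Riemannian structure, so your argument works for an arbitrary free action of $\Gamma$ by homeomorphisms preserving $N$, whereas the paper's proof implicitly uses that $\Gamma$ acts by isometries so that $M/\Gamma$ is again positively curved. In the paper's applications this distinction is moot, since the groups in question always act by deck transformations on a Riemannian cover.
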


\begin{proof}
Consider the Riemannian covering map $q:M\to {M/{\Gamma}}$. By \cite[Proposition 1.40]{Hatcher}, we have
$\Gamma\cong{\pi_1(M/{\Gamma})}/{q_*(\pi_1(M))}$. In addition, $M/{\Gamma}$ and $N/{\Gamma}$ 
are closed odd-dimensional positively curved manifolds and $N/{\Gamma}$ is a totally geodesic submanifold 
of $M/{\Gamma}$ of codimension two. Hence $\pi_1(M/{\Gamma})$ is cyclic by Corollary \ref{cor:codimension 2}. 
This implies that $\Gamma\cong{\pi_1(M/{\Gamma})}/{q_*(\pi_1(M))}$ is cyclic.
\end{proof}

We end the first part of this section with a generalization of Berger's theorem about torus actions on positively curved Riemannian manifolds of even dimension (see \cite{Berger}). The statement in odd dimensions is due to Sugahara \cite{Sugahara}
(cf. \cite{GroveSearle}).

\begin{theorem}\label{thm:Berger}
Let $M$ be a closed, odd-dimensional Riemannian manifold with positive sectional curvature. 
If $M$ admits an effective isometric $T^k$-action, then there is a circle orbit. In particular, there exists $T^{k-1}\subseteq T^k$ with non-empty fixed point set.
\end{theorem}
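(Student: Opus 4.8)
The plan is to argue by induction on $k$. For $k=1$ there is nothing to prove: an effective $S^1$-action is nontrivial, hence has an orbit of dimension one, which is a circle. So suppose $k\ge 2$ and that the statement holds for all effective isometric torus actions of rank strictly less than $k$ on closed, odd-dimensional, positively curved manifolds. The first observation is that a fixed point already forces a circle orbit. Indeed, if $p\in M^{T^k}$, then the slice representation $T^k\to\Or(T_pM)$ is faithful, since an element acting trivially on $T_pM$ fixes a neighborhood of $p$ by the slice theorem and hence acts trivially on $M$ by connectedness. Decomposing $T_pM$ into two-dimensional weight spaces and a trivial summand, faithfulness forces a nonzero weight $\alpha\colon T^k\to S^1$; such a homomorphism is automatically onto, so $\ker\alpha$ has dimension $k-1$. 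For a sufficiently short nonzero vector $v$ in the $\alpha$-weight plane, the slice theorem identifies the isotropy group at $\exp_p(v)$ with the linear isotropy of $v$, which is exactly $\ker\alpha$; thus $T^k\cdot\exp_p(v)$ is a circle orbit, and we are done in this case.

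From now on assume $M^{T^k}=\emptyset$. I claim that some circle subgroup $C\subseteq T^k$ satisfies $M^C\neq\emptyset$. If not, then every point of $M$ has finite isotropy group (a positive-dimensional one would contain a circle fixing that point), so every orbit is $k$-dimensional; in particular any subtorus $T^2\subseteq T^k$ acts locally freely. Pick a circle $C\subseteq T^2$ and pass to the quotient Riemannian orbifold $M/C$, which has even dimension $2m$ and, by O'Neill's formula for orbifold submersions, positive sectional curvature. The group $T^2/C\cong S^1$ acts on $M/C$ isometrically and nontrivially, since the $T^2$-orbits in $M$ are two-dimensional while the $C$-orbits are one-dimensional. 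Berger's theorem — whose proof by minimizing the squared norm of a Killing field applies on Riemannian orbifolds after lifting to a uniformizing chart — then produces a point $\bar p\in M/C$ fixed by $T^2/C$. Its preimage is a circle preserved by $T^2$, on which $T^2$ must act with positive-dimensional stabilizers, contradicting local freeness. This proves the claim.

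Now fix a circle $C$ with $M^C\neq\emptyset$ and let $N$ be a connected component of $M^C$. As the fixed set of an isometry, $M^C$ is totally geodesic, so $N$ is a closed, positively curved submanifold of even codimension, hence of odd dimension, with $\dim N<\dim M$; being connected, $T^k$ preserves $N$. Since $C$ acts trivially on $N$, the image of $T^k$ in $\Isom(N)$ is a torus of some rank $j\le k-1$, and in fact $j\ge 1$, for otherwise $N\subseteq M^{T^k}=\emptyset$. Applying the inductive hypothesis to this rank-$j$ action on $N$ gives a point $q\in N$ whose orbit under the induced torus is a circle. Tracking isotropy groups — the kernel of $T^k\to\Isom(N)$ has dimension $k-j$, and $(T^k)_q$ is precisely the preimage of the isotropy of $q$ in the rank-$j$ quotient, which has dimension $j-1$ — yields $\dim (T^k)_q=(k-j)+(j-1)=k-1$, so $T^k\cdot q$ is a circle orbit. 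Finally, the ``in particular'' statement is immediate: for any circle orbit $T^k\cdot q$, the identity component of $(T^k)_q$ is a subtorus $T^{k-1}$ fixing $q$.

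The crux is the middle paragraph, namely excluding locally free $T^2$-actions; this is the only place where positive curvature and odd dimensionality are genuinely used, via descent to an even-dimensional positively curved orbifold where Berger's theorem applies. The points requiring care are making that descent rigorous (the orbifold O'Neill formula and the orbifold version of Berger's argument) and correctly bookkeeping the isotropy subgroups when passing to the totally geodesic component $N$.
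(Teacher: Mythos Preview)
The paper does not supply its own proof of this theorem; it is stated as a known result, with attributions to Berger for the even-dimensional fixed-point theorem and to Sugahara (cf.\ Grove--Searle) for the odd-dimensional circle-orbit statement. So there is no in-paper argument to compare against.

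Your proof is correct and is essentially the standard one. The decisive step is indeed the middle paragraph: ruling out a locally free $T^2$-action by descending to the even-dimensional positively curved orbifold $M/C$ and invoking Berger's Killing-field argument there. Your caveat that the orbifold version needs care is well placed, and the fix you sketch is the right one: at a possibly singular minimum of $|X|^2$ one lifts to a uniformizing chart, where the local-minimum property and the curvature bound persist, and the usual contradiction (a skew-symmetric endomorphism of the odd-dimensional orthogonal complement $X^\perp$ must have nontrivial kernel, yielding a direction in which the Hessian is negative) goes through verbatim. The remaining ingredients --- faithfulness of the slice representation at a $T^k$-fixed point, the induction via a totally geodesic component of $M^C$, and the isotropy dimension count $\dim (T^k)_q = (k-j)+(j-1) = k-1$ --- are all handled correctly.
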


One of the main tools used in the proof of Theorem \ref{thm:index 18} is the relationship between the cohomology 
of a manifold $M$ and that of the fixed point set $M^{S^1}$ of a circle acting on $M$. 
For our purposes, we need the following result. It is proved by applying tools from equivariant cohomology 
(see \cite[Theorem 3.8.12 and Theorem 3.10.4]{AlldayPuppe}):

\begin{theorem}\label{thm:Allday}
If $M$ is a compact manifold which admits a smooth $S^1$-action, then the rational Betti numbers satisfy
$$\sum_i b_i(M^{S^1};\mathbb{Q})\leq\sum_i b_i(M;\mathbb{Q}).$$
Moreover, if $\sum_i b_i(M^{S^1};\mathbb{Q})=\sum_i b_i(M;\mathbb{Q})$ and if $H^*(M;\mathbb{Q})$ has $r$ generators 
of even degree and $s$ generators of odd degree, then for any component $F$ of $M^{S^1}$, $H^*(F;\mathbb{Q})$ 
has at most $r$ generators of even degree and at most $s$ generators of odd degree.
\end{theorem}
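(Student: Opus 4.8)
The plan is to pass to rational Borel equivariant cohomology and use the localization theorem. Write $R=H^*(BS^1;\Q)=\Q[u]$ with $\deg u=2$, and let $H^*_{S^1}(M)=H^*\of{ES^1\times_{S^1}M;\Q}$, viewed as an $R$-module via the projection $ES^1\times_{S^1}M\to BS^1$. For the inequality I would compute the rank of $H^*_{S^1}(M)$ over the principal ideal domain $R$ in two ways. First, the Leray--Serre spectral sequence of the fibration $M\embedded ES^1\times_{S^1}M\to BS^1$ has $E_2$-page $R\tensor_{\Q}H^*(M;\Q)$, a free $R$-module of rank $\sum_i b_i(M;\Q)$; since $E_\infty$ is a subquotient of $E_2$ and $H^*_{S^1}(M)$ carries a finite filtration with associated graded $E_\infty$, the rank of $H^*_{S^1}(M)$ over $R$ is at most $\sum_i b_i(M;\Q)$. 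Second, the Borel localization theorem gives $H^*_{S^1}(M)\tensor_R R[u^{-1}]\cong H^*_{S^1}\of{M^{S^1}}\tensor_R R[u^{-1}]$, and since $S^1$ acts trivially on $M^{S^1}$ this is $H^*\of{M^{S^1};\Q}\tensor_{\Q}R[u^{-1}]$, of rank $\sum_i b_i\of{M^{S^1};\Q}$. Comparing the two computations gives $\sum_i b_i\of{M^{S^1};\Q}\leq\sum_i b_i(M;\Q)$.

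For the equality statement I would first recall the standard dichotomy: equality holds exactly when the action is \emph{equivariantly formal}, i.e.\ $H^*_{S^1}(M)$ is a free $R$-module and the spectral sequence above collapses; this is \cite[Theorem 3.8.12]{AlldayPuppe}. In that case $H^*_{S^1}(M)$ is $u$-torsion-free, so the Gysin sequence of the circle bundle $ES^1\times M\to ES^1\times_{S^1}M$ shows that the fiber restriction induces a ring isomorphism $H^*_{S^1}(M)/u\,H^*_{S^1}(M)\cong H^*(M;\Q)$ and that $H^*_{S^1}(M)\to H^*(M;\Q)$ is onto. Lifting a minimal generating set of $H^*(M;\Q)$ to homogeneous elements of $H^*_{S^1}(M)$ --- even-degree lifts for the $r$ even generators, odd-degree lifts for the $s$ odd ones --- and invoking a graded Nakayama argument (all degrees non-negative, $u$ of positive degree) shows that $H^*_{S^1}(M)$ is generated as an $R$-algebra by $r$ elements of even degree and $s$ of odd degree. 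Next, $u$-torsion-freeness makes the restriction $\rho\colon H^*_{S^1}(M)\to H^*_{S^1}\of{M^{S^1}}=H^*\of{M^{S^1};\Q}\tensor_{\Q}R$ injective, since it is an isomorphism after inverting $u$; fixing a component $F$ of $M^{S^1}$, the composite $\rho_F$ of $\rho$ with projection to the $F$-summand is surjective after inverting $u$. Pushing the $R$-algebra generators of $H^*_{S^1}(M)$ through $\rho_F$ and reducing modulo $u$ produces a system of algebra generators of $H^*(F;\Q)$ with at most $r$ classes of even degree and at most $s$ of odd degree --- this is \cite[Theorem 3.10.4]{AlldayPuppe}.

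The delicate step, and the one I would quote Allday and Puppe for rather than reprove, is the last one: showing that the images under $\rho_F$ of a minimal generating set of $H^*_{S^1}(M)$ --- together with the fact that $u$ becomes nilpotent modulo the positive-degree ideal of $H^*(F;\Q)\tensor_{\Q}R$ --- really do generate $H^*(F;\Q)$, with the stated even/odd split, even though the naive restriction $H^*(M;\Q)\to H^*(F;\Q)$ need not be surjective. This is where one uses the freeness of $H^*_{S^1}(M)$ over $R$, the connectedness of $F$ (so that $\rho_F$ is onto after inverting $u$), and the fact that forming a minimal generating set of a graded-commutative $\Q$-algebra respects the split into even- and odd-degree generators. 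After checking that our normalization of ``number of generators of even and odd degree'' coincides with that of \cite{AlldayPuppe}, the proposition follows from \cite[Theorems 3.8.12 and 3.10.4]{AlldayPuppe}.
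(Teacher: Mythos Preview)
The paper does not give its own proof of this theorem: it is stated as a quotation from Allday--Puppe, with the reference ``see \cite[Theorem 3.8.12 and Theorem 3.10.4]{AlldayPuppe}'' in the sentence preceding the statement. Your proposal is precisely an unpacking of what lies behind that citation --- the Borel localization theorem for the inequality, and the equivariant formality/Nakayama argument for the generator count --- and you end up citing the same two results for the nontrivial steps. So your approach coincides with the paper's, only with more detail filled in.
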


We end this section with some results from the theory of finite groups and free actions by finite groups.
The first result is due to Davis and Weinberger (see \cite[Theorem D]{Davis}).

\begin{theorem}\label{thm:Davis}
Let $M^{4k+1}$ be a closed manifold such that $\sum_{i=0}^{2k}(-1)^i\dim H^i(M;\mathbb{Q})$ is odd. 
If $G$ is a finite group that acts freely on $M$ such that the induced action on $H^*(M;\Q)$ is trivial, 
then $G$ is the direct product of a cyclic $2$-group and a group $\Gamma$ of odd order. 
\end{theorem}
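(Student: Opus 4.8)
The plan is to deduce the statement from a semicharacteristic obstruction --- of which Milnor's theorem on free actions on mod~$2$ homology spheres is the prototype --- after reducing it to a statement about the Sylow $2$-subgroup of $G$ that is inherited by every subquotient. Write $\chi_{1/2}(M)$ for the rational semicharacteristic $\sum_{i=0}^{2k}(-1)^{i}\dim H^{i}(M;\Q)$.

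\emph{Reduction.} The conclusion is equivalent to the assertion that a Sylow $2$-subgroup $P\leq G$ is cyclic and central: if $P$ is cyclic with $P\leq Z(G)$, then $P\trianglelefteq G$, the quotient $G/P$ has odd order, Schur--Zassenhaus supplies a complement $\Gamma$ of odd order with $G=P\Gamma$ and $P\cap\Gamma=1$, and centrality of $P$ forces $G=P\times\Gamma$; conversely, a direct factor that is a cyclic $2$-group is a central cyclic Sylow $2$-subgroup. The essential point is that the hypotheses pass to every subquotient of $G$: if $\Lambda\leq G$ and $N\trianglelefteq\Lambda$, then $\Lambda/N$ acts on $M/N$ --- freely, since the $\Lambda$-action on $M$ is free --- the space $M/N$ is again a closed $(4k+1)$-manifold, and iterated rational transfer together with triviality of the $G$-action on $H^{*}(M;\Q)$ gives $H^{*}(M/N;\Q)\cong H^{*}(M;\Q)^{N}=H^{*}(M;\Q)$; hence $\Lambda/N$ acts freely on a closed $(4k+1)$-manifold with odd semicharacteristic and trivial induced action on rational cohomology.

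\emph{The obstruction.} To such a free action of a finite group $\Lambda$ on a closed $(4k+1)$-manifold $M$ one attaches a surgery semicharacteristic, living in a suitable Grothendieck--Witt group of $\Q\Lambda$ and built from the middle-dimensional homology of the universal cover with its symmetric pairing (which is where the hypothesis $\dim M\equiv 1\pmod 4$ is used). Decomposing $\Q\Lambda$ into its simple factors and tracking ranks and Witt invariants, one finds that oddness of $\chi_{1/2}(M)$ excludes the representation-theoretic configurations in $\Lambda$ that would make the invariant ``even''. Carrying out Milnor's bookkeeping on this information --- the new input being only that $\chi_{1/2}(M)\neq 0$, rather than that $M$ is a mod~$2$ homology sphere --- shows that $\Lambda$ has \emph{at most one involution}; equivalently, $\Lambda$ contains no subgroup isomorphic to $\Z/2\times\Z/2$ or to a dihedral group $D_{2q}$ of order $2q$ with $q$ an odd prime.

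\emph{Assembly.} Apply the obstruction to every subquotient of $G$: each has at most one involution, so $G$ has no subquotient isomorphic to $\Z/2\times\Z/2$ or to $D_{2q}$ with $q$ an odd prime. In particular $\Z/2\times\Z/2$ is not a subgroup of $G$, so a Sylow $2$-subgroup $P$ has a unique involution and is thus cyclic or generalized quaternion; and since $Q_{8}$ surjects onto $\Z/2\times\Z/2$ and embeds into every $Q_{2^{n}}$, the group $\Z/2\times\Z/2$ is a subquotient of every generalized quaternion group, so $P$ cannot be generalized quaternion. Hence $P$ is cyclic. By Burnside's normal $p$-complement theorem (a finite group with cyclic Sylow $2$-subgroup has a normal $2$-complement), there is $\Gamma\trianglelefteq G$ of odd order with $G=P\ltimes\Gamma$, and it remains to show that $P$ centralizes $\Gamma$. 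If not, then, as $\gcd(|P|,|\Gamma|)=1$, there is a $P$-invariant Sylow $r$-subgroup $\Gamma_{r}$ of $\Gamma$, for some odd prime $r$, on which $P$ acts nontrivially, hence acts nontrivially on the $\mathbb{F}_{r}[P]$-module $\Gamma_{r}/\Phi(\Gamma_{r})$; choosing an irreducible summand $W$ on which $P$ acts nontrivially, the order-$2$ element of the cyclic image of $P$ in $\mathrm{GL}(W)$ acts as $-\mathrm{id}$, so for any nonzero $w\in W$ the subgroup $\langle w\rangle\rtimes\langle -\mathrm{id}\rangle\cong D_{2r}$ occurs as a subquotient of $\Gamma_{r}\rtimes P\leq G$, contradicting the obstruction. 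Hence $P\leq C_{G}(\Gamma)$, so $P\leq Z(G)$, and $G=P\times\Gamma$ with $P$ a cyclic $2$-group and $|\Gamma|$ odd.

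\textbf{Main obstacle.} Everything outside ``The obstruction'' is transfer and elementary finite group theory; the genuine difficulty is to construct the surgery semicharacteristic for a $(4k+1)$-manifold that need not be highly connected, to establish its behaviour under the coverings used above, and to carry out the computation over $\Q\Lambda$ that converts the hypothesis $\chi_{1/2}(M)\neq 0$ into the statement that $\Lambda$ has at most one involution. This is precisely the work of Davis and Weinberger extending Milnor's homology-sphere argument, and it is where all the real content lies.
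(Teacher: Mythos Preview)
The paper does not prove this statement: Theorem~\ref{thm:Davis} is quoted without proof as Theorem~D of Davis's paper \cite{Davis} (with Weinberger's appendix), so there is no ``paper's own proof'' to compare against.

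As for your proposal on its own merits: the group-theoretic scaffolding is correct. The reduction step is sound --- transfer gives $H^{*}(M/N;\Q)\cong H^{*}(M;\Q)^{N}=H^{*}(M;\Q)$ equivariantly, so subquotients inherit all hypotheses --- and the assembly is clean: a $2$-group with a unique involution is cyclic or generalized quaternion, the latter is ruled out via the $Q_{8}\twoheadrightarrow\Z_{2}\times\Z_{2}$ subquotient, Burnside's transfer yields the normal $2$-complement, and your Frattini-quotient argument correctly produces a dihedral subquotient $D_{2r}$ from any failure of centrality. One small point: the claim that the unique involution in the cyclic image of $P$ in $\mathrm{GL}(W)$ acts as $-\mathrm{id}$ deserves a word of justification (it holds because in any faithful irreducible $\mathbb{F}_{r}$-representation of a cyclic $2$-group the generator acts through a primitive $2^{b}$-th root of unity, whose $2^{b-1}$ power is $-1$).

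That said, you yourself flag the honest situation in your ``Main obstacle'': the entire analytic content --- constructing the surgery semicharacteristic in $L$-theoretic/Witt-theoretic terms and extracting from odd $\chi_{1/2}(M)$ that $\Lambda$ has at most one involution --- is precisely Davis--Weinberger's theorem. So your proposal is not an independent proof but a correct unpacking of how the algebraic conclusion follows once that obstruction is in hand. Since the paper under review likewise treats the result as a black box, this is entirely appropriate for the context.
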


\begin{remark}
For $G$ and $\Gamma$ as in Theorem \ref{thm:Davis}, $G$ is cyclic if and only if $\Gamma$ is cyclic, and, more generally,
$G$ has a cyclic subgroup of index $r$ if and only if $\Gamma$ has a cyclic subgroup of index $r$.
\end{remark}

In the proof of Theorem \ref{thm:index 18}, we are interested in the Sylow $p$-subgroups of $\pi_1(M)$.
Theorem \ref{thm:Davis} states a condition under which the Sylow $2$-subgroup of a finite group acting freely on 
a $(4k+1)$-dimensional manifold is cyclic. Theorem \ref{thm:Wolf} provides a condition under which Sylow
$p$-subgroups for odd $p$ are cyclic.

\begin{theorem}[{\cite[Theorem 5.3.2]{Wolf}}]\label{thm:Wolf}
If $\Gamma$ is a finite group of odd order, then the following statements are equivalent:
       \begin{enumerate}
       \item $\Gamma$ satisfies every $(p^2)$ condition, i.e. $\Gamma$ does not contain a copy 
       of ${\mathbb{Z}}_p\times{\mathbb{Z}}_p$.
       \item Every Sylow $p$-subgroup of $\Gamma$ is cyclic. 
       \end{enumerate}
\end{theorem}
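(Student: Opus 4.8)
The plan is to reduce the stated equivalence to Sylow's theorems together with one statement about $p$-groups, and then to prove that statement by induction on the order.

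The implication $(2)\Rightarrow(1)$ is the easy direction and uses nothing about the order of $\Gamma$. If $\Gamma$ contained a subgroup $K\cong\mathbb{Z}_p\times\mathbb{Z}_p$ for some prime $p$, then $K$ would be a $p$-subgroup and would therefore sit inside a Sylow $p$-subgroup $P$ by Sylow's theorem. But $P$ is cyclic by hypothesis, so it has a unique subgroup of order $p$, whereas $K$ has $p+1$ such subgroups, a contradiction.

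For $(1)\Rightarrow(2)$, fix a prime $p$ dividing $|\Gamma|$ and a Sylow $p$-subgroup $P$; since $|\Gamma|$ is odd, $p$ is odd. As $P\leq\Gamma$, condition $(1)$ passes to $P$, so it suffices to show that a finite $p$-group with $p$ odd and no subgroup isomorphic to $\mathbb{Z}_p\times\mathbb{Z}_p$ is cyclic. I would prove this by induction on $|P|$. Groups of order at most $p^2$ are abelian, and an abelian $p$-group with no $\mathbb{Z}_p\times\mathbb{Z}_p$ is cyclic by the structure theorem; this settles the base of the induction. For the inductive step, the center $Z(P)$ is nontrivial and contains no $\mathbb{Z}_p\times\mathbb{Z}_p$, so I may pick a subgroup $C\leq Z(P)$ of order $p$, which is automatically normal in $P$. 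The crux is to show that $P/C$ still contains no $\mathbb{Z}_p\times\mathbb{Z}_p$: if $H/C\cong\mathbb{Z}_p\times\mathbb{Z}_p$ then $|H|=p^3$ and $C\leq Z(H)$, and one checks that every non-cyclic group of order $p^3$ with $p$ odd contains a copy of $\mathbb{Z}_p\times\mathbb{Z}_p$ (in the non-abelian case this uses that for odd $p$ the $p$-th power map on a group of nilpotency class two is a homomorphism, since $p\mid\binom{p}{2}$, which forces the elements of order dividing $p$ to form a subgroup of order at least $p^2$ and exponent $p$). Since $H/C$ is non-cyclic, so is $H$, and the resulting $\mathbb{Z}_p\times\mathbb{Z}_p\leq H\leq P$ contradicts $(1)$. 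Granting this, $P/C$ is cyclic by induction; lifting a generator to $x\in P$ gives $P=\langle x\rangle C$, a group generated by pairwise commuting elements, hence abelian, and therefore cyclic as in the base case. Running over all $p$, every Sylow subgroup of $\Gamma$ is cyclic, which is $(2)$, so \thm{thm:Wolf} holds.

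The step I expect to be the main obstacle is exactly propagating the ``no $\mathbb{Z}_p\times\mathbb{Z}_p$'' condition down to the quotient $P/C$, i.e. the claim about groups of order $p^3$. This is precisely where oddness of $|\Gamma|$ is essential: for $p=2$ the quaternion group $Q_8$ has no $\mathbb{Z}_2\times\mathbb{Z}_2$ yet is not cyclic, and $Q_8/Z(Q_8)\cong\mathbb{Z}_2\times\mathbb{Z}_2$, so both the claim and the induction collapse. If one prefers to avoid the order-$p^3$ bookkeeping, one can instead invoke the classical theorem of Burnside that for odd $p$ a $p$-group with a unique subgroup of order $p$ must be cyclic, after which the Sylow reduction above is immediate.
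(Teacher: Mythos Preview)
The paper does not prove this theorem; it simply quotes it from Wolf's book \cite[Theorem 5.3.2]{Wolf} as a known fact, so there is no in-paper argument to compare against. Your proof is correct and self-contained: the Sylow reduction is standard, and your inductive argument for the key $p$-group fact (that a finite $p$-group with $p$ odd and no $\mathbb{Z}_p\times\mathbb{Z}_p$ subgroup is cyclic) is sound, with the order-$p^3$ step correctly isolating where oddness enters via $p\mid\binom{p}{2}$. Your remark that one could bypass the order-$p^3$ analysis by quoting Burnside's classical result on $p$-groups with a unique subgroup of order $p$ is also accurate and is essentially how Wolf presents it.
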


Odd-order groups which satisfy all $(p^2)$ conditions have a nice presentation and enjoy some properties 
which will be discussed in what follows (see \cite[Theorem 5.4.1]{Wolf}):

\begin{theorem}[Burnside]\label{thm:Burnside}
If $G$ is a finite group in which every Sylow subgroup is cyclic, then $G$ is generated by two elements 
$A$ and $B$ with defining relations
$$A^m=B^n=1,\hspace{0.5cm}BAB^{-1}=A^r;$$
$$((r-1)n,m)=1,\hspace{0.5cm}r^n\cong 1({\rm{mod}}~m).$$
\end{theorem}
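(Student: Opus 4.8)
The plan is to prove the following structural statement, from which the presentation is immediate: $G$ is a split metacyclic group $G=\langle A\rangle\rtimes\langle B\rangle$ in which $\langle A\rangle=[G,G]$ is cyclic of order $m$, $\langle B\rangle$ is cyclic of order $n=[G:[G,G]]$, and $\gcd(m,n)=1$. Granting this, $\langle A\rangle$ being normal gives $BAB^{-1}=A^r$ for some integer $r$; conjugating this relation by $B^n=1$ forces $A^{r^n}=A$, i.e.\ $r^n\equiv1\pmod m$; and since every commutator of $G$ lies in $\langle A\rangle$ (as $G/\langle A\rangle\cong\langle B\rangle$ is abelian) and is a power of $[B,A]=A^{r-1}$, one gets $[G,G]=\langle A^{r-1}\rangle=\langle A\rangle$, whence $\gcd(r-1,m)=1$. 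Together with $\gcd(n,m)=1$ this yields $\gcd((r-1)n,m)=1$. Finally these relations are defining, because in any group presented by them $\langle A\rangle$ is normal of index dividing $n$ and order dividing $m$, and the two congruences force both to be exact.

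I would first show $G$ is solvable. Let $p$ be the smallest prime dividing $|G|$ and $P$ a Sylow $p$-subgroup, cyclic by hypothesis; then $P\leq C_G(P)$ and $N_G(P)/C_G(P)$ embeds in $\operatorname{Aut}(P)$, whose order divides $p^{a-1}(p-1)$ if $|P|=p^a$. Since $|N_G(P)/C_G(P)|$ is prime to $p$ while all its prime divisors are at least $p$, it is trivial, so $N_G(P)=C_G(P)$; Burnside's normal $p$-complement theorem then produces a normal subgroup $K$ with $G/K\cong P$, and $K$ again has cyclic Sylow subgroups, so $G$ is solvable by induction on $|G|$. Next, the Fitting subgroup $F$ of $G$ is nilpotent, hence a direct product of its cyclic Sylow subgroups and therefore cyclic; since $G$ is solvable, $C_G(F)=F$, so $G/F$ embeds in the abelian group $\operatorname{Aut}(F)$, and hence $[G,G]\leq F$ is cyclic. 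Finally $G/[G,G]$ is an abelian group with cyclic Sylow subgroups, hence cyclic.

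The remaining point --- and, I expect, the main obstacle --- is the coprimality $\gcd(m,n)=1$; suppose a prime $p$ divided both. Dividing by the (characteristic, hence normal) Hall $p'$-subgroup of the cyclic group $[G,G]$ reduces to the case where $[G,G]$ is a $p$-group, still with $p\mid[G:[G,G]]$. A normal $p$-subgroup lies in every Sylow $p$-subgroup, so $[G,G]$ lies in the cyclic, hence abelian, Sylow $p$-subgroup $P$; therefore $P$ centralizes $[G,G]$, so $C:=C_G([G,G])$ has index prime to $p$, and since $C/[G,G]$ is cyclic (a subgroup of $G/[G,G]$) while $[G,G]\leq Z(C)$, the group $C$ is abelian. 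Writing conjugation by a generator of the cyclic group $G/C$ as an automorphism $\sigma$ of $C$, one has $\sigma$ of order prime to $p$, and a commutator computation gives $[G,G]=\{\sigma(c)c^{-1}\mid c\in C\}$; as $[G,G]$ is a $p$-group this equals $(\sigma-1)\Z_{p^{a}}$ for the Sylow $p$-subgroup $\Z_{p^{a}}$ of $C$. But an automorphism of a cyclic $p$-group having order prime to $p$ is either trivial --- forcing $[G,G]=1$ --- or has $\sigma-1$ invertible on $\Z_{p^{a}}$ --- forcing $[G,G]=\Z_{p^{a}}$, i.e.\ $p\nmid[G:[G,G]]$. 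Either outcome contradicts the choice of $p$.

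Granting $\gcd(m,n)=1$, the Schur--Zassenhaus theorem splits $1\to[G,G]\to G\to G/[G,G]\to1$, yielding the split metacyclic structure above, and reading off the relations as in the first paragraph finishes the proof. Overall the argument is a matter of assembling standard facts --- Burnside transfer, the Fitting subgroup of a solvable group, the structure of automorphism groups of cyclic groups, and Schur--Zassenhaus --- with the one genuinely delicate ingredient being the coprimality of $|[G,G]|$ and $[G:[G,G]]$ handled in the previous paragraph.
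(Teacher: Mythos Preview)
Your argument is correct and is essentially the classical proof of this result. The paper, however, does not prove Theorem~\ref{thm:Burnside} at all: it is quoted without proof from \cite[Theorem 5.4.1]{Wolf} and used as a black box. So there is no ``paper's own proof'' to compare against.

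For completeness, your outline follows the standard route found in Wolf and elsewhere: Burnside transfer at the smallest prime gives solvability by induction; the Fitting subgroup of a solvable group is self-centralizing, which forces $[G,G]$ to be cyclic; the abelianization is then cyclic since all its Sylow subgroups are; the coprimality $\gcd(|[G,G]|,[G:[G,G]])=1$ is the only genuinely delicate step and your reduction to a cyclic $p$-group acted on by an automorphism of order prime to $p$ handles it cleanly; and Schur--Zassenhaus splits the resulting extension. One small quibble: in your final sentence about the relations being defining, the condition $\gcd((r-1)n,m)=1$ is not what forces the presented group to have order exactly $mn$ --- that follows already from $r^n\equiv 1\pmod m$, which makes the semidirect product $\Z_m\rtimes_r\Z_n$ well-defined of order $mn$ and hence equal to the abstractly presented group. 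The coprimality of $r-1$ with $m$ is a consequence of $\langle A^{r-1}\rangle=[G,G]=\langle A\rangle$, not an input to the presentation argument.
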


\begin{definition}
The collection of all groups of the form 
$$\langle A,B: A^m=B^n=1, BAB^{-1}=A^r\rangle~\text{where}~((r-1)n,m)=1~\text{and}~r^n\cong 1(\text{mod}~m)$$ 
will be denoted by $\mathcal{C}$. We partition the collection $\mathcal{C}$ into groups ${\mathcal{C}}_d$, where $d$ denotes the order of $r$ in the multiplicative group of units modulo $m$. 
\end{definition}

\begin{remark}\label{remark:C_d}
Note that every $\Gamma\in{\mathcal{C}}_d$ has a normal cyclic subgroup of index $d$. Indeed, the subgroup $H$
generated by $A$ and $B^d$ is a normal cyclic subgroup of index $d$ in $\Gamma$ (see \cite[Theorem 5.5.1]{Wolf}).
It can be proved moreover that $H$ is not strictly contained in any cyclic subgroup.
\end{remark}

The last collection of algebraic tools which we require are some basic results about $p$-groups and normal $p$-complements. 
Let $p$ be a prime. It is a well known fact that every $p$-group $P$ with $|P|=p^m$ has a normal subgroup of order $p^i$ 
for all $1\leq i\leq m$. Moreover, the classification of groups of order $p^3$ (see \cite[p. 140]{Burnside}) implies that any group 
of order $27$ is isomorphic to ${\mathbb{Z}}_{27}$, ${\mathbb{Z}}_9\times{\mathbb{Z}}_3$, ${\mathbb{Z}}_3\times{\mathbb{Z}}_3\times{\mathbb{Z}}_3$, ${\mathbb{Z}}_9\rtimes{\mathbb{Z}}_3$, or 
\begin{equation*}
U(3,3):=\left\{
\begin{pmatrix}
1 & x & z\\
0 & 1 & y\\
0 & 0 & 1
\end{pmatrix}
: x,y,z\in{\mathbb{Z}}_3\right\}.
\end{equation*}
We also require a result about groups of order $81$. Every non-cyclic group of order $81$ contains either a copy 
of ${\mathbb{Z}}_9\times{\mathbb{Z}}_3$ or a copy of ${\mathbb{Z}}_3\times{\mathbb{Z}}_3\times{\mathbb{Z}}_3$
(see \cite[pp. 140 and 145]{Burnside}). These facts imply the following proposition: 

\begin{proposition}\label{prop:3-groups}
If $G$ is a $3$-group which contains ${\mathbb{Z}}_3\times{\mathbb{Z}}_3$ but does not contain ${\mathbb{Z}}_9\times{\mathbb{Z}}_3$, ${\mathbb{Z}}_3\times{\mathbb{Z}}_3\times{\mathbb{Z}}_3$, 
or $U(3,3)$, then $G$ is isomorphic to ${\mathbb{Z}}_3\times{\mathbb{Z}}_3$ or ${\mathbb{Z}}_9\rtimes{\mathbb{Z}}_3$.
\end{proposition}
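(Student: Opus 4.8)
The plan is to run a short case analysis on the order $|G|=3^m$. First note that a subgroup of a cyclic group is cyclic, so since $G$ contains the non-cyclic group ${\mathbb{Z}}_3\times{\mathbb{Z}}_3$, the group $G$ is itself non-cyclic; in particular $m\geq 2$. The goal is then to prove $m\leq 3$, after which the conclusion follows at once from the two classification facts recalled above.

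\emph{The bound $m\leq 3$.} Suppose toward a contradiction that $m\geq 4$. I would invoke the standard fact that in a finite $p$-group every proper subgroup is properly contained in its normalizer, and hence that every subgroup $H$ of order $p^i$ with $i<m$ lies in a subgroup of order $p^{i+1}$ (take the preimage of a subgroup of order $p$ in the nontrivial $p$-group $N_G(H)/H$). Applying this twice, starting from the given copy of ${\mathbb{Z}}_3\times{\mathbb{Z}}_3$, yields a subgroup $H\leq G$ with $|H|=81$ and ${\mathbb{Z}}_3\times{\mathbb{Z}}_3\leq H$. Then $H$ is non-cyclic, so by the cited fact about groups of order $81$ it contains a copy of ${\mathbb{Z}}_9\times{\mathbb{Z}}_3$ or of ${\mathbb{Z}}_3\times{\mathbb{Z}}_3\times{\mathbb{Z}}_3$; hence so does $G$, contradicting the hypothesis. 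Therefore $m\leq 3$.

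\emph{Reading off $G$.} If $m=2$, then $|G|=9$ and $G$ already contains the order-$9$ group ${\mathbb{Z}}_3\times{\mathbb{Z}}_3$, so $G\cong{\mathbb{Z}}_3\times{\mathbb{Z}}_3$. If $m=3$, then $|G|=27$, and by the classification of groups of order $27$ recalled above $G$ is isomorphic to one of ${\mathbb{Z}}_{27}$, ${\mathbb{Z}}_9\times{\mathbb{Z}}_3$, ${\mathbb{Z}}_3\times{\mathbb{Z}}_3\times{\mathbb{Z}}_3$, ${\mathbb{Z}}_9\rtimes{\mathbb{Z}}_3$, or $U(3,3)$; since $G$ is non-cyclic and, by hypothesis, is none of ${\mathbb{Z}}_9\times{\mathbb{Z}}_3$, ${\mathbb{Z}}_3\times{\mathbb{Z}}_3\times{\mathbb{Z}}_3$, or $U(3,3)$, we are forced to $G\cong{\mathbb{Z}}_9\rtimes{\mathbb{Z}}_3$. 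In all cases $G$ is isomorphic to ${\mathbb{Z}}_3\times{\mathbb{Z}}_3$ or to ${\mathbb{Z}}_9\rtimes{\mathbb{Z}}_3$, as claimed.

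The argument is essentially bookkeeping against the quoted classifications, so no serious difficulty is expected. The one point requiring a little care is the case $m\geq 4$, where the given copy of ${\mathbb{Z}}_3\times{\mathbb{Z}}_3$ must be enlarged to a subgroup of order $81$: this uses the normalizer-growth property of $p$-groups, not merely the existence of a normal subgroup of each order, since such a normal subgroup of order $81$ could a priori be cyclic and yield no contradiction by itself.
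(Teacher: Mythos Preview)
Your proof is correct and slightly cleaner than the paper's. Both arguments reduce to the classification of groups of order $27$ together with the fact that every non-cyclic group of order $81$ contains a copy of ${\mathbb Z}_9\times{\mathbb Z}_3$ or ${\mathbb Z}_3^3$, and the only substantive step in either proof is ruling out large $|G|$. The paper does this by taking a \emph{normal} subgroup $P_{81}$ of order $81$; since such a subgroup could a priori be cyclic, the paper must then do extra work, producing an element $h$ of order three outside a maximal cyclic subgroup and forming $K=\langle g^3\rangle\langle h\rangle$ to obtain a non-cyclic group of order $81$. You instead use normalizer growth in $p$-groups to enlarge the given copy of ${\mathbb Z}_3\times{\mathbb Z}_3$ itself to a subgroup of order $81$, which is then automatically non-cyclic, so the contradiction is immediate. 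Your final remark shows you anticipated exactly this pitfall. The trade-off is that the paper uses only the elementary fact that $p$-groups have normal subgroups of every allowable order, whereas you invoke the (equally standard, but slightly less elementary) normalizer-growth property; in return your argument avoids the case split on whether $P_{81}$ is cyclic.
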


\begin{proof}
By the discussion above, we only need to prove that the order of $G$ is at most $81$. Suppose by way of contradiction 
that the order of $G$ is bigger than $81$ and let $P_{81}$ be a normal subgroup of $G$ of order $81$. 
If $P_{81}$ is non-cyclic, then it contains a copy of  ${\mathbb{Z}}_9\times{\mathbb{Z}}_3$
or ${\mathbb{Z}}_3\times{\mathbb{Z}}_3\times{\mathbb{Z}}_3$, a contradiction. Hence we may assume
that $P_{81}=\langle g\rangle$ is cyclic. Let $L$ denote the maximal cyclic subgroup of $G$ which contains $P_{81}$. 
Since $L$ is cyclic and $G$ contains a copy of ${\mathbb{Z}}_3\times{\mathbb{Z}}_3$, there exists 
${\mathbb{Z}}_3=\langle h\rangle\subseteq G$ such that $\langle h\rangle\cap L=\{1\}$. 
Since $\langle h\rangle$ normalizes $\langle g^3\rangle$, we can form the subgroup $K=\langle g^3\rangle\langle h\rangle$ which is a group of order $81$. Note that $K$ is non-cyclic.  Indeed, if $K$ is cyclic, then it would be abelian.
This implies that every element in $K$ is of the form $(g^3)^ih^j$ and has order at most $27$. 
As a non-cyclic group of order $81$, $K$ contains either a copy of ${\mathbb{Z}}_9\times{\mathbb{Z}}_3$ 
or a copy of ${\mathbb{Z}}_3\times{\mathbb{Z}}_3\times{\mathbb{Z}}_3$. This is a contradiction.
\end{proof}

The normal rank of a $p$-group $P$ is the largest integer $k$ such that $P$ contains an elementary abelian normal subgroup 
of order $p^k$. Our final algebraic result is the following:

\begin{theorem}[{\cite[p. 257]{Gorenstein}}]\label{thm:normal complement}
Let $G$ be a finite group and let $p$ be the smallest prime dividing the order of $G$. Let $P$ denote the Sylow $p$-subgroup 
of $G$. Suppose that $P$ is cyclic if $p=2$ and that the normal rank of $P$ is at most two otherwise. Then there exists 
a normal $p$-complement of $G$, i.e., a normal subgroup $N$ of $G$ such that $G=PN$ and $P\cap N=\{1\}$. 
\end{theorem}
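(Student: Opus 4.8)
The plan is to derive the statement from Frobenius's normal $p$-complement theorem, which asserts that $G$ has a normal $p$-complement if and only if $N_G(H)/C_G(H)$ is a $p$-group for every nontrivial $p$-subgroup $H$ of $G$; since every $p$-subgroup of $G$ is conjugate to a subgroup of the fixed Sylow $p$-subgroup $P$, it is enough to verify this for the nontrivial subgroups $H\le P$. The uniform mechanism in both cases is a bound on automorphisms: conjugation embeds $N_G(H)/C_G(H)$ into $\mathrm{Aut}(H)$, and by the Burnside basis theorem the kernel of $\mathrm{Aut}(H)\to\mathrm{Aut}(H/\Phi(H))$ is a $p$-group, so every prime divisor of $\lvert N_G(H)/C_G(H)\rvert$ other than $p$ divides $\lvert\mathrm{GL}_d(p)\rvert$, where $d$ is the minimal number of generators of $H$. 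As every prime dividing $\lvert G\rvert$ is at least $p$, the task reduces to checking that the $p'$-part of $\lvert\mathrm{GL}_d(p)\rvert$ has only prime factors smaller than $p$ for the relevant values of $d$.

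First I would dispose of the case $p=2$. Here $P$ is cyclic, so every $H\le P$ is cyclic, $d\le 1$, and $\mathrm{Aut}(H)$ is itself a $2$-group (it has order $2^{k-1}$ when $H\cong\Z_{2^k}$). Hence $N_G(H)/C_G(H)$ is a $2$-group for all $H$, and Frobenius produces a normal $2$-complement.

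For odd $p$ the crucial point is that $P$, and therefore every subgroup of $P$, is generated by at most two elements. This is exactly where the hypothesis that $P$ has normal rank at most two is used, via the structure theory of $p$-groups for odd $p$ (Blackburn): a $p$-group with $p$ odd and no normal elementary abelian subgroup of order $p^3$ is metacyclic or belongs to an explicit short list of $3$-groups, and in every case $P$ has rank at most two, so all of its subgroups are $2$-generated. Granting this, every $H\le P$ has $d\le 2$, so the $p'$-part of $\lvert N_G(H)/C_G(H)\rvert$ divides $\lvert\mathrm{GL}_2(p)\rvert_{p'}=(p-1)^2(p+1)$. Every prime dividing $(p-1)^2$ is at most $p-1<p$; and since $p$ is odd, $p+1$ is even, so the prime $2<p$ divides it, while every odd prime divisor of $p+1$ divides $(p+1)/2$ and is hence $<p$. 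Thus the $p'$-part of $\lvert N_G(H)/C_G(H)\rvert$ is trivial, Frobenius applies, and $G$ has a normal $p$-complement.

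The hard part is really the odd-$p$ structure statement --- that normal rank at most two forces ordinary rank at most two --- which is the only non-elementary input. If one wished to lean on it less, one could instead induct on $\lvert G\rvert$ via Thompson's normal $p$-complement theorem (which for odd $p$ reduces $p$-nilpotence of $G$ to that of $C_G(Z(P))$ and $N_G(J(P))$, where $J(P)$ is the Thompson subgroup): when both subgroups are proper they contain $P$ as a Sylow subgroup and one applies the inductive hypothesis, while if one of them equals $G$ one must still invoke $p$-group structure theory to carry the argument through an appropriate central quotient. Either way, I expect this ingredient from $p$-group theory to be the decisive step, with everything else being routine transfer theory.
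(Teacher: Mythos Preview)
The paper does not prove this theorem; it is quoted as a black box from Gorenstein's textbook, so there is no in-paper argument to compare against. Your outline is essentially the standard proof one finds in Gorenstein: reduce to Frobenius's criterion, embed $N_G(H)/C_G(H)$ in $\mathrm{Aut}(H)$, use the Burnside basis theorem to push the $p'$-part into $\mathrm{GL}_d(p)$, and then use the minimality of $p$ to force $d\le 2$ to suffice. The arithmetic on $\lvert\mathrm{GL}_2(p)\rvert_{p'}=(p-1)^2(p+1)$ is correct, and your handling of $p=2$ via cyclicity of $\mathrm{Aut}(\Z_{2^k})$ being a $2$-group is fine.

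One point of caution: your phrasing of the Blackburn input is slightly off. Blackburn's classification is of $p$-groups (odd $p$) with no elementary abelian subgroup of order $p^3$, i.e.\ of \emph{rank} at most two, not of \emph{normal} rank at most two. What you actually need, and what Gorenstein proves en route, is the implication ``normal rank $\le 2$ $\Rightarrow$ rank $\le 2$'' for odd $p$; this is the genuinely nontrivial $p$-group-theoretic step (and is where the exceptional $3$-groups appear). You have correctly flagged this as the decisive ingredient, but be aware that it is a separate (if closely related) statement from Blackburn's classification itself. With that adjustment your sketch is sound.
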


\section{Bazaikin Spaces}\label{sec:Bazaikin}

Besides spherical space forms, the only $13$-dimensional manifolds known to admit positive curvature are a family of biquotients called Bazaikin spaces.

Biquotients are defined in the following way. Let $G$ be a compact Lie group and let $U$ be a subgroup of $G\times G$. 
There exists an action of $U$ on $G$ defined by $(u_1,u_2)\centerdot g=u_1gu_2^{-1}$. In case the action is free, 
the quotient space is called a biquotient and is denoted by $G/\hspace{-0.1cm}/U$.

Bazaikin spaces are examples of biquotients but here we give a slightly different description (for more details, see \cite{FloritZiller}). Let $q=(q_1,\ldots,q_5)$ be a five tuple of integers and let $q_0:=\sum q_i$. There exists an injective homomorphism
\begin{align*}
Sp(2)\times S^1 &\longrightarrow U(5)\times U(5)\\
(A,z) &\longmapsto (\diag(z^{q_1},\ldots z^{q_5}),\diag(z^{q_0},A)),
\end{align*}
where we consider $Sp(2)$ as a subgroup of $SU(4)$ via the inclusion
\begin{equation*}
A+Bj\in Sp(2)\longmapsto
\begin{pmatrix}
A & B\\
-\bar{B} & \bar{A}
\end{pmatrix}. 
\end{equation*}
The above homomorphism gives an action of $Sp(2)\times S^1$ on $U(5)$ defined by 
$(A,z)\centerdot g=\diag(z^{q_1},\ldots z^{q_5})g\diag(z^{-q_0},A^{-1})$ which restricts to an action of $Sp(2)\times S^1$ 
on $SU(5)$. The kernel of this action is ${\mathbb{Z}}_2$ and hence we obtain an effective action 
of $Sp(2).S^1:=(Sp(2)\times S^1)/{{\mathbb{Z}}_2}$ on $SU(5)$. The action of $Sp(2).S^1$ on $SU(5)$ is free 
if and only if all the $q_i$ are odd and $\gcd(q_{\sigma(1)}+q_{\sigma(2)},q_{\sigma(3)}+q_{\sigma(4)})=2$ 
for all permutations $\sigma\in S_5$. In this case, the quotient space $B_q={SU(5)}/{Sp(2).S^1}$
is called a Bazaikin space. The Bazaikin space $B_q$ admits positive sectional curvature 
if and only if $q_i+q_j>0$ (or $<0$) for all $i<j$.

\begin{proposition}[{\cite{Bazaikin1996}}]\label{prop:cohomology of Bazaikin spaces}
The integral cohomology groups of the Bazaikin space $B_q$ are given by:
\begin{equation*}
H^k(B_q;\mathbb{Z})=
\begin{cases}
\mathbb{Z} & \text{if}~k=0,2,4,9,11,13\\
{\mathbb{Z}}_m & \text{if}~k=6,8\\
0 & otherwise
\end{cases}
\end{equation*}
Here, $m=\frac{1}{8}\sum_{i<j<k}q_iq_jq_k$.
Moreover, if $x$ is a generator of $H^2(B_q;\mathbb{Z})$, then $x^k$ is a generator of $H^{2k}(B_q;\mathbb{Z})$ 
for $k=2,3,4$. In particular, $B_q$ has the rational cohomology of $\mathbb{CP}^2\times{\mathbb{S}}^9$ 
and the mod $3$ cohomology of either ${\mathbb{CP}}^2\times\s^9$ or ${\mathbb{CP}}^4\times\s^5$.
\end{proposition}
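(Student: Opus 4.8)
The plan is to take the integral cohomology groups of $B_q$, together with the value $m=\tfrac18\sum_{i<j<k}q_iq_jq_k$, from Bazaikin's original computation in \cite{Bazaikin1996}, and then to deduce the multiplicative structure and the rational and mod $3$ statements by hand. The only geometric input I need is the following. Since $Sp(2)$ sits in $SU(4)\subset SU(5)$ as a block, the manifold $P:=SU(5)/Sp(2)$ fibers as $SU(4)/Sp(2)\embedded P\to SU(5)/SU(4)$, i.e.\ as an $\s^5$-bundle over $\s^9$ (using $SU(4)/Sp(2)=\Spin(6)/\Spin(5)=\s^5$). The Serre spectral sequence of this bundle has no room for a nonzero differential, so $H^k(P;\Z)=\Z$ for $k\in\{0,5,9,14\}$ and vanishes otherwise, and $P$ is simply connected. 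The residual circle of $Sp(2).S^1$ acts freely on $P$ with quotient $B_q$, giving a principal circle bundle $S^1\to P\xrightarrow{\pi}B_q$ with Euler class $e\in H^2(B_q;\Z)$; note $B_q$ is a closed, simply connected (hence orientable) $13$-manifold. The integer $m$ is, up to sign, the order of a cohomology class attached to this bundle written through the weights $q_i$; this identification is the computational heart of \cite{Bazaikin1996} and is the one step I would cite rather than reprove. Everything else is Gysin-sequence bookkeeping together with the universal coefficient theorem and Poincar\'e duality.

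Granting the groups, the statement about $x^k$ falls out of the Gysin sequence
\[
\cdots\longrightarrow H^{k-2}(B_q)\xrightarrow{\ \cup e\ }H^k(B_q)\xrightarrow{\ \pi^*\ }H^k(P)\longrightarrow\cdots .
\]
Because $H^k(P)=0$ for $k\in\{2,3,4,6,7,8\}$, exactness forces $\cup e\colon H^{k-2}(B_q)\to H^k(B_q)$ to be surjective for each of these $k$. Applying this with $k=2$ shows $e$ generates $H^2(B_q)\cong\Z$, so I set $x:=e$; applying it with $k=4,6,8$ in turn shows $x^2$ generates $H^4(B_q)\cong\Z$, then $x^3$ generates $H^6(B_q)\cong\Z_m$, then $x^4$ generates $H^8(B_q)\cong\Z_m$, which is precisely the asserted multiplicative statement.

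For the rational cohomology the torsion groups disappear, so $H^*(B_q;\Q)$ is one-dimensional in degrees $0,2,4,9,11,13$, with $x$ and $x^2$ generating degrees $2$ and $4$ and $x^3=0$. Fixing a generator $y\in H^9(B_q;\Q)$, non-degeneracy of the Poincar\'e pairings $H^4\times H^9\to H^{13}$ and $H^2\times H^{11}\to H^{13}$ gives $x^2y\neq 0$ (hence a generator of $H^{13}$) and then $xy\neq 0$ (since $x\cup(xy)=x^2y$), hence a generator of $H^{11}$; and $y^2\in H^{18}(B_q;\Q)=0$. Therefore $H^*(B_q;\Q)\cong\Q[x,y]/(x^3,y^2)\cong H^*(\mathbb{CP}^2\times\s^9;\Q)$.

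For mod $3$ coefficients, the universal coefficient theorem shows $H^k(B_q;\Z_3)\cong\Z_3$ for $k\in\{0,2,4,9,11,13\}$ unconditionally, for $k\in\{5,6,7,8\}$ precisely when $3\mid m$, and is zero otherwise. If $3\nmid m$, then $H^*(B_q;\Z_3)$ has the same shape as the rational cohomology, and since $H^6(B_q;\Z_3)=0$ one has $x^3=0$ mod $3$; the Poincar\'e-duality argument above (now over $\Z_3$) identifies $H^*(B_q;\Z_3)$ with $H^*(\mathbb{CP}^2\times\s^9;\Z_3)$. If $3\mid m$, then $H^*(B_q;\Z_3)$ is $\Z_3$ in degrees $0,2,4,5,6,7,8,9,11,13$; since mod $3$ reduction is multiplicative and the reduction map $\Z_m=H^{2k}(B_q;\Z)\to H^{2k}(B_q;\Z_3)\cong\Z_3$ is surjective for $k=3,4$, the classes $x^3$ and $x^4$ remain generators mod $3$, while $x^5\in H^{10}(B_q;\Z_3)=0$. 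Hence $\{1,x,x^2,x^3,x^4\}$ spans a subring isomorphic to $\Z_3[x]/(x^5)=H^*(\mathbb{CP}^4;\Z_3)$, and choosing a generator $\bar y\in H^5(B_q;\Z_3)$ and invoking Poincar\'e duality once more (so $x^i\bar y$ generates $H^{2i+5}$ for $0\le i\le 4$ and $\bar y^2\in H^{10}(B_q;\Z_3)=0$) gives $H^*(B_q;\Z_3)\cong H^*(\mathbb{CP}^4\times\s^5;\Z_3)$. The main obstacle throughout is the one I would defer to \cite{Bazaikin1996}: the evaluation of $m$ as the stated cubic characteristic number in the weights.
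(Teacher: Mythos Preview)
The paper does not give its own proof of this proposition; it is stated with a citation to \cite{Bazaikin1996} and used as input. So there is no ``paper's proof'' to compare against beyond the attribution.

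Your argument is correct and in fact supplies more than the paper does: you take only the additive groups and the value of $m$ from \cite{Bazaikin1996} and then derive the multiplicative statement and the rational and mod~$3$ consequences yourself. The Gysin-sequence step is clean---with $H^k(P)=0$ for $k\in\{2,3,4,6,7,8\}$ you correctly get surjectivity of $\cup\, e$ onto $H^k(B_q)$ in those degrees, which is exactly what is needed to see that $x=e$ and its powers generate. The Poincar\'e-duality arguments over $\Q$ and $\Z_3$ are routine and correctly handled, and your universal-coefficient bookkeeping for the mod~$3$ groups (in particular $H^{10}(B_q;\Z_3)=0$, so $x^5=0$ and $\bar y^2=0$) checks out. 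The one nontrivial point you flag---identifying the torsion order as $m=\tfrac18\sum q_iq_jq_k$---is precisely the computation that \cite{Bazaikin1996} carries out, so deferring it is appropriate.
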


One of the crucial steps in the proof of Theorem \ref{thm:index 18}, is to find a subgroup of $\pi_1(M)$ of minimal index 
which satisfies the conditions of Theorem \ref{thm:Davis}. The following lemma provides such a subgroup:

\begin{lemma}\label{lem:trivial action on cohomology}
Let $M^{13}$ be a closed Riemannian manifold with positive sectional curvature whose universal cover  
has the rational cohomology of a Bazaikin space. Let $\Gamma:=\pi_1(M)$. Then $\Gamma$ has a subgroup $\Gamma_1$ 
of index at most two that acts trivially on $H^*(\widetilde{M};\mathbb{Q})$.
\end{lemma}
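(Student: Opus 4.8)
The plan is to reduce the statement to a rigidity property of the graded ring $H^*(\widetilde M;\Q)$ and to exhibit the desired subgroup as the kernel of an explicit sign homomorphism. First I would record the standing facts. Since $M$ is closed and positively curved, $\widetilde M$ is compact by Bonnet--Myers, so $\Gamma=\pi_1(M)$ is finite and acts freely on $\widetilde M$ by deck transformations, hence acts on the graded ring $H^*(\widetilde M;\Q)$. Since $M$ is odd-dimensional and positively curved, Synge's theorem makes $M$ orientable, so $\widetilde M$ is orientable and the deck action preserves a fundamental class; in particular $\Gamma$ acts trivially on $H^{13}(\widetilde M;\Q)\cong\Q$. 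Finally, because $\widetilde M$ has the rational cohomology ring of a Bazaikin space, Proposition~\ref{prop:cohomology of Bazaikin spaces} describes it: there is a generator $x\in H^2(\widetilde M;\Q)$ with $x^2$ a generator of $H^4(\widetilde M;\Q)$, each nonzero rational cohomology group (degrees $0,2,4,9,11,13$) is one-dimensional, and, choosing a generator $y\in H^9(\widetilde M;\Q)$, Poincar\'e duality identifies $xy$ and $x^2y$ as generators of $H^{11}$ and $H^{13}$.

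The computation of the action is then short. For $g\in\Gamma$, the automorphism $g^*$ scales the one-dimensional space $H^2(\widetilde M;\Q)$ by some $\la(g)\in\Q^\times$ and scales $H^9(\widetilde M;\Q)$ by some $\mu(g)\in\Q^\times$; since $g$ has finite order, $\la(g)$ and $\mu(g)$ are roots of unity in $\Q^\times$, hence lie in $\{\pm1\}$. Multiplicativity of $g^*$ forces it to act by $\la(g)^2=1$ on $H^4$, by $\la(g)\mu(g)$ on $H^{11}$, and by $\la(g)^2\mu(g)=\mu(g)$ on $H^{13}$; as the action on $H^{13}$ is trivial, $\mu(g)=1$. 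Therefore $g^*$ is multiplication by $\la(g)$ on $H^2$ and on $H^{11}$ and is the identity in every other degree, so $g$ acts trivially on $H^*(\widetilde M;\Q)$ if and only if $\la(g)=1$.

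To conclude, I would observe that $\la\colon\Gamma\to\{\pm1\}$ is a group homomorphism: from $(gh)^*=h^*g^*$ and the fact that these are commuting scalars on $H^2(\widetilde M;\Q)$ one gets $\la(gh)=\la(g)\la(h)$. Set $\Gamma_1:=\ker\la$. By the previous paragraph $\Gamma_1$ acts trivially on $H^*(\widetilde M;\Q)$, and $[\Gamma:\Gamma_1]\le 2$ since $\Gamma/\Gamma_1$ embeds in $\Z_2$. This is the required subgroup. I do not expect a genuine obstacle here; the only point needing care is that the entire induced action is pinned down by its restriction to $H^2$, which uses orientability (to trivialize the top class) together with the ring structure of Proposition~\ref{prop:cohomology of Bazaikin spaces} (to propagate this back to $H^9$ and forward to $H^{11}$).
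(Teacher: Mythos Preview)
Your proof is correct and follows essentially the same approach as the paper's: define the sign homomorphism $\Gamma\to\{\pm1\}$ via the action on $H^2(\widetilde M;\Q)$, take $\Gamma_1$ to be its kernel, and use the ring structure together with triviality of the action on $H^{13}$ to see that $\Gamma_1$ acts trivially everywhere. The only cosmetic difference is that you invoke Synge's theorem (orientability of odd-dimensional positively curved $M$, hence orientation-preserving deck action) where the paper cites Weinstein's theorem directly on the free isometric action on $\widetilde M$; both yield the needed triviality on the top cohomology.
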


\begin{proof}
Since $\Gamma$ acts isometrically and freely on $\widetilde{M}$, Weinstein's theorem implies that $\Gamma$ acts 
by orientation-preserving homeomorphisms on $\widetilde{M}$ and hence trivially on $H^{13}(\widetilde{M};\mathbb{Q})$.
Now, $H^*(\widetilde{M};\mathbb{Q})\cong H^*({\mathbb{CP}}^2\times{\mathbb{S}}^9;\mathbb{Q})$ by Proposition \ref{prop:cohomology of Bazaikin spaces}, so $H^*(\widetilde{M};\mathbb{Q})$ is generated by some 
$\alpha\in H^2(\widetilde{M};\mathbb{Q})$ and $\beta\in H^9(\widetilde{M};\mathbb{Q})$. Since $\Gamma$
acts by homomorphisms on $H^*(\widetilde{M};\mathbb{Q})$, there is a subgroup $\Gamma_1\trianglelefteq\Gamma$ 
of index at most two fixing $\alpha$. Indeed, $\Gamma_1$ is the kernel of the homomorphism $f:\Gamma\to\Z_2=\{\pm 1\}$
defined by $f(\gamma)={\epsilon}_{\gamma}$, where $\gamma^*(\alpha)={\epsilon}_{\gamma}\alpha$. 
Since $\Gamma_1$ also fixes $\alpha^2$ and $\alpha^2\beta\in H^{13}(\widetilde{M};\mathbb{Q})$, 
it acts trivially on $H^*(\widetilde{M};\mathbb{Q})$.
\end{proof}

\section{A new obstruction}\label{sec:obstruction}

One of the key tools in the proof of Theorem \ref{thm:index 18} has to do with the structure of finite groups which act freely 
and by isometries on a positively curved manifold with circle symmetry. Here, we generalize an obstruction from Kennard \cite[Proposition 5.1]{Kennard} (cf. SunWang \cite[Lemma 1.5]{SunWang}) to restrict the structure of such groups.

\begin{lemma}\label{lem:Z_p}
Let $M$ be a closed positively curved Riemannian manifold. Let $\Gamma$ be a group of odd order which acts freely 
and by isometries on $M$. Assume $M$ admits an effective isometric $S^1$-action which commutes with the action of $\Gamma$. 
If $H=\langle\alpha\rangle$ is a normal cyclic subgroup of $\Gamma$ that is not strictly contained in a larger cyclic subgroup, 
then $|{\Gamma}/H|$ divides the Lefschetz number
${\text{Lef}}(\alpha;\bar{M}, M^{S^1})=\sum_i(-1)^i\tr(\alpha^*:H^i(\bar{M}, M^{S^1};\Q)\to H^i(\bar{M}, M^{S^1};\Q))$ 
of $\alpha$, where $\bar{M}=M/{S^1}$.
\end{lemma}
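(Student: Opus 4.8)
The plan is to exploit the commuting actions of $\Gamma$ and $S^1$ on $M$ to pass to the quotient $\bar M = M/S^1$, where the $\Gamma$-action descends (since the two actions commute), and then to study the induced $\Gamma$-action on the pair $(\bar M, M^{S^1})$ via a transfer/averaging argument over the cyclic quotient $\Gamma/H$. The key point is that $H = \langle\alpha\rangle$ acts freely on $M$, hence on $\bar M \setminus M^{S^1}$, and on $M^{S^1}$ as well, so the relative cohomology $H^*(\bar M, M^{S^1}; \Q)$ carries a free $H$-action in a suitable homological sense; the index $|\Gamma/H|$ will emerge as the dimension of a space of $\Gamma/H$-invariants, which one computes to be the Lefschetz number of $\alpha$.

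More precisely, first I would verify that $\bar M$ is a nice enough space (a rational homology manifold, or at least that the relevant cohomology is finite-dimensional) and that $M^{S^1}$ sits inside it as the image of the fixed-point set; the quotient map $M \setminus M^{S^1} \to \bar M \setminus M^{S^1}$ is a fibration with fiber $S^1$, so rationally $H^*(\bar M, M^{S^1}; \Q)$ is computable and, crucially, the residual $\Gamma$-action is well-defined. Second, since $H \trianglelefteq \Gamma$, the group $\Gamma/H$ acts on the fixed-point data of $H$; but $H$ acts \emph{freely} on $M$ (it is a subgroup of the free $\Gamma$-action), so its contribution to Lefschetz-type counts vanishes, and one gets that $\mathrm{Lef}(\gamma; \bar M, M^{S^1}) = \mathrm{Lef}(\alpha; \bar M, M^{S^1})$ for every $\gamma$ in the coset $\alpha H$, and more generally the Lefschetz number is constant on $H$-cosets that generate the same ``type'' — here I would use that $H$ is not contained in any larger cyclic subgroup, which forces the $\Gamma/H$-orbit structure on the characters to behave rigidly. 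Third, I would sum the Lefschetz numbers over a transversal of $H$ in $\Gamma$, or equivalently apply the Lefschetz fixed point formula to the action of the whole group $\Gamma$ on $(\bar M, M^{S^1})$: the averaged Lefschetz number $\frac{1}{|\Gamma/H|}\sum_{\gamma H}\mathrm{Lef}(\gamma)$ equals the Euler characteristic of the quotient $(\bar M/\Gamma, M^{S^1}/\Gamma)$ (or of the invariant cohomology), hence is an integer. Combining with the coset-constancy of the Lefschetz number gives that $|\Gamma/H|$ divides $\mathrm{Lef}(\alpha; \bar M, M^{S^1})$.

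The main obstacle I expect is making the ``$H$ acts freely, so its Lefschetz contribution is trivial and the Lefschetz number is constant on the coset $\alpha H$'' step rigorous: a priori the different elements $\alpha h$ with $h \in H$ act by different automorphisms on $H^*(\bar M, M^{S^1}; \Q)$, and one needs the hypothesis that $H$ is maximal cyclic (equivalently, $\langle\alpha\rangle$ is self-centralizing among cyclic subgroups, or that $\alpha$ generates its own fixed-point structure) to conclude that $\alpha$ and $\alpha h$ have the same Lefschetz number on the pair. This is presumably where Kennard's original argument in \cite[Proposition 5.1]{Kennard} is generalized, and I would model the bookkeeping on that proof: reduce to primary components, use that on each $\Z_{p^k}$-isotypic piece of $H^*(\bar M, M^{S^1})$ the trace of $\alpha$ is a sum of primitive $p^k$-th roots of unity whose $\Q$-Galois average is what enters the integer Euler characteristic, and the maximality of $H$ guarantees no ``extra'' cyclic generator can twist this. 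The positive-curvature and effectiveness hypotheses enter only through Berger/Sugahara-type statements ensuring $M^{S^1} \neq \emptyset$ and through the fact that the $S^1$-fixed-point set is a closed totally geodesic submanifold, so that the pair $(\bar M, M^{S^1})$ is cohomologically well-behaved — I would not expect curvature to play a deeper role in this particular lemma.
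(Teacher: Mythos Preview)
Your proposal has a genuine gap: it misses both the mechanism by which the Lefschetz number becomes a divisible quantity and the precise role of positive curvature.

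The paper's argument is not an averaging/transfer argument at all. The first step is to identify the Lefschetz number with an Euler characteristic: since $\alpha$ acts freely on $M$ (hence on $M^{S^1}$), one has $(M^{S^1})^\alpha=\emptyset$, and the Lefschetz fixed point theorem for pairs gives
\[
\mathrm{Lef}(\alpha;\bar M,M^{S^1})=\chi\bigl(\bar M^\alpha,(M^{S^1})^\alpha\bigr)=\chi(\bar M^\alpha).
\]
The divisibility then comes from showing that $G=\Gamma/H$ permutes the connected components of $\bar M^\alpha$ \emph{freely}: since components in the same $G$-orbit are homeomorphic, each orbit contributes a multiple of $|G|$ to $\chi(\bar M^\alpha)$. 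Your plan never arrives at $\chi(\bar M^\alpha)$, and the ``Lefschetz number is constant on $H$-cosets'' step you worry about is not the right obstacle --- indeed, since $H=\langle\alpha\rangle$, the coset $\alpha H$ just consists of powers of $\alpha$, and there is no reason their traces agree.

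More seriously, you underestimate the role of positive curvature. The nontrivial point is ruling out the case where some nontrivial $[g]\in G$ stabilizes a component $F$ of $\bar M^\alpha$. Here the paper invokes Rong's theorem \cite[Theorem A]{Rong2005}: an isometry of a closed positively curved manifold commuting with an $S^1$-action must preserve some circle orbit. Applied to $g$ acting on $\pi^{-1}(F)$, this gives a circle orbit $C$ preserved by both $g$ and $\alpha$; hence $\langle g,\alpha\rangle$ acts freely on $C\cong S^1$ and is cyclic, contradicting the maximality of $H=\langle\alpha\rangle$. This is exactly where both the maximal-cyclic hypothesis and positive curvature are used, and it is not a cohomological or Galois-averaging statement. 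Your remark that curvature ``enters only through Berger/Sugahara-type statements'' is therefore incorrect; without Rong's circle-orbit result (or something equivalent) the freeness of the $G$-action on components of $\bar M^\alpha$ cannot be established, and the argument collapses.
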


Note that when applied to $\Gamma=\Z_p\times\Z_p$, Lemma \ref{lem:Z_p} implies that $p$ divides 
${\text{Lef}}(\alpha;\bar{M}, M^{S^1})$. Recall also from Remark \ref{remark:C_d} that for $\Gamma\in{\mathcal{C}}_d$, 
the subgroup $H$ generated by $A$ and $B^d$ is a normal cyclic subgroup of index $d$ in $\Gamma$ 
that is not strictly contained in any cyclic subgroup. Hence Lemma \ref{lem:Z_p} applies to $\Gamma\in{\mathcal{C}}_d$ 
and shows that $d$ divides ${\text{Lef}}(\alpha;\bar{M}, M^{S^1})$. 

\begin{proof}
Since the action of $\alpha$ on $M$ commutes with the circle action, 
$\alpha$ acts on $\bar{M}$. Consider the fixed point set ${\bar{M}}^{\alpha}$. 
We claim that $\chi({\bar{M}}^{\alpha})={\text{Lef}}(\alpha;\bar{M}, M^{S^1})$. 
In order to prove this, note that since the action of $\Gamma$ on $M$ commutes with the $S^1$-action, 
$\alpha$ acts on $M^{S^1}$. Moreover, the action of $\alpha$ on $M^{S^1}$ is free since $\Gamma$ acts freely on $M$. 
Therefore, $({M^{S^1}})^{\alpha}=\emptyset$ 
and $\chi({\bar{M}}^{\alpha})=\chi({\bar{M}}^{\alpha},({M^{S^1}})^{\alpha})$, where 
$\chi({\bar{M}}^{\alpha},({M^{S^1}})^{\alpha})=\sum_i(-1)^i\dim H^i({\bar{M}}^{\alpha},({M^{S^1}})^{\alpha};\Q)$.
The claim follows since $\chi({\bar{M}}^{\alpha},({M^{S^1}})^{\alpha})={\text{Lef}}(\alpha;\bar{M}, M^{S^1})$
by \cite[Exercise 2.C.4]{Hatcher} (cf. \cite[p. 250]{AlldayPuppe}). 
\par
We may assume that ${\bar{M}}^{\alpha}$ is non-empty since otherwise ${\text{Lef}}(\alpha;\bar{M}, M^{S^1})$ 
is zero by the claim and we are done. Since $H=\langle\alpha\rangle$ is a normal subgroup of $\Gamma$, 
$\Gamma$ acts on ${\bar{M}}^{\alpha}$. The kernel of this action is $H$ and hence it induces an action 
of $G={\Gamma}/H$ on ${\bar{M}}^{\alpha}$. 
Note that $G$ acts on the set of components of ${\bar{M}}^{\alpha}$ and hence partitions the set of components into orbits. 
Let $\{F_i\}_{i=1}^m$ be the set of connected components of ${\bar{M}}^{\alpha}$ and let $G_{F_i}$ 
denote the isotropy group of $F_i$. Either ${G_{F_i}}=1$ for all $i$ or ${G_{F_i}}\neq 1$ for some $i$. 
\par
Suppose first that ${G_{F_i}}=1$ for all $i$. In this case, we have $|G.F_i|=|G|$ for all $i$. 
This means that each orbit consists of $|G|$ components. In addition, any two components in the same orbit are homeomorphic and hence have the same Euler characteristic. Therefore, $|{\Gamma}/H|=|G|$ divides $\chi({\bar{M}}^{\alpha})$ 
and hence ${\text{Lef}}(\alpha;\bar{M}, M^{S^1})$.
\par
Now, consider the case in which ${G_{F_i}}\neq 1$ for some $i$. In this case, there exists non-trivial $[g]\in G$ 
such that $[g].F_i=F_i$ and hence $g.F_i=F_i$. Let $\pi:M\rightarrow{\bar{M}}$ denote the quotient map.
Note that $g$ acts isometrically on $\pi^{-1}(F_i)$. Since this action commutes with the circle action on $\pi^{-1}(F_i)$, 
$g$ preserves some circle orbit $C$ in $\pi^{-1}(F_i)$ (see \cite[Theorem A]{Rong2005}). But $\alpha$ also acts on $C$ 
and hence $\langle g,\alpha\rangle$ acts on $C$. This implies that $\langle g,\alpha\rangle$ is cyclic. 
Since $g\notin\langle\alpha\rangle$, we conclude that $\langle\alpha\rangle$ is strictly contained in $\langle g,\alpha\rangle$,
a contradiction.
\end{proof}

\begin{remark}\label{remark:Z_p trivial}
In our applications, except the first case in the proof of Lemma \ref{lem:dim 5}, the cohomology groups 
$H^i(\bar{M},M^{S^1};\Q)$ have dimension at most one.  Hence the induced action of $\alpha$ 
on $H^*(\bar{M},M^{S^1};\Q)$ is trivial and we may replace ${\text{Lef}}(\alpha;\bar{M}, M^{S^1})$ 
by $\chi(\bar{M},M^{S^1})$.  
\end{remark}

When applying Lemma \ref{lem:Z_p}, we need to figure out the cohomology groups $H^i(M/{S^1},M^{S^1})$.
The main tool in calculating these groups is the Smith-Gysin sequence which relates the relative cohomology groups  $H^i(M/{S^1},M^{S^1})$ to the cohomology groups of $M$ and $M^{S^1}$.

\begin{theorem}[Smith-Gysin sequence, {\cite[p. 161]{Bredon}}]
If $S^1$ act on a paracompact space $X$, then there exists a long exact sequence
\begin{align*}
\ldots &\longrightarrow H^i(X/{S^1},X^{S^1})\longrightarrow H^i(X)\longrightarrow H^{i-1}(X/{S^1},X^{S^1})\oplus H^i(X^{S^1})\\
 & \longrightarrow H^{i+1}(X/{S^1},X^{S^1})\longrightarrow\ldots
\end{align*}
called the Smith-Gysin sequence. Here, we take coefficients in $\mathbb{Q}$.
\end{theorem}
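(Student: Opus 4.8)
\medskip
\noindent This is \cite[p.~161]{Bredon}, so in practice one simply invokes it; here, though, is the route I would take to prove it. Write $\pi\colon X\to X/{S^1}$ for the orbit map and set $U:=(X/{S^1})\setminus X^{S^1}$, an open subset of $X/{S^1}$ since the fixed set $X^{S^1}$ is closed; let $i\colon U\embedded X/{S^1}$ be the inclusion. Rationally a finite isotropy group is invisible, so $\pi$ should behave like a circle bundle that degenerates to a point precisely over $X^{S^1}$, and the plan is to read the asserted sequence off the Leray spectral sequence of $\pi$ with coefficients in the constant sheaf $\mathbb{Q}_X$, which abuts to $H^*(X;\mathbb{Q})$.

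The first step is to identify the higher direct images $R^q\pi_*\mathbb{Q}_X$. A fiber of $\pi$ over a point of $U$ is an orbit, a finite quotient of $S^1$ and hence rationally an $S^1$; a fiber over a point of $X^{S^1}$ is a single point. Therefore $R^q\pi_*\mathbb{Q}_X=0$ for $q\geq 2$, $R^0\pi_*\mathbb{Q}_X=\mathbb{Q}_{X/{S^1}}$, and $R^1\pi_*\mathbb{Q}_X$ has stalk $\mathbb{Q}$ over $U$ and stalk $0$ over $X^{S^1}$. A local computation near $X^{S^1}$ — using a slice around the fixed set, where the orbits collapse to points and carry no $H^1$ — identifies $R^1\pi_*\mathbb{Q}_X$ with $i_!\mathbb{Q}_U$, the extension by zero of the constant sheaf; here one also uses that the orbits are canonically oriented (by the orientation of $S^1$), so there is no monodromy over $U$. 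Since $H^p(X/{S^1};i_!\mathbb{Q}_U)\cong H^p(X/{S^1},X^{S^1};\mathbb{Q})$, the $E_2$-page is supported in the two rows $q=0,1$, with $E_2^{p,0}=H^p(X/{S^1};\mathbb{Q})$ and $E_2^{p,1}\cong H^p(X/{S^1},X^{S^1};\mathbb{Q})$.

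A spectral sequence living in two adjacent rows degenerates at $E_3$ into a long exact sequence, which here reads
\[
\cdots\to H^i(X/{S^1};\mathbb{Q})\xrightarrow{\pi^*}H^i(X;\mathbb{Q})\to H^{i-1}(X/{S^1},X^{S^1};\mathbb{Q})\xrightarrow{d_2}H^{i+1}(X/{S^1};\mathbb{Q})\to\cdots .
\]
To bring this into the stated form I would splice it with the long exact cohomology sequence of the pair $(X/{S^1},X^{S^1})$, the two sequences sharing the groups $H^{\bullet}(X/{S^1};\mathbb{Q})$. The compatibility needed for the splice is that $\rho\circ d_2=0$, where $\rho\colon H^*(X/{S^1})\to H^*(X^{S^1})$ is restriction; this holds by naturality of the Leray spectral sequence under the inclusion $X^{S^1}\embedded X$, on which the $S^1$-action is trivial and so the corresponding spectral sequence has vanishing $d_2$. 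Consequently $d_2$ factors through $H^{i+1}(X/{S^1},X^{S^1};\mathbb{Q})$, and a routine braid-of-exact-sequences chase then produces exactly the long exact sequence in the statement, the middle term $H^{i-1}(X/{S^1},X^{S^1})\oplus H^i(X^{S^1})$ recording fiber integration relative to $X^{S^1}$ together with restriction to the fixed set.

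The one genuinely non-formal step is the identification $R^1\pi_*\mathbb{Q}_X\cong i_!\mathbb{Q}_U$: one must control the local cohomology of $\pi$ near $X^{S^1}$ (that the orbit class dies precisely there) and rule out monodromy over $U$. In the smooth case this is immediate from the slice theorem; for general paracompact $X$ it rests on the local structure theory of compact transformation groups. Everything past the $E_2$-page is diagram chasing.
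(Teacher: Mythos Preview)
The paper does not prove this theorem; it merely states it and cites \cite[p.~161]{Bredon} as the source, then immediately uses it as a black box in the subsequent computations. So there is no proof in the paper to compare your proposal against.

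That said, your sketch via the Leray spectral sequence of the orbit map $\pi\colon X\to X/S^1$ is a standard and essentially correct route to the Smith--Gysin sequence, and is close in spirit to how Bredon develops it. The two-row collapse and the splice with the long exact sequence of the pair are the right mechanism. You yourself correctly flag the only genuinely delicate point: the identification $R^1\pi_*\mathbb{Q}_X\cong i_!\mathbb{Q}_U$ (and the vanishing of higher $R^q\pi_*$) needs real input about the local structure of the action near the fixed set, not just the fiberwise statement. In the smooth setting the slice theorem makes this immediate; for general paracompact $X$ one has to invoke the local theory of compact Lie group actions that Bredon sets up earlier in the book. For the purposes of this paper none of that matters --- the result is simply quoted --- but your outline would serve as a fine proof sketch if one were asked for it.
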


\section{Proof of theorem \ref{thm:index 18} for rational cohomology Bazaikin spaces}\label{sec:Q}

In this section, we prove Theorem \ref{thm:index 18} for rational cohomology Bazaikin spaces.
We equip the universal cover of $M$ with the pull back metric. We also lift the torus action to the universal cover of $M$
(see \cite [Theorem I.9.1]{Bredon}) and then break the proof into subsections. Section \ref{sec:fixed point} discusses the case 
in which the lifted torus action on $\widetilde{M}$ has non-empty fixed point set. In Section \ref{sec:1 or 3}, 
we consider the case in which there is a circle inside $T^2$ whose action on $\widetilde{M}$ has a fixed point component 
of dimension one or three. In Section \ref{sec:5}, we consider the case of a circle inside $T^2$ with five-dimensional fixed point set. Finally, in Section \ref{sec:conclusion}, we conclude the proof. Before proceeding to the proof, we prove the following lemma:

\begin{lemma}\label{lem:totally geodesic}
Let $M^{13}$ be a closed, positively curved Riemannian manifold. If the universal cover of $M$ is a rational cohomology Bazaikin space, then $M$ does not have any totally geodesic submanifolds of codimension two or four.
\end{lemma}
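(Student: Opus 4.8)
The plan is to rule out the two codimensions separately, in each case contradicting the rational cohomology ring $H^*({\mathbb{CP}}^2\times\s^9;\Q)$ of a Bazaikin space supplied by Proposition \ref{prop:cohomology of Bazaikin spaces}. In both cases one may assume the totally geodesic submanifold $N$ is closed and connected, replacing it by a connected component if necessary.

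Suppose first that $N^{11}\subset M^{13}$ has codimension two. Since $M^{13}$ is closed, odd-dimensional, positively curved, and $13\geq 5$, Corollary \ref{cor:codimension 2} applies and forces the universal cover $\widetilde{M}$ to be a homotopy sphere. This is impossible, because $H^2(\widetilde{M};\Q)\cong\Q$ (as $\widetilde{M}$ is a rational cohomology Bazaikin space), whereas $H^2$ of a homotopy $13$-sphere vanishes.

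Suppose instead that $N^{9}\embedded M^{13}$ has codimension four. By the connectedness lemma (Theorem \ref{thm:connectedness lemma}) with $n=13$ and $k=4$, this inclusion is $(n-2k+1)$-connected, i.e.\ $6$-connected; in particular $\pi_1(N)\to\pi_1(M)$ is an isomorphism. Since $M$ is positively curved, $\pi_1(M)$ is finite, so $\widetilde{M}$ is closed, the preimage $\widetilde{N}$ of $N$ in $\widetilde{M}$ is connected and equals the universal cover of $N$ (hence is closed), both $\widetilde{M}$ and $\widetilde{N}$ are orientable (being simply connected), and the lifted inclusion $\widetilde{N}\embedded\widetilde{M}$ is again $6$-connected. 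Now apply Lemma \ref{lem:periodicity} with $n=13$, $k=4$, $l=3$; here $n-k-l=6$ and $n-k-2l=3>0$, so there is a class $e\in H^4(\widetilde{M};\Z)$ with $\cup e\colon H^i(\widetilde{M};\Z)\to H^{i+4}(\widetilde{M};\Z)$ bijective for $i=4,5$. Passing to $\Q$ coefficients and using that $\widetilde{M}$ is a rational cohomology Bazaikin space, the map $\cup e\colon H^4(\widetilde{M};\Q)\to H^8(\widetilde{M};\Q)$ would be an isomorphism $\Q\to 0$, which is absurd. Hence $M$ has no totally geodesic submanifold of codimension four either.

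The only point requiring a little care is the passage to universal covers in the codimension-four case: one must check that $\widetilde{N}$ really is the universal cover of $N$, so that it is simply connected, hence orientable, and the hypotheses of Lemma \ref{lem:periodicity} are met. This follows from the isomorphism $\pi_1(N)\cong\pi_1(M)$, which only requires the inclusion to be $2$-connected. Everything else is routine bookkeeping with the connectivity bounds of Theorem \ref{thm:connectedness lemma} and Lemma \ref{lem:periodicity} and with the degrees in Proposition \ref{prop:cohomology of Bazaikin spaces}, so I do not anticipate a genuine obstacle.
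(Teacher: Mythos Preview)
Your proof is correct and follows the same overall strategy as the paper: rule out codimension two via Corollary~\ref{cor:codimension 2}, and codimension four via the connectedness lemma plus the periodicity lemma. The execution in the codimension-four case differs slightly. The paper applies Lemma~\ref{lem:periodicity} directly to $N\hookrightarrow M$, obtains $H^5(M;\Z)\cong H^9(M;\Z)$, and then invokes the transfer isomorphism $H^*(M;\Q)\cong H^*(\widetilde{M};\Q)^{\Gamma}$ together with the proof of Lemma~\ref{lem:trivial action on cohomology} to see that $H^5(M;\Q)=0$ while $H^9(M;\Q)\cong\Q$. You instead lift to the universal cover first and apply periodicity there, reading off the contradiction $H^4(\widetilde{M};\Q)\cong\Q$ versus $H^8(\widetilde{M};\Q)=0$ directly from Proposition~\ref{prop:cohomology of Bazaikin spaces}. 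Your route trades the invariant-cohomology step for the (correctly handled) verification that $\widetilde{N}\hookrightarrow\widetilde{M}$ is still $6$-connected with both manifolds closed and orientable; it is marginally cleaner in that it avoids any discussion of the $\Gamma$-action on cohomology.
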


\begin{proof}
We proceed by contradiction. Let $N$ be a totally geodesic submanifold of $M$. If the codimension of $N$ equals two, 
then by Corollary \ref{cor:codimension 2}, $\widetilde{N}$ and $\widetilde{M}$ are homotopy spheres and this contradicts 
the assumption that $\widetilde{M}$ is a rational cohomology Bazaikin space. 
\par
Suppose now that the codimension of $N$ equals four. Theorem \ref {thm:connectedness lemma} 
implies that the inclusion $N\embedded M$ is $6$-connected. Therefore, by Lemma \ref{lem:periodicity}, 
the homomorphism $\cup e:H^i(M;\Z)\to H^{i+4}(M;\Z)$ is surjective for $3\leq i<6$ and injective for $3<i\leq 6$. 
This implies that $H^5(M;\Z)\cong H^9(M;\Z)$. Recall that $H^*(M;\Q)\cong H^*(\widetilde{M};\Q)^{\Gamma}$, 
where $\Gamma=\pi_1(M)$ and where $H^*(\widetilde{M};\Q)^{\Gamma}$ denotes the subring of elements invariant 
under the induced action of $\Gamma$ (see \cite[Theorem III.2.4]{Bredon}). Since $H^5(\widetilde{M};\Q)=0$, 
it follows that $H^5(M;\Q)=0$. On the other hand, $H^9(\widetilde{M};\Q)\cong\Q$ and $\Gamma$ acts trivially 
on $H^9(\widetilde{M};\Q)$ by the proof of Lemma \ref{lem:trivial action on cohomology}. Hence $H^9(M;\Q)\cong\Q$
and we have a contradiction.
\end{proof}

\begin{notation}
Throughout the rest of paper ${\widetilde{M}}^G$ (resp. $M^G$), where $G=S^1$ or $T^2$, denotes the fixed point set 
of the action of $G$ on ${\widetilde{M}}$ (resp. $M$). Similarly, ${\widetilde{M}}_x^G$ (resp. $M_x^G$) 
denotes the component of ${\widetilde{M}}^G$ (resp. $M^G$) containing $x$.
\end{notation}

\subsection{Torus action with fixed point}\label{sec:fixed point}

The first case in the proof of Theorem \ref{thm:index 18} is when the lifted torus action on $\widetilde{M}$ has a fixed point. 
In this case, we get a better bound for the index of cyclic subgroups of minimal index.

\begin{lemma}\label{lem:$T^2$-action with fixed point}
Let $M^{13}$ be a closed Riemannian manifold with positive sectional curvature which admits an effective isometric 
$T^2$-action. Suppose that the universal cover of $M$ is a rational cohomology Bazaikin space. If the lifted torus action 
on $\widetilde{M}$ has a fixed point, then $\pi_1(M)$ has a cyclic subgroup of index at most three. 
\end{lemma}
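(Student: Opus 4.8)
The plan is to work on the universal cover $\widetilde{M}$, where the lifted $T^2$-action is effective and commutes with the deck-transformation action of $\Gamma:=\pi_1(M)$; I would fix a point $x$ in the (nonempty) fixed set $\widetilde{M}^{T^2}$ and let $F$ be the component through $x$. Since $\Gamma$ commutes with $T^2$ it preserves $\widetilde{M}^{T^2}$ and permutes its components, so the subgroup $\Gamma_F=\{\gamma\in\Gamma:\gamma F=F\}$ acts freely on $F$ and $[\Gamma:\Gamma_F]$ equals the size of the $\Gamma$-orbit of $F$. The two things to prove are: (i) $\Gamma_F$ is cyclic; and (ii) $\widetilde{M}^{T^2}$ has at most three components, so that $[\Gamma:\Gamma_F]\le 3$ and $\Gamma_F$ is the desired cyclic subgroup.

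For (i) the first task is to pin down $\dim F$. Running the argument of Lemma~\ref{lem:totally geodesic} with $\widetilde{M}$ in place of $M$ (using $H^*(\widetilde{M};\mathbb{Q})\cong H^*(\mathbb{CP}^2\times S^9;\mathbb{Q})$) shows $\widetilde{M}$ has no totally geodesic submanifold of codimension two — by Corollary~\ref{cor:codimension 2} it would be a homotopy sphere — nor any orientable one of codimension four — by Lemma~\ref{lem:periodicity} that would force $H^5(\widetilde{M};\mathbb{Q})\cong H^9(\widetilde{M};\mathbb{Q})$. Writing the $T^2$-isotropy representation at $x$ as $T_x\widetilde{M}=T_xF\oplus\bigoplus_j V_j$ with nonzero weights $\chi_j$, each primitive weight line $\ell$ yields a circle $S^1_\ell\subseteq T^2$ fixing $x$ with $\dim\widetilde{M}^{S^1_\ell}_x=\dim F+2m_\ell$, where $m_\ell$ counts the weights on $\ell$. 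Effectiveness forces the $\chi_j$ to span $\mathbb{Q}^2$ (so $\bigoplus_j V_j\ne 0$ and no single line carries all weights), ruling out $\dim F=13$; the codimension-two and -four exclusions give $\dim F\ne 11,9$; Lemma~\ref{lem:periodicity} applied to the $T^2$-fixed submanifold $F$ itself (using $\delta(T^2)=2$ in the connectedness lemma) rules out $\dim F=7$; and $\dim F=5$ forces some $m_\ell=1$, hence a $7$-dimensional totally geodesic $N\supseteq F$ which is simply connected by the connectedness lemma and therefore, since $F$ has codimension two in $N$, a homotopy sphere by Corollary~\ref{cor:codimension 2} — contradicting $b_2(N)=b_2(\widetilde{M})=1$, because $N\hookrightarrow\widetilde{M}$ is $3$-connected. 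So $\dim F\in\{1,3\}$. If $\dim F=1$ then $F\cong S^1$ and the free isometric $\Gamma_F$-action is by rotations, so $\Gamma_F$ is cyclic. If $\dim F=3$ then, since $5$ cannot be partitioned into parts all $\ge 2$ without violating the codimension exclusions, some line has $m_\ell=1$, so $N:=\widetilde{M}^{S^1_\ell}_x$ is a closed positively curved totally geodesic $5$-manifold containing $F$ with codimension two; $\Gamma_F$ preserves $N$ and acts freely on $N$ and on $F$, so Proposition~\ref{prop:codimension 2} gives that $\Gamma_F$ is cyclic.

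For (ii) I would apply Theorem~\ref{thm:Allday} to the two circle factors of $T^2$ in turn, obtaining $\sum_i b_i(\widetilde{M}^{T^2};\mathbb{Q})\le\sum_i b_i(\widetilde{M};\mathbb{Q})=6$. Each component of $\widetilde{M}^{T^2}$ is a closed orientable manifold of dimension $1$ or $3$ — orientable since it is fixed by a torus acting on the orientable manifold $\widetilde{M}$ — hence contributes at least $b_0+b_{\dim}=2$ to the left-hand side, so there are at most three components. Since $\Gamma$ permutes them, $[\Gamma:\Gamma_F]\le 3$, and combined with (i) we conclude that $\pi_1(M)$ has a cyclic subgroup of index at most three.

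I expect the main obstacle to be part (i): the bookkeeping that combines the connectedness lemma, Lemma~\ref{lem:periodicity}, and the weight analysis of the $T^2$-isotropy representation to force $\dim F\le 3$, and in particular the homotopy-sphere-versus-$b_2$ contradiction that eliminates $\dim F=5$. Once the fixed components are known to be $1$- or $3$-dimensional, parts (i) and (ii) are both short, with Proposition~\ref{prop:codimension 2} and Theorem~\ref{thm:Allday} as the only substantive inputs.
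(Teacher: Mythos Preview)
Your proof is correct, but it is organized differently from the paper's. The paper does not attempt to pin down $\dim F$; instead it uses Borel's formula at $x$ to split into three cases. If $\dim F=1$ it argues exactly as you do. If $\dim F\ge 3$ and some circle $S^1\subseteq T^2$ satisfies $\codim(F\subseteq\widetilde{M}_x^{S^1})=2$, then Proposition~\ref{prop:codimension 2} applies directly to $\Gamma_F$ acting on $F\subseteq\widetilde{M}_x^{S^1}$, \emph{whatever} $\dim F$ may be. If $\dim F\ge 3$ and no such circle exists, then every nonzero term in Borel's formula is at least $4$; together with $\dim\widetilde{M}_x^{S^1}\le 7$ (from the codimension two/four exclusion) this forces $\dim F=3$ and every term equal to $4$, so the left side is a multiple of $4$ while the right side is $10$, a contradiction. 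Thus the paper never separately eliminates $\dim F\in\{5,7\}$: Borel's formula handles it in one stroke. Your route instead rules out $\dim F=7$ via the improved connectedness lemma with $\delta(T^2)=2$ and periodicity, and $\dim F=5$ via the $b_2$-versus-homotopy-sphere contradiction, which is correct but trades one invocation of Borel's formula for additional uses of Theorem~\ref{thm:connectedness lemma} and Corollary~\ref{cor:codimension 2}. For the component bound, the paper cites the even-degree inequality $\sum_i b_{2i}(\widetilde{M}^{T^2};\Q)\le\sum_i b_{2i}(\widetilde{M};\Q)=3$ directly, while you iterate Theorem~\ref{thm:Allday} over two circles to get $\sum_i b_i\le 6$; both give at most three components.
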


Note that Theorem \ref{thm:index 18} in the case of $T^3$ symmetry follows immediately since some $T^2\subseteq T^3$
has a fixed point by Theorem \ref{thm:Berger}.

\begin{proof}
Let $x$ be a fixed point for the $T^2$-action on $\widetilde{M}$ 
and let $\codim(P\subseteq Q)$ denote the codimension of $P$ in $Q$. Borel's formula (see \cite[Theorem 5.3.11]{AlldayPuppe}) states that
$$\sum_{S^1\subseteq T^2}\codim({\widetilde{M}}_x^{T^2}\subseteq {\widetilde{M}}_x^{S^1})=\codim({\widetilde{M}}_x^{T^2}\subseteq\widetilde{M}),$$
where we take the sum over all the circles inside $T^2$. Note that only finitely many terms contribute to this sum.
Note also that any fixed point component of an effective torus action on a positively curved manifold has even codimension.
We break the proof into cases:
      \begin{itemize}
      \item $\dim({\widetilde{M}}_x^{T^2})=1$. In this case, ${\widetilde{M}}_x^{T^2}$ is diffeomorphic to $\s^1$. 
       Set $\Gamma=\pi_1(M)$ and let $\Gamma_2\subseteq\Gamma$ be the subgroup of elements mapping the component   
       ${\widetilde{M}}_x^{T^2}$ to itself. Note that $\Gamma$ acts on ${\widetilde{M}}^{T^2}$ because its action 
       on $\widetilde{M}$ commutes with the $T^2$-action. Since ${\widetilde{M}}_x^{T^2}$ is a circle, $\Gamma_2$ is cyclic. 
       In order to calculate the index of $\Gamma_2$, note that $\Gamma$ acts on the set of components 
       of ${\widetilde{M}}^{T^2}$ and hence
       $$[\Gamma:\Gamma_2]\leq\#\{\text{components~of~}{\widetilde{M}}^{T^2}\}.$$
       In addition (for the second inequality, see \cite[Corollary 3.1.14]{AlldayPuppe}), 
       $$\#\{\text{components~of~}{\widetilde{M}}^{T^2}\}\leq\sum_i\dim H^{2i}({\widetilde{M}}^{T^2};       
        \mathbb{Q})\leq\sum_i\dim H^{2i}(\widetilde{M};\mathbb{Q})=3.$$
       Therefore, $[\Gamma:\Gamma_2]\leq 3$, as required.
      \item $\dim({\widetilde{M}}_x^{T^2})\geq 3$ and there exists $S^1\subseteq T^2$ 
       with $\codim({\widetilde{M}}_x^{T^2}\subseteq{\widetilde{M}}_x^{S^1})=2$. 
       Let $\Gamma_2$ be the subgroup of $\Gamma$ which acts on ${\widetilde{M}}_x^{T^2}$. 
       As argued in the previous case, the index of $\Gamma_2$ in $\Gamma$ is at most three.
       Since $\Gamma_2$ acts on ${\widetilde{M}}_x^{T^2}$, it also acts on ${\widetilde{M}}_x^{S^1}$. 
       Proposition \ref{prop:codimension 2} now implies that $\Gamma_2$ is cyclic.
      \item $\dim({\widetilde{M}}_x^{T^2})\geq 3$ 
       and $\codim({\widetilde{M}}_x^{T^2}\subseteq{\widetilde{M}}_x^{S^1})\neq 2$ for all $S^1\subseteq T^2$.
       By Lemma \ref{lem:totally geodesic}, the dimension of ${\widetilde{M}}_x^{S^1}$ is at most seven. Therefore,
       $$7\leq\dim({\widetilde{M}}_x^{T^2})+4\leq\dim({\widetilde{M}}_x^{T^2})+\codim({\widetilde{M}}_x^{T^2}\subseteq{\widetilde{M}}_x^{S^1})=\dim({\widetilde{M}}_x^{S^1})\leq    
       7$$
       for all $S^1\subseteq T^2$ with $\codim({\widetilde{M}}_x^{T^2}\subseteq{\widetilde{M}}_x^{S^1})>0$.
       Hence equality holds and we have $\dim({\widetilde{M}}_x^{T^2})=3$ and $\dim({\widetilde{M}}_x^{S^1})=7$.       
       On one hand, the right-hand side of Borel's formula equals 10. On the other hand, the left-hand side is a multiple of four.
       This is a contradiction.\qedhere
     \end{itemize}
\end{proof}

\subsection{Fixed point component of dimension one or three}\label{sec:1 or 3}

In the presence of $T^2$ symmetry, Theorem \ref{thm:Berger} guarantees existence of a circle whose fixed point set
is non-empty. In this section, we prove Theorem \ref{thm:index 18} in the case where this fixed point set has a component 
of dimension one or three. 

\begin{lemma}\label{lem:dimension 1 and 3}
Let $M^{13}$ and $T^2$ be as in Theorem \ref{thm:index 18}. If there exists $S^1\subseteq T^2$ 
such that ${\widetilde{M}}^{S^1}$ has a component ${\widetilde{M}}_x^{S^1}$ of dimension one or three,
then $\Gamma:=\pi_1(M)$ has a cyclic subgroup of index dividing six. 
\end{lemma}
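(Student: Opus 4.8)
The plan is to exploit the codimension-two fixed-point component together with the structural results from Section \ref{sec:preliminaries}. Let $S^1\subseteq T^2$ and $F:=\widetilde{M}_x^{S^1}$ be the given component, of dimension one or three, hence of codimension twelve or ten. Since $F$ is a totally geodesic submanifold fixed pointwise by $S^1$, the connectedness lemma (Theorem \ref{thm:connectedness lemma}) shows the inclusion $F\embedded\widetilde{M}$ is highly connected (at least $(n-2k+1+\delta)$-connected with $\delta\geq 1$), so by Lemma \ref{lem:periodicity} the rational cohomology of $\widetilde{M}$ is periodic with period equal to the codimension; comparing with $H^*(\widetilde{M};\Q)\cong H^*({\mathbb{CP}}^2\times\s^9;\Q)$ from Proposition \ref{prop:cohomology of Bazaikin spaces} forces, I expect, that $\dim F=3$ (the dimension-one case should be ruled out, or handled by an even stronger conclusion), and that $F$ itself is a rational cohomology sphere or an otherwise very restricted space. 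First I would nail down precisely which $F$ can occur and what $H^*(F;\Q)$ is.

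Next I would pass to the fundamental group. Let $\Gamma_2\subseteq\Gamma$ be the subgroup of deck transformations preserving the component $F$; as in the proof of Lemma \ref{lem:$T^2$-action with fixed point}, $\Gamma$ permutes the components of $\widetilde{M}^{S^1}$, and the number of such components is bounded by $\sum_i\dim H^i(\widetilde{M};\Q)$ — but to get the index dividing six I will need a sharper count, most plausibly $[\Gamma:\Gamma_2]\mid 6$ coming from the structure of $H^{\text{even}}$ and $H^{\text{odd}}$ of $\widetilde{M}$ (three even generators, three odd), or from combining this with the orientation-type argument of Lemma \ref{lem:trivial action on cohomology}. Then $\Gamma_2$ acts freely on $F$, and I would apply the group-theoretic machinery: Lemma \ref{lem:trivial action on cohomology} gives an index-$\leq 2$ subgroup acting trivially on cohomology; Theorem \ref{thm:Davis} (with the Euler-characteristic parity of the $13$-dimensional space) together with Theorem \ref{thm:Wolf} and Lemma \ref{lem:Z_p} applied to the circle action restricted near $F$ forces the Sylow subgroups to be cyclic after passing to a controlled-index subgroup; Burnside's classification (Theorem \ref{thm:Burnside}, and the collections $\mathcal{C}_d$) then yields a cyclic subgroup of index $1$ or $3$ inside that. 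Multiplying the indices $2$, $3$, and the permutation index gives a divisor of six.

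The hard part will be the index bookkeeping: showing that the product of (i) the index of the component-stabilizer $\Gamma_2$, (ii) the index of the trivial-cohomology-action subgroup, and (iii) the index coming from Burnside's $\mathcal{C}_d$ decomposition all collapse to a divisor of $6$ rather than something larger like $12$ or $18$. This requires knowing that the relevant obstructions (number of fixed components, the value of $d$, the $2$-primary part) cannot be simultaneously large, and that is exactly where Lemma \ref{lem:Z_p} applied to $\bar{M}=\widetilde{M}/S^1$ and $M^{S^1}$ — with the Lefschetz/Euler number $\chi(\bar{M},M^{S^1})$ computed via the Smith-Gysin sequence from the known rational cohomology of a Bazaikin space — must do the work of pinning down which $d$ can occur. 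I would compute that relative Euler characteristic explicitly (using $H^*(\widetilde{M};\Q)\cong H^*({\mathbb{CP}}^2\times\s^9;\Q)$ and the dimension-three fixed set), check that it divides into $6$, and conclude. The dimension-one subcase, if it is not vacuous, should give an even better bound (a cyclic subgroup of index dividing $3$) by the same argument with $F\cong\s^1$ contributing no further odd-order obstruction.
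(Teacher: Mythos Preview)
Your opening move does not work: for a component $F$ of codimension $10$ or $12$ in a $13$-manifold, the connectedness lemma gives $(13-2k+1+\delta)$-connectedness with $13-2k+1\leq -6$, so you learn nothing about $H^*(F;\Q)$ this way, and there is no periodicity to exploit. The dimension-one case is not ruled out; it is the easy case ($F\cong\s^1$ forces the stabilizer to be cyclic immediately).

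The more serious gap is where you apply Lemma \ref{lem:Z_p}. You propose to apply it to $\bar M=\widetilde M/S^1$ and $\widetilde M^{S^1}$, but then the relative Euler characteristic depends on \emph{all} components of $\widetilde M^{S^1}$, which you have not controlled. The paper instead restricts to the three-dimensional component $N:=\widetilde M_x^{S^1}$ and uses a \emph{second} circle $S_1^1\subseteq T^2$ acting on $N$; after reducing to the case $\widetilde M^{T^2}=\emptyset$ (via Lemma \ref{lem:$T^2$-action with fixed point}), one can take $S_1^1$ with $N^{S_1^1}=\emptyset$, and then Smith--Gysin gives $\chi(\bar N,\emptyset)=2$. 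Since the group acting is arranged to have \emph{odd} order, Lemma \ref{lem:Z_p} forces $d\mid 2$ with $d$ odd, hence $d=1$ and the stabilizer is actually cyclic, not merely close to cyclic.

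This odd-order reduction is also where your index bookkeeping goes wrong. The paper's order is: first pass to $\Gamma_1\subseteq\Gamma$ of index $\leq 2$ acting trivially on cohomology, then use Davis to write $\Gamma_1\cong\Z_{2^a}\times\Gamma_1'$ with $|\Gamma_1'|$ odd, and only \emph{then} take the component stabilizer $\Gamma_2'\subseteq\Gamma_1'$. Because $\Gamma_1'$ has odd order, ``index $\leq 3$'' becomes ``index dividing $3$''; and because $\chi(\bar N)=2$ is even, $\Gamma_2'$ is cyclic outright. The total index is then $2\cdot 3=6$. In your ordering (stabilizer first, then trivial-cohomology subgroup, then $\mathcal C_d$) the factors do not collapse, and you would indeed be stuck at $12$ or $18$ as you feared.
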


\begin{proof}
Let $\Gamma_1\subseteq\Gamma$ be the subgroup that acts trivially on $H^*(\widetilde{M};\Q)$. Recall by the proof of Lemma
\ref{lem:trivial action on cohomology} that $\Gamma_1$ has index at most two. By Theorem \ref{thm:Davis}, we have $\Gamma_1\cong{\mathbb{Z}}_{2^a}\times\Gamma'_1$ for some $a\geq 0$ and some group $\Gamma'_1$ of odd order.
Next, let  $\Gamma'_2\subseteq\Gamma'_1$ be the subgroup preserving the component ${\widetilde{M}}_x^{S^1}$.
As in the proof of Lemma \ref{lem:$T^2$-action with fixed point}, we see that ${\widetilde{M}}^{S^1}$
has at most three components and therefore that $\Gamma'_2\subseteq\Gamma'_1$ has index at most three. 
Note in addition that the index is a divisor of three because $\Gamma'_1$ has odd order. We claim that $\Gamma'_2$
is cyclic. In order to prove this, we consider each case separately.
      \begin{itemize}
      \item $\dim({\widetilde{M}}_x^{S^1})=1$. Since ${\widetilde{M}}_x^{S^1}$ is diffeomorphic to $\s^1$,
      $\Gamma'_2$ is cyclic.
      \item  $\dim({\widetilde{M}}_x^{S^1})=3$. Let $N:={\widetilde{M}}_x^{S^1}$. 
      By Lemma \ref{lem:$T^2$-action with fixed point}, we may assume that ${\widetilde{M}}^{T^2}=\emptyset$
      and hence that there exists $S_1^1\subseteq T^2$ such that $N^{S_1^1}=\emptyset$. We apply Lemma \ref{lem:Z_p} 
      to the action of $S_1^1$ on $N$. For this, we use the Smith-Gysin sequence to calculate 
      $\chi(\bar{N},N^{S_1^1})=\chi(\bar{N},\emptyset)=2$. Since $\Gamma'_2$ has odd order, Lemma \ref{lem:Z_p}
      (see also Remark \ref{remark:Z_p trivial}) implies that $\Gamma'_2$ does not contain a copy 
      of ${\mathbb{Z}}_p\times{\mathbb{Z}}_p$ for any $p$. By Theorem \ref{thm:Wolf}, every Sylow subgroup 
      of $\Gamma'_2$ is cyclic. Hence by Theorem \ref{thm:Burnside}, $\Gamma'_2\in{\mathcal{C}}_d$ for some $d\geq 1$. 
      Lemma \ref{lem:Z_p} then implies that $d=1$ and hence that $\Gamma'_2$ is cyclic by Remark \ref{remark:C_d}. 
      \end{itemize}
Therefore, $\Gamma'_1$, and hence $\Gamma_1$, has a cyclic subgroup of index dividing three. 
Since $[\Gamma:\Gamma_1]\leq 2$, we are done.
\end{proof}

\subsection{Five-dimensional fixed point set}\label{sec:5}

In this section, we assume that there exists some $S^1\subseteq T^2$ whose fixed point set is five-dimensional.
Note that by Lemma \ref{lem:dimension 1 and 3}, we only need to discuss the case in which all components 
of ${\widetilde{M}}^{S^1}$ are five-dimensional. Since each component of ${\widetilde{M}}^{S^1}$ is a totally geodesic 
and hence positively curved submanifold of $\widetilde{M}$, the first Betti number of each component of the fixed point set ${\widetilde{M}}^{S^1}$ is zero. In addition, by Thereom \ref{thm:Allday}, the sum of the Betti numbers 
of ${\widetilde{M}}^{S^1}$ is at most six. Moreover, ${\widetilde{M}}^{S^1}$ cannot have the same rational cohomology 
ring as $(\mathbb{CP}^1\times{\mathbb{S}}^3)\#(\mathbb{CP}^1\times{\mathbb{S}}^3)$ 
since otherwise we will have $\sum_i b_i({\widetilde{M}}^{S^1};\mathbb{Q})=\sum_i b_i(\widetilde{M};\mathbb{Q})$. 
But this contradicts Theorem \ref{thm:Allday} because the rational cohomology ring  
of $(\mathbb{CP}^1\times{\mathbb{S}}^3)\#(\mathbb{CP}^1\times{\mathbb{S}}^3)$ has four generators. 
Altogether, we get that each component of ${\widetilde{M}}^{S^1}$ has the rational cohomology of either $\s^5$
or $\mathbb{CP}^1\times\s^3$. Therefore, the rational cohomology ring of ${\widetilde{M}}^{S^1}$ must be the same 
as that of one of the following spaces:
\begin{equation}\label{eq:cohomology ring}
\underbrace{{\mathbb{S}}^5\sqcup\ldots\sqcup{\mathbb{S}}^5}_{\text{k times}}~(1\leq k\leq 3)\hspace{0.7cm},\hspace{0.7cm}\mathbb{CP}^1\times{\mathbb{S}}^3\hspace{0.7cm},\hspace{0.7cm}{\mathbb{S}}^5\sqcup(\mathbb{CP}^1\times{\mathbb{S}}^3).
\end{equation}

\begin{lemma}\label{lem:dim 5}
For $M^{13}$ and $T^2$ as in Theorem \ref{thm:index 18}, if there exists $S^1\subseteq T^2$ 
such that the fixed point set ${\widetilde{M}}^{S^1}$ is five-dimensional, then $\Gamma:=\pi_1(M)$ has a cyclic subgroup 
of index dividing $18$ or $27$.
\end{lemma}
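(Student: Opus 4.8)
The plan is to follow the same strategy as in the proof of Lemma \ref{lem:dimension 1 and 3}: first pass to the finite-index subgroup $\Gamma_1 \trianglelefteq \Gamma$ of index at most two that acts trivially on $H^*(\widetilde{M};\Q)$ (from Lemma \ref{lem:trivial action on cohomology}), then apply Theorem \ref{thm:Davis} to split off a cyclic $2$-group and reduce to the odd-order part $\Gamma_1' \leq \Gamma_1$. By Lemma \ref{lem:$T^2$-action with fixed point}, we may assume ${\widetilde{M}}^{T^2}=\emptyset$; in particular there is some circle $S_1^1 \subseteq T^2$, distinct from the given $S^1$, that acts on each five-dimensional component $N$ of ${\widetilde{M}}^{S^1}$ with $N^{S_1^1}$ of smaller dimension (possibly empty). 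The next step is to pass to the subgroup $\Gamma_2' \leq \Gamma_1'$ that preserves a chosen component $N$ of ${\widetilde{M}}^{S^1}$: since by \eqref{eq:cohomology ring} there are at most three such components, and $\Gamma_1'$ has odd order, $[\Gamma_1':\Gamma_2']$ is $1$ or $3$. One then applies Lemma \ref{lem:Z_p} to the $S_1^1$-action on $N$, with $\bar{N} = N/S_1^1$, to bound how non-cyclic $\Gamma_2'$ can be in terms of the Lefschetz/Euler data $\chi(\bar{N}, N^{S_1^1})$.

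The main work is the case analysis over the possible rational cohomology types of ${\widetilde{M}}^{S^1}$ listed in \eqref{eq:cohomology ring}, and for each, over the possible dimensions of $N^{S_1^1}$. For a fixed component $N$ with the rational cohomology of $\s^5$, the Smith-Gysin sequence with the term $N^{S_1^1}$ (which, being totally geodesic of even codimension in the positively curved $N^5$, is $\emptyset$, a point-like $\s^1$, or $\s^3$) gives $\chi(\bar N, N^{S_1^1}) \in \{2, 0, 0\}$ respectively; the Lefschetz number in these cases is $0$ or $2$, and the induced $\alpha$-action on the one-dimensional relative cohomology groups is trivial (Remark \ref{remark:Z_p trivial}), so Lemma \ref{lem:Z_p}, Theorem \ref{thm:Wolf}, and Burnside (Theorem \ref{thm:Burnside}) force $\Gamma_2' \in \mathcal{C}_d$ with $d \mid 2$, hence $d=1$ and $\Gamma_2'$ cyclic. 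For a component with the rational cohomology of $\mathbb{CP}^1 \times \s^3$, the Smith-Gysin computation is more delicate: here $N^{S_1^1}$ could have up to two components and the relative cohomology need not be one-dimensional in every degree, which is precisely the "first case in the proof of Lemma \ref{lem:dim 5}" flagged in Remark \ref{remark:Z_p trivial} — so one must compute the actual Lefschetz number $\text{Lef}(\alpha; \bar N, N^{S_1^1})$, not merely the Euler characteristic. I expect this to yield a value divisible by at most $3$, giving $\Gamma_2' \in \mathcal{C}_d$ with $d \mid 3$.

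Combining, $\Gamma_2'$ has a cyclic subgroup of index dividing $3$ (from $d$), $\Gamma_1'$ has a cyclic subgroup of index dividing $3 \cdot 3 = 9$ (the $3$ from $[\Gamma_1':\Gamma_2']$ times the $3$ from $d$), and $\Gamma_1$ hence $\Gamma$ picks up at most a factor of $2$ from $[\Gamma:\Gamma_1]$, for index dividing $18$. The $27$ must come from a configuration where the two factors of $3$ cannot be simultaneously saturated along with the factor of $2$ — I anticipate that when $[\Gamma_1':\Gamma_2']=3$ and $\Gamma_2'$ has a cyclic subgroup of index exactly $3$, a closer look at how $\Gamma_1'$ sits as an extension (using that a group in $\mathcal{C}_d$ has a specific structure, Remark \ref{remark:C_d}) shows the resulting index is $27$ rather than $18$ only when the index-$2$ reduction is unavailable, i.e., $\Gamma = \Gamma_1$ is already of odd order. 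The hard part will be bookkeeping the interaction of the three sources of index ($[\Gamma:\Gamma_1] \le 2$, $[\Gamma_1':\Gamma_2'] \mid 3$, and $d \mid 3$) to see that only $27$ or a divisor of $18$ can occur, together with the Smith-Gysin Lefschetz computation in the $\mathbb{CP}^1 \times \s^3$ case where the relative cohomology fails to be one-dimensional.
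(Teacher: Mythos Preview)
Your overall architecture (pass to $\Gamma_1$ of index $\le 2$, split off the odd part, restrict to a component, apply Lemma \ref{lem:Z_p}) matches the paper, but the case analysis is essentially inverted and contains a computational error that hides the real difficulty.

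First, when $N$ is a rational $\s^5$ and $N^{S_1^1}=\emptyset$, the Smith--Gysin sequence gives $H^*(\bar N;\Q)\cong H^*(\mathbb{CP}^2;\Q)$, so $\chi(\bar N,\emptyset)=3$, not $2$. (Once one assumes $\widetilde{M}^{T^2}=\emptyset$, the $T^2$-action on $N$ factors through $T^2/S^1$ and has empty fixed set, so the sub-cases $N^{S_1^1}\cong\s^1$ or $\s^3$ do not arise.) With the correct value $3$, Lemma \ref{lem:Z_p} only excludes $\Z_p\times\Z_p$ for $p\neq 3$; it does \emph{not} exclude $\Z_3\times\Z_3$. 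Hence you cannot invoke Theorem \ref{thm:Wolf} and Burnside directly to put $\Gamma_2'$ in some $\mathcal{C}_d$. This is exactly where the factors of $3$ and $9$ in the index bound originate, and it is the heart of the proof: one must show, using Proposition \ref{prop:3-groups}, that the Sylow $3$-subgroup is cyclic, $\Z_3\times\Z_3$, or $\Z_9\rtimes\Z_3$, and then use the normal $p$-complement result (Theorem \ref{thm:normal complement}) to pass to an index-$3$ subgroup with all Sylows cyclic before applying Burnside and Lemma \ref{lem:Z_p} again to get $d\mid 3$. Your outline omits both of these ingredients.

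Second, the roles of the two component types are reversed. The $\mathbb{CP}^1\times\s^3$ component is the \emph{easy} case: there is at most one such component, so no index is lost in restricting, and the Lefschetz computation (the eigenvalue argument flagged in Remark \ref{remark:Z_p trivial}) gives $\mathrm{Lef}(\alpha;\bar N)\in\{1,4\}$ for odd-order $\alpha$, forcing $d=1$ and $\Gamma_1'$ cyclic. The $\s^5$ components are the hard case, and the index $9$, $18$, or $27$ arises from combining the number of $\s^5$ components (one, two, or three) with the index-$9$ bound coming from the $3$-group analysis above --- not from any interaction with the $\mathbb{CP}^1\times\s^3$ case.
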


\begin{proof}
By the discussion before Lemma \ref{lem:dim 5}, we may assume that ${\widetilde{M}}^{S^1}$ 
has the same rational cohomology as one of the five spaces in (\ref{eq:cohomology ring}). We break the proof into cases:
        \begin{itemize}
        \item ${\widetilde{M}}^{S^1}$ has a component $N_1$ with the rational cohomology 
         of $\mathbb{CP}^1\times{\mathbb{S}}^3$. As in Lemma \ref{lem:dimension 1 and 3}, let $\Gamma_1\subseteq\Gamma$ 
         be the subgroup of index at most two that acts trivially on $H^*(\widetilde{M};\Q)$ and write $\Gamma_1$
         as $\Gamma_1\cong{\mathbb{Z}}_{2^a}\times\Gamma'_1$ for some $a\geq 0$ and some group $\Gamma'_1$ 
         of odd order. Since the action of $\Gamma'_1$ on $\widetilde{M}$ commutes with the $S^1$-action, $\Gamma'_1$ acts 
         on ${\widetilde{M}}^{S^1}$. Since ${\widetilde{M}}^{S^1}$ has at most one component with the rational cohomology 
         of $\mathbb{CP}^1\times{\mathbb{S}}^3$, $\Gamma'_1$ acts on $N_1$. 
         By Lemma \ref{lem:$T^2$-action with fixed point}, we may assume that ${\widetilde{M}}^{T^2}=\emptyset$ and  
         hence that there exists $S_2^1\subseteq T^2$ such that $N_1^{S_2^1}=\emptyset$. Let $\bar{N_1}:={N_1}/{S_2^1}$. 
         By applying the Smith-Gysin sequence to the pair $(\bar{N_1},N_1^{S_2^1})=(\bar{N_1},\emptyset)$, 
         it follows that $H^i(\bar{N_1};\Q)$ is isomorphic to $\Q$ for $i\in\{0, 4\}$, isomorphic to $\Q^2$ for $i=2$,
         and trivial otherwise.
         \par
         Let $\alpha\in\Gamma'_1$. We claim that ${\text{Lef}}(\alpha;\bar{N_1})$ equals $1$ or $4$. 
         In order to prove this, note that since $\alpha$ has odd order, the induced action of $\alpha$ on $H^i(\bar{N_1};\Q)$ 
         is trivial for $i\in\{0, 4\}$. Now, let $\lambda_1$ and $\lambda_2$
         denote the eigenvalues of $\alpha^*:H^2(\bar{N_1};\Q)\to H^2(\bar{N_1};\Q)$. Since $\alpha$ has finite order,
         $\lambda_1$ and $\lambda_2$ are roots of unity, so $\lambda_1+\lambda_2\in[-2,2]$. Moreover, the Lefschetz number 
         and hence $\lambda_1+\lambda_2$ is an integer. Therefore, $\lambda_1+\lambda_2\in\{-2,-1,0,1,2\}$.
         In addition, $\lambda_1$ and $\lambda_2$ have odd orders since $\alpha$ has odd order. 
         In particular, $\lambda_i\neq -1$, so $\lambda_1+\lambda_2\neq -2$. Similarly, if $\lambda_1\neq 1$, 
         then it is complex and hence $\lambda_2=\bar{\lambda}_1$ and $\lambda_1+\lambda_2=2Re(\lambda_1)$. 
         Therefore, if $\lambda_1+\lambda_2\in\{0, 1\}$, then $\lambda_1$ has order $4$ or $6$, a contradiction. 
         Hence $\lambda_1+\lambda_2\in\{-1,2\}$. This proves the claim.
         \par
         Lemma \ref{lem:Z_p} now implies that $\Gamma'_1$ does not contain 
         ${\mathbb{Z}}_p\times{\mathbb{Z}}_p$ for all $p$. Hence by Theorem \ref{thm:Wolf}, all Sylow subgroups 
         of $\Gamma'_1$ are cyclic. Theorem \ref{thm:Burnside} and Lemma \ref{lem:Z_p} then imply that 
         $\Gamma'_1\in{\mathcal{C}}_d$, where $d=1$. Therefore, $\Gamma'_1$, and hence $\Gamma_1$, is cyclic.
         This means that $\Gamma$ has a cyclic subgroup of index at most two.
        \item ${\widetilde{M}}^{S^1}$ is a rational cohomology $5$-sphere. Let $N_2:={\widetilde{M}}^{S^1}$. 
         Note that $\Gamma$ acts on $N_2$. Moreover, $\Gamma$ acts trivially on the rational cohomology of $N_2$ 
         by Weinstein's theorem. Therefore, we can apply Theorem \ref{thm:Davis} to conclude that  
         $\Gamma\cong{\mathbb{Z}}_{2^b}\times\Gamma'$, where $b\geq 0$ and $\Gamma'$ is a group of odd order. 
         As in the previous case, we can choose $S_3^1\subseteq T^2$ such that $N_2^{S_3^1}=\emptyset$ and apply 
         the Smith-Gysin sequence to the pair $(\bar{N_2},N_2^{S_3^1})=(\bar{N_2},\emptyset)$ to get $\chi(\bar{N_2})=3$.
         Lemma \ref{lem:Z_p} then implies that $\Gamma'$ does not contain a copy of ${\mathbb{Z}}_p\times{\mathbb{Z}}_p$ 
         for $p\neq 3$. This, together with Theorem \ref{thm:Wolf}, implies that all Sylow $p$-subgroups of $\Gamma'$ are cyclic 
         for $p\neq 3$. Now, we claim that the Sylow $3$-subgroup of $\Gamma'$ is either cyclic or isomorphic 
         to one of ${\mathbb{Z}}_3\times{\mathbb{Z}}_3$ or ${\mathbb{Z}}_9\rtimes{\mathbb{Z}}_3$. The idea is to prove 
         that $\Gamma'$ does not contain a copy of ${\mathbb{Z}}_3\times{\mathbb{Z}}_3\times{\mathbb{Z}}_3$, 
         $U(3,3)$, or ${\mathbb{Z}}_9\times{\mathbb{Z}}_3$. Proposition \ref{prop:3-groups} then implies 
         the claim. By Lemma \ref{lem:Z_p}, in order to prove that $\Gamma'$ does not contain a copy 
         of ${\mathbb{Z}}_3\times{\mathbb{Z}}_3\times{\mathbb{Z}}_3$, $U(3,3)$, 
         or ${\mathbb{Z}}_9\times{\mathbb{Z}}_3$, it suffices to find a normal subgroup of order three of each of these groups 
         which is not strictly contained in a cyclic subgroup. Existence of such a subgroup  is obvious in the case 
         of ${\mathbb{Z}}_3\times{\mathbb{Z}}_3\times{\mathbb{Z}}_3$ or ${\mathbb{Z}}_9\times{\mathbb{Z}}_3$. 
         As for $U(3,3)$, it has center isomorphic to ${\mathbb{Z}}_3$. This subgroup is normal and is not strictly contained 
         in a larger cyclic subgroup because every non-trivial element of $U(3,3)$ has order three.
         \par
         Now, we have a group $\Gamma'$ of odd order such that its Sylow $p$-subgroups are cyclic for $p\neq 3$ 
         and its Sylow $3$-subgroup is either cyclic, ${\mathbb{Z}}_3\times{\mathbb{Z}}_3$, 
         or ${\mathbb{Z}}_9\rtimes{\mathbb{Z}}_3$. Let $P_3$ be a Sylow $3$-subgroup of $\Gamma'$. 
         If $P_3$ is cyclic, then $\Gamma'\in{\mathcal{C}}_d$ for some $d\geq 1$ by Theorem \ref{thm:Burnside}. 
         If not, then we apply Theorem \ref{thm:normal complement}. Hence $\Gamma'$ 
         can be written in the form of $P_3N$ for some $N\trianglelefteq\Gamma'$ such that $P_3\cap N=\{1\}$. 
         Letting $\Gamma''_2:={\mathbb{Z}}_3N$ or $\Gamma''_2:={\mathbb{Z}}_9N$, depending on whether   
         $P_3\cong{\mathbb{Z}}_3\times{\mathbb{Z}}_3$ or $P_3\cong{\mathbb{Z}}_9\rtimes{\mathbb{Z}}_3$,
         we get an index three subgroup $\Gamma''_2$ of $\Gamma'$ such that all Sylow subgroups 
         of $\Gamma''_2$ are cyclic. By Theorem \ref{thm:Burnside}, it follows that $\Gamma''_2\in{\mathcal{C}}_d$ 
         for some $d$. The discussion above shows that either $\Gamma'$ or $\Gamma''_2$ has a cyclic subgroup of index $d$. 
         Now, Lemma \ref{lem:Z_p} implies that $d$ divides three. Therefore, $\Gamma'$, and hence $\Gamma$, has a cyclic    
         subgroup of index dividing nine, as required.
        \item ${\widetilde{M}}^{S^1}$ is the disjoint union of two rational cohomology $5$-spheres. In this case, 
         let ${\widetilde{M}}_x^{S^1}$ denote one of the two components of ${\widetilde{M}}^{S^1}$ 
         and let $\Gamma_3$ be the subgroup of $\Gamma$ of index at most two which acts on ${\widetilde{M}}_x^{S^1}$.
         Replacing $\Gamma$ by $\Gamma_3$ in the argument for the previous case, it follows that $\Gamma$ has a cyclic     
         subgroup of index dividing $18$.
        \item ${\widetilde{M}}^{S^1}$ is the disjoint union of three rational cohomology $5$-spheres. 
         Let ${\widetilde{M}}_x^{S^1}$ denote one of the components of ${\widetilde{M}}^{S^1}$
         and let $\Gamma_3\subseteq\Gamma$ be the subgroup of index at most three that acts 
         on ${\widetilde{M}}_x^{S^1}$. We again argue as in the second case, replacing $\Gamma$ by $\Gamma_3$
         to conclude that $\Gamma$ has a cyclic subgroup of index dividing $18$ or $27$.\qedhere
        \end{itemize}
\end{proof}

\subsection{Conclusion of proof}\label{sec:conclusion}

Let $M$ and $T^2$ be as in Theorem \ref{thm:index 18}. As remarked at the beginning of this section, 
we may lift the $T^2$-action to the universal cover of $M$. By Theorem \ref{thm:Berger}, we may choose $S^1\subseteq T^2$
such that ${\widetilde{M}}^{S^1}\neq\emptyset$. If ${\widetilde{M}}^{S^1}$ has a component of dimension one or three, 
then we are done by Lemma \ref{lem:dimension 1 and 3}. In particular, we are done if $\dim({\widetilde{M}}^{S^1})\leq 3$. 
In addition, if $\dim({\widetilde{M}}^{S^1})=5$, then the proof follows by Lemma \ref{lem:dim 5}. 
Recall also that the dimension of ${\widetilde{M}}^{S^1}$ is at most seven by Lemma \ref{lem:totally geodesic}.
We may assume therefore that $\dim({\widetilde{M}}^{S^1})=7$. 

Note that by Frankel's theorem (see \cite{Frankel}), ${\widetilde{M}}^{S^1}$ cannot have more than one component 
of dimension seven. Moreover, the seven-dimensional component ${\widetilde{M}}_x^{S^1}$ of ${\widetilde{M}}^{S^1}$
satisfies $b_2({\widetilde{M}}_x^{S^1};\Q)=1$ since the inclusion ${\widetilde{M}}_x^{S^1}\embedded\widetilde{M}$
is $3$-connected by Theorem \ref{thm:connectedness lemma}. By Poincar\'e duality, ${\widetilde{M}}_x^{S^1}$
has total Betti number at least four. Suppose for a moment that ${\widetilde{M}}^{S^1}$ is not connected.
Let ${\widetilde{M}}_y^{S^1}$ denote another component. By Theorem \ref{thm:Allday}, ${\widetilde{M}}_y^{S^1}$
is a rational $\s^1$, $\s^3$, or $\s^5$ because $\widetilde{M}$ has total Betti number six. 
It follows by Lemma \ref{lem:dimension 1 and 3} and by the second case in the proof Lemma \ref{lem:dim 5}
that $\pi_1(M)$ has a cyclic subgroup of index $1$, $2$, $3$, $6$, or $9$. 
Therefore, we may assume that ${\widetilde{M}}^{S^1}$ is connected.
 
\begin{lemma}\label{lem:dimension 7}
Suppose that $T^2$ acts effectively and isometrically on a closed, positively curved manifold $M^{13}$ whose universal cover 
is a rational cohomology Bazaikin space. If there exists $S^1\subseteq T^2$ such that ${\widetilde{M}}^{S^1}$
is connected and seven-dimensional, then $\pi_1(M)$ is cyclic, ${\widetilde{M}}^{T^2}\neq\emptyset$,
or there exists another circle $S_1^1\subseteq T^2$ such that ${\widetilde{M}}^{S_1^1}$ is non-empty and has dimension 
at most five.
\end{lemma}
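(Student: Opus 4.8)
The plan is to dispose of the first two alternatives and otherwise produce the third, so suppose throughout that $\widetilde M^{T^2}=\emptyset$. Write $N:=\widetilde M^{S^1}$; it is a closed, connected, totally geodesic, hence positively curved, submanifold of dimension $7$. Since $T^2$ is abelian it commutes with $S^1$, so $T^2$ preserves the connected set $N$ and the residual circle $\bar S^1:=T^2/S^1$ acts on $N$; a point of $N$ is $\bar S^1$-fixed precisely when it is $T^2$-fixed, so $N^{\bar S^1}=\widetilde M^{T^2}=\emptyset$. By Theorem \ref{thm:connectedness lemma} the inclusion $N\embedded\widetilde M$ is $(13-12+1+\delta(S^1))=3$-connected, so $N$ is simply connected with $b_2(N;\Q)=1$; combining Poincar\'e duality with Theorem \ref{thm:Allday} forces the total Betti number of $N$ to be $4$ or $6$, hence $H^*(N;\Q)$ is the rational cohomology ring of $\s^5\times\mathbb{CP}^1$ or of $\s^3\times\mathbb{CP}^2$. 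Finally, $\Gamma:=\pi_1(M)$ acts freely on $N$ as well, commuting with $\bar S^1$.

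The next step is a Frankel reduction. Let $S_1^1\subseteq T^2$ be a circle with $S_1^1\neq S^1$. Their Lie algebras are distinct lines in $\mathfrak{t}^2$, hence span it, so $S^1$ and $S_1^1$ topologically generate $T^2$ and therefore $\widetilde M^{S_1^1}\cap N=\widetilde M^{S_1^1}\cap\widetilde M^{S^1}\subseteq\widetilde M^{T^2}=\emptyset$. Now $\widetilde M^{S_1^1}$ is a closed totally geodesic submanifold of the positively curved manifold $\widetilde M^{13}$, of even codimension; if it were nonempty of dimension at least $7$ then $\dim\widetilde M^{S_1^1}+\dim N\geq 14>13$, so Frankel's theorem would force it to meet $N$ — a contradiction. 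Hence either $\widetilde M^{S_1^1}=\emptyset$ or $\dim\widetilde M^{S_1^1}\in\{1,3,5\}$, and in the latter case the third alternative holds for $S_1^1$. So it remains to treat the case in which $\widetilde M^{S_1^1}=\emptyset$ for every circle $S_1^1\neq S^1$, and here I must show that $\Gamma$ is cyclic.

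In this remaining case I would combine two lines of attack. First, if the $\bar S^1$-action on $N$ has an exceptional orbit, pick a nontrivial prime-order isotropy group $\Z_p$, so $N^{\Z_p}$ is a proper closed totally geodesic submanifold of $N^7$; by Frankel's theorem it has at most one component of dimension $\geq 4$, so if that component is five-dimensional it is $\Gamma$-invariant, and since $\Gamma$ acts freely on it and on $N$, Proposition \ref{prop:codimension 2} applied to $N^7$ shows $\Gamma$ is cyclic. Second, in the remaining situations (in particular when $\bar S^1$ acts almost freely on $N$), pass to the subgroup $\Gamma_1\trianglelefteq\Gamma$ of index at most two acting trivially on $H^*(\widetilde M;\Q)$ (Lemma \ref{lem:trivial action on cohomology}); since $\sum_{i=0}^{6}(-1)^ib_i(\widetilde M;\Q)=3$ is odd, Theorem \ref{thm:Davis} gives $\Gamma_1\cong\Z_{2^a}\times\Gamma'$ with $\Gamma'$ of odd order, acting freely on both $\widetilde M$ and $N$ and commuting with the $S^1$- and $\bar S^1$-actions. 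Running the Smith--Gysin sequence for $(\widetilde M/S^1,\widetilde M^{S^1})$ and for $(N/\bar S^1,N^{\bar S^1})=(N/\bar S^1,\emptyset)$ — using the cohomology of $\widetilde M$ and of $N$ from the first paragraph, exactly as in the proofs of Lemmas \ref{lem:dimension 1 and 3} and \ref{lem:dim 5} — computes the Euler characteristics appearing in Lemma \ref{lem:Z_p}; this, with Theorems \ref{thm:Wolf} and \ref{thm:Burnside}, forces every Sylow subgroup of $\Gamma'$ to be cyclic and places $\Gamma'$ in some ${\mathcal C}_d$, and a further application of Lemma \ref{lem:Z_p} to $H=\langle A,B^d\rangle$ (Remark \ref{remark:C_d}) bounds $d$ by those Euler characteristics. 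The hard part, which I expect to be the main obstacle, is precisely this last step: a single application of Lemma \ref{lem:Z_p} appears only to bound $d$ by a multiple of $3$, so one needs extra input — either a finer analysis of the $\Gamma'$-action on $H^*(N/\bar S^1;\Q)$ yielding a Lefschetz number prime to $3$, or an argument that in the constrained case the relevant exceptional stratum of the $\bar S^1$-action on $N$ must in fact be five-dimensional — together with a short argument removing the factor $\Z_{2^a}$ and the index-two ambiguity, so as to conclude that $\pi_1(M)$ itself, and not merely a small-index subgroup, is cyclic.
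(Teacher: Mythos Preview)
Your Frankel reduction in the second paragraph is correct and is in fact the paper's closing step too. The gap is in your remaining case, where $\widetilde M^{T^2}=\emptyset$ and every circle $S_1^1\neq S^1$ acts on $\widetilde M$ without fixed points. Your own diagnosis is accurate: running Smith--Gysin for the $\bar S^1$-action on $N=\widetilde M^{S^1}$ yields $\chi(\bar N)$ divisible by $3$ for either rational type of $N$, so Lemma~\ref{lem:Z_p} cannot exclude $\Z_3\times\Z_3$ nor force $d=1$; at best you get a cyclic subgroup of bounded index in $\Gamma_1$, and the passage through $\Gamma_1$ costs a further factor of $2$. Your exceptional-isotropy sub-case is likewise incomplete, since $N^{\Z_p}$ need not have a five-dimensional component, and when it does not Proposition~\ref{prop:codimension 2} is unavailable. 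So you never conclude that $\pi_1(M)$ itself is cyclic.

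The paper closes this gap by an entirely different device: it works on $M$ rather than $\widetilde M$ and invokes Rong's theorem \cite[Theorem~C]{Rong2005}, which says that if an isometric $S^1$-action on a closed positively curved manifold has no non-trivial finite isotropy groups, then $\pi_1$ is cyclic. This disposes of the semi-free case in one line. Otherwise, pick $q\in M$ with non-trivial finite isotropy $S_q^1\subseteq S^1$ and set $N':=M_q^{S_q^1}$; one checks $N'\cap M^{S^1}=\emptyset$ (else $M^{S^1}\subsetneq N'$, giving $\dim N'>7$ against Lemma~\ref{lem:totally geodesic}), and then Theorem~\ref{thm:Berger} applied to the $T^2$-action on $N'$ produces a circle $S_1^1\subseteq T^2$ with a fixed point in $N'$, which forces $S_1^1\neq S^1$ and hence $\widetilde M^{S_1^1}\neq\emptyset$ --- feeding straight back into your Frankel reduction. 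The idea you were missing is to look at finite isotropy of the given $S^1$-action on $M$ rather than of the residual $\bar S^1$-action on $N$: Rong's theorem handles its absence directly, whereas no amount of Lefschetz bookkeeping on $N/\bar S^1$ will by itself force $\pi_1(M)$ to be cyclic.
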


\begin{proof}
Let $p:\widetilde{M}\to M$ be the universal covering map. Note that $p({\widetilde{M}}^{S^1})=M^{S^1}$,
so $M^{S^1}$ is connected and seven-dimensional. 
\par
If the action of $S^1$ on $M$ does not have any non-trivial 
finite isotropy groups, then $\pi_1(M)$ is cyclic (see \cite[Theorem C]{Rong2005}) and we are done. 
Hence we may assume that this action has a non-trivial finite isotropy group $S_q^1$. Let $N:=M_q^{S_q^1}$. 
\par
We claim that $N\cap  M^{S^1}$ is empty. Indeed, if $q'\in N\cap  M^{S^1}$, 
then we have $M^{S^1}\subseteq M_{q'}^{S_q^1}=M_q^{S_q^1}$. 
Now, $q\notin M^{S^1}$ since the isotropy group $S_q^1$ is finite, so this inclusion is strict. But then the dimension 
of $M_q^{S_q^1}$ would be greater than seven, so we have a contradiction to Lemma \ref{lem:totally geodesic}.
\par
Next we claim that there exists a circle $S_1^1\subseteq T^2$ such that $N^{S_1^1}\neq\emptyset$. 
In order to prove this, note that $T^2$ acts on $N$. Let $H$ denote the kernel of this action. 
If $H$ is not finite, then there exists $S_1^1\subseteq H$ and we are done. 
If $H$ is finite, then the claim follows by applying Theorem \ref{thm:Berger} to the action of ${T^2}/H\cong T^2$ on $N$.
Therefore, we can choose $q_1\in N^{S_1^1}$.
\par
We claim that $S_1^1\neq S^1$. If instead $S_1^1=S^1$, then $q_1\in M^{S^1}$ since $q_1$ is fixed by $S_1^1$. 
But $q_1\in N$, so $q_1\in N\cap  M^{S^1}$, a contradiction.
\par
To conclude the proof, we consider two cases. If $\dim(M^{S_1^1})\leq 5$, then $\dim({\widetilde{M}}^{S_1^1})\leq 5$
and we are done. If not, then $M^{S^1}\cap M^{S_1^1}\neq\emptyset$ by Frankel's theorem. 
In this case, $T^2$ has a fixed point on $M$ and hence on $\widetilde{M}$.
\end{proof}

Lemma \ref{lem:dimension 7}, together with Lemmas \ref{lem:$T^2$-action with fixed point}, \ref{lem:dimension 1 and 3}, 
and \ref{lem:dim 5}, completes the proof of Theorem \ref{thm:index 18} for rational cohomology Bazaikin spaces.

\section{Proof of theorem \ref{thm:index 18} for mod $3$ cohomology Bazaikin spaces}\label{sec:Z_3}

In this section, we prove Theorem \ref{thm:index 18} for mod $3$ cohomology Bazaikin spaces. The idea here is to find a subgroup $\Gamma_3\subseteq\Gamma$ of index at most three which does not contain a copy of ${\mathbb{Z}}_p\times{\mathbb{Z}}_p$
for any $p$. Recall that by a result of Smith, ${\mathbb{Z}}_p\times{\mathbb{Z}}_p$ cannot act freely on a mod $p$ cohomology sphere (see \cite{Smith}). Heller proved that ${\mathbb{Z}}_p^3$ cannot act freely on a manifold $M$ 
with $H^*(M;{\mathbb{Z}}_p)\cong H^*({\mathbb{S}}^m\times{\mathbb{S}}^n;{\mathbb{Z}}_p)$, where $n>m$
(see \cite{Heller}, cf. \cite[Example 3.10.17]{AlldayPuppe}). 
We refine Heller's argument in a special case to prove the following lemma:

\begin{lemma}\label{lem:free actions on product of spheres}
If $M$ is a smooth manifold such that 
$H^*(M;{\mathbb{Z}}_p)\cong H^*({\mathbb{S}}^2\times{\mathbb{S}}^3;{\mathbb{Z}}_p)$ for some odd prime $p$,
then ${\mathbb{Z}}_p\times{\mathbb{Z}}_p$ cannot act freely on $M$. 
\end{lemma}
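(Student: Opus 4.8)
The strategy is to suppose for contradiction that $G = \mathbb{Z}_p \times \mathbb{Z}_p$ acts freely on $M$ and analyze the Borel construction $M_G = EG \times_G M$ over $BG$. I would work with $\mathbb{Z}_p$-coefficients throughout. Since $G$ acts freely, $M_G$ is homotopy equivalent to $M/G$, a closed $5$-manifold, so $H^i(M_G; \mathbb{Z}_p) = 0$ for $i > 5$. On the other hand, the Leray–Serre spectral sequence of the fibration $M \hookrightarrow M_G \to BG$ has $E_2^{s,t} = H^s(BG; \mathcal{H}^t(M; \mathbb{Z}_p))$, and since $H^*(M; \mathbb{Z}_p) \cong H^*(\mathbb{S}^2 \times \mathbb{S}^3; \mathbb{Z}_p)$ is concentrated in degrees $0, 2, 3, 5$, the first thing to pin down is the action of $G$ on $H^*(M;\mathbb{Z}_p)$. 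A free $G$-action forces $\chi(M) = 0$ (which is automatic here) but more importantly, by a transfer/Smith-theory argument one shows the action on cohomology must be trivial: any $\mathbb{Z}_p$ subgroup acts freely, hence by Smith theory cannot act trivially on a mod-$p$ cohomology sphere factor while fixing nothing, so one deduces the induced action on the degree-$2$ and degree-$3$ classes is by roots of unity of $p$-power order that must be trivial, else a subgroup would have the wrong fixed-point data. (I would isolate this as a preliminary claim.)

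**The main computation.** With trivial coefficients, $E_2^{s,t} = H^s(BG;\mathbb{Z}_p) \otimes H^t(M;\mathbb{Z}_p)$. Recall $H^*(BG;\mathbb{Z}_p)$ for $G = \mathbb{Z}_p \times \mathbb{Z}_p$, $p$ odd, is $\Lambda(u_1, u_2) \otimes \mathbb{Z}_p[v_1, v_2]$ with $|u_i| = 1$, $|v_i| = 2$. The requirement that $H^i(M_G;\mathbb{Z}_p) = 0$ for all $i > 5$ is extremely restrictive because the base cohomology is nonzero in every degree. So the spectral sequence must have large differentials killing essentially everything above the total degree $5$ line. Let $a \in H^2(M)$ and $b \in H^3(M)$ be generators. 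The differentials $d_3(b) \in E_3^{3,0} = H^3(BG;\mathbb{Z}_p)$ and the transgression-type behavior of $a$ govern the whole picture. Following Heller's method, I would track that $a$ survives to $E_3$ (since $d_2 = 0$ on a degree-$2$ class landing in $H^2(BG) \otimes H^1(M) = 0$), so $a$ is either a permanent cycle or $d_3(a) \in H^3(BG) \otimes H^0(M)$; and one shows $d_3(a) = 0$ forces $a$ permanent, which then would give $a^k \neq 0$ for all $k$ in $H^*(M_G)$, contradicting finite cohomological dimension. Hence $a$ supports a differential, i.e. $d_3(a) = \xi$ for some $0 \neq \xi \in H^3(BG;\mathbb{Z}_p)$. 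Similarly $b$ must transgress. The contradiction comes from counting: the ideal generated by these two transgression images cannot cut $H^*(BG;\mathbb{Z}_p)$ down to something finite-dimensional, because $H^*(BG;\mathbb{Z}_p)$ has Krull dimension $2$ and we are only killing (effectively) a bounded number of generators in low degree — quotienting a $2$-dimensional ring by finitely many elements of bounded degree cannot yield a finite-dimensional ring unless those elements form a system of parameters, and a degree count (the polynomial part $\mathbb{Z}_p[v_1,v_2]$ needs two algebraically independent elements of even degree killed, but the relevant differentials land in odd degree or are too few) shows they cannot.

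**Expected main obstacle.** The hard part will be making the spectral sequence bookkeeping airtight — specifically, ruling out every pattern of differentials on the two generators $a, b$ that would be consistent with $H^{>5}(M_G;\mathbb{Z}_p) = 0$, while using that $M_G \simeq M/G$ is a genuine closed manifold (so Poincaré duality over $\mathbb{Z}_p$ holds on $M_G$, which is a strong extra constraint Heller's general argument may not fully exploit). I expect to need the multiplicative structure: once $d_3(a) = \xi$ and $d_3(b) = \eta$ are forced, Poincaré duality on the $5$-manifold $M/G$ together with the fact that $H^*(M;\mathbb{Z}_p) = \Lambda(a,b)/(a^2, b^2)$-ish structure forces a relation between $\xi$ and $\eta$ in $H^*(BG;\mathbb{Z}_p)$ that is impossible for $\mathbb{Z}_p \times \mathbb{Z}_p$ (it would hold for $\mathbb{Z}_{p^2}$ or a cyclic group, which is exactly why the statement is sharp). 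A secondary subtlety is the preliminary claim that $G$ acts trivially on $H^*(M;\mathbb{Z}_p)$: I would handle it by restricting to each of the $p+1$ cyclic subgroups $\mathbb{Z}_p \subseteq G$, applying Smith theory (each acts freely on a mod-$p$ cohomology $\mathbb{S}^2 \times \mathbb{S}^3$, and a nontrivial cohomology action on the $\mathbb{S}^2$ or $\mathbb{S}^3$ factor is incompatible with freeness by the Smith-theory inequality on fixed sets), so that the trivial-coefficient spectral sequence above is the right one to run.
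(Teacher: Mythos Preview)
Your setup matches the paper: assume $G=\mathbb{Z}_p\times\mathbb{Z}_p$ acts freely, run the Serre spectral sequence of $M\to M_G\to BG$ with $\mathbb{Z}_p$ coefficients, and use that $H^i(M_G;\mathbb{Z}_p)\cong H^i(M/G;\mathbb{Z}_p)=0$ for $i>5$. Two points deserve correction, and the main computational strategy is genuinely different from the paper's.

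\textbf{Triviality of the coefficient system.} You propose a Smith-theory argument restricted to cyclic subgroups. This is unnecessary and the sketch you give is not quite right. The paper's reason is elementary: each $H^i(M;\mathbb{Z}_p)$ has dimension at most one, and a $p$-group acting linearly on a one-dimensional $\mathbb{Z}_p$-vector space acts through $\mathbb{Z}_p^\times$, which has order $p-1$ prime to $p$, hence trivially. No geometry is needed.

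\textbf{A genuine gap.} Your argument that $d_3(a)\neq 0$ is flawed. You write that if $a$ is a permanent cycle then ``$a^k\neq 0$ for all $k$ in $H^*(M_G)$'', but this is false: already at $E_2$ we have $a^2\in E_2^{0,4}=H^4(M;\mathbb{Z}_p)=0$, so $a^2=0$ in the spectral sequence and there is no contradiction with finite cohomological dimension from $a$ alone. In fact nothing prevents $a$ from being a permanent cycle; the paper's argument does not rule this out and does not need to. Your subsequent Krull-dimension heuristic (two transgression images cannot cut $H^*(BG)$ down to finite dimension) is the right intuition behind Heller-type results, but as stated it is not a proof, and the specific claim that the needed differentials ``land in odd degree or are too few'' is not substantiated.

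\textbf{How the paper actually argues.} Rather than tracking transgressions globally, the paper makes a single concrete dimension count at total degree $6$. One has $\dim E_2^{6,0}=\dim H^6(BG;\mathbb{Z}_p)=7$, and this must die at $E_\infty$. The only incoming differentials hitting the $(6,0)$ spot are $d_3$ from $E_3^{3,2}$, $d_4$ from $E_4^{2,3}$, and $d_6$ from $E_6^{0,5}$, whose $E_2$-dimensions sum to $4+3+1=8$. The paper then shows, by a short case analysis on whether $d_2:E_2^{1,3}\to E_2^{3,2}$ vanishes, that at least two of these eight source elements fail to survive to the relevant page or lie in the kernel of the relevant differential, leaving at most $6$ elements to kill a $7$-dimensional target. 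This is where the contradiction comes from. Your Poincar\'e duality idea for $M/G$ is not used.
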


\begin{proof}
Suppose by contradiction that $G={\mathbb{Z}}_p\times{\mathbb{Z}}_p$ acts freely on $M$ and consider the Serre spectral sequence associated to the Borel fibration $M\to M_G\to BG$. Since $p$ is odd and $\dim H^i(M;\Z_p)\leq 1$ for all $i$, 
$G$ acts trivially on $H^*(M;\Z_p)$ and the spectral sequence exists.
\par
Since $G$ acts freely on $M$, it follows from \cite[Proposition 3.10.9]{AlldayPuppe}) 
that $H^*(M_G;{\mathbb{Z}}_p)\cong H^*(M/G;{\mathbb{Z}}_p)$.
By \cite[Lemma 3.10.16]{AlldayPuppe}, this means that $H^i(M_G;{\mathbb{Z}}_p)=0$ for all $i>5$. 
In particular, $H^6(M_G;{\mathbb{Z}}_p)=0$ and hence $E_{\infty}^{6,0}$ is trivial. Observe that for the differential map $d_r:E_r^{m,n}\to E_r^{6,0}$, $m$ and $n$ satisfy $m+n=5$ and $n\geq 1$. Moreover, $E_2^{m,n}=0$ for $n=1$ or $4$.
Hence the only non-trivial differentials which can kill elements of $E_2^{6,0}$ come from $E_3^{3,2}$, $E_4^{2,3}$ 
and $E_6^{0,5}$. Since $H^*(BG;{\mathbb{Z}}_p)={\mathbb{Z}}_p[t_1,t_2]\tensor\wedge(s_1,s_2)$, 
where each $t_i$ has degree two and each $s_i$ has degree one, it follows that the set $\{t_1^it_2^j:i+j=3\}\cup\{s_1s_2t_1^it_2^j:i+j=2\}$ forms a basis for $E_2^{6,0}$ and hence $\dim E_2^{6,0}=7$. Moreover,
\begin{align*}
\dim E_3^{3,2}+\dim E_4^{2,3}+\dim E_6^{0,5} & \leq\dim E_2^{3,2}+\dim E_2^{2,3}+\dim E_2^{0,5}\\
 & =\dim H^3(BG;{\mathbb{Z}}_p)+\dim H^2(BG;{\mathbb{Z}}_p)+\dim H^0(BG;{\mathbb{Z}}_p)\\
 & =4+3+1=8.
\end{align*}
To get a contradiction, it suffices to find at least two basis elements in the groups $E_2^{3,2}$, $E_2^{2,3}$, and $E_2^{0,5}$
that do not survive to the appropriate page and kill any elements of $E_2^{6,0}$. 
In order to find such basis elements, we need to analyze the differentials. Fix generators $x\in H^3(M;{\mathbb{Z}}_p)$ 
and $y\in H^2(M;{\mathbb{Z}}_p)$. We break the proof into two cases:
         \begin{itemize}
         \item The differential $d_2:E_2^{1,3}\to E_2^{3,2}$ is non-zero. In this case, the differential 
         $d_2:E_2^{0,3}\to E_2^{2,2}$ is non-zero because $\{s_1x, s_2x\}$ forms a basis for $E_2^{1,3}$ and $d_2(s_i)=0$.
         Note that $d_2(x)$ is of the form $a_1t_1y+a_2t_2y+a_3s_1s_2y$ for some $a_i\in{\mathbb{Z}}_p$.
         Note also that at least one of $a_1$ or $a_2$ is non-zero since otherwise $d_2$ vanishes on $E_2^{1,3}$.
         Therefore, $d_2(s_ix)=-a_1s_it_1y-a_2s_it_2y$ and hence $d_2:E_2^{1,3}\to E_2^{3,2}$ is injective. This means
         that two of the basis elements of $E_2^{3,2}$ do not survive to the $E_3$ page and hence cannot kill any elements 
         of $E_3^{6,0}$.
         \item The differential $d_2:E_2^{1,3}\to E_2^{3,2}$ is zero. In this case, 
         $E_3^{3,2}\cong E_2^{3,2}\cong H^3(BG;{\mathbb{Z}}_3)$. Since $y$ survives to the $E_3$ page, the set 
         $\{\bar{s}_1\bar{t}_1\bar{y},\bar{s}_1\bar{t}_2\bar{y},\bar{s}_2\bar{t}_1\bar{y},\bar{s}_2\bar{t}_2\bar{y}\}$
         forms a basis for $E_3^{3,2}$, where the bars denote the images of elements from the $E_2$ page in the $E_3$ page. 
         Without loss of generality, we may assume that $d_3(\bar{y})\neq 0$ since otherwise $d_3:E_3^{3,2}\to E_3^{6,0}$        
         would be trivial and so $E_3^{3,2}$ cannot kill any elements of $E_3^{6,0}$. 
         Now, we claim that after possibly a change of basis in $E_3^{1,0}$ and $E_3^{2,0}$, $d_3(\bar{y})$ 
         can be written in the form of $\bar{s}_1\bar{t}_1$ or $\bar{s}_1\bar{t}_1+\bar{s}_2\bar{t}_2$. 
         In order to prove this, observe that     
         $d_3(\bar{y})=b_1\bar{s}_1\bar{t}_1+b_2\bar{s}_1\bar{t}_2+b_3\bar{s}_2\bar{t}_1+b_4\bar{s}_2\bar{t}_2$, 
         where $b_i\in{\mathbb{Z}}_p$. Moreover, since $d_3(\bar{y})\neq 0$, we may assume that $b_1\neq 0$.          
         Letting $s'_1=b_1\bar{s}_1+b_3\bar{s}_2$ implies $d_3(\bar{y})$ is of the form 
         $s'_1\bar{t}_1+c_1s'_1\bar{t}_2+c_2\bar{s}_2\bar{t}_2$ for some $c_1, c_2\in{\mathbb{Z}}_p$. Now, letting
         $t'_1=\bar{t}_1+c_1\bar{t}_2$ implies $d_3(\bar{y})=s'_1t'_1+c_2\bar{s}_2\bar{t}_2$. Finally, either $c_2=0$ 
         or $c_2\neq 0$. In the former case, $d_3(\bar{y})$ is of the form $\bar{s}_1\bar{t}_1$ and in the latter case, by letting 
         $s'_2=c_2\bar{s}_2$, it follows that $d_3(\bar{y})$ is of the form $\bar{s}_1\bar{t}_1+\bar{s}_2\bar{t}_2$. 
         \par
         The claim proves that the kernel of $d_3:E_3^{3,2}\to E_3^{6,0}$ is at least 1-dimensional and so not all basis     
         elements of $E_3^{3,2}$ can contribute to killing elements of $E_3^{6,0}$. Now, we only need to find one more basis 
         element in another group that does not kill any elements of $E_2^{6,0}$.  
         If $d_2(x)=0$, then $x$ survives to the $E_3$ page and so we have
         $d_3(\bar{x}\bar{y})=d_3(\bar{x})\bar{y}-\bar{x}d_3(\bar{y})\neq 0$. This means that $E_3^{0,5}$ does not survive 
         to the $E_4$ page and hence it cannot kill any elements of $E_6^{6,0}$. If instead $d_2(x)\neq 0$,
         then $d_2:E_2^{2,3}\to E_2^{4,2}$ would be non-zero. This means that at least one of the basis elements 
         of $E_2^{2,3}$ does not survive to the $E_3$ page and so it cannot kill any elements of $E_4^{6,0}$.\qedhere
        \end{itemize}
\end{proof}

We now proceed to the proof of Theorem \ref{thm:index 18}. As the proof for rational cohomology Bazaikin spaces shows, index $18$ or $27$ would possibly arise only when the fixed point set of the circle action is the disjoint union of either two or three rational cohomology $5$-spheres. Hence we only need to discuss those two cases.

First, suppose that ${\widetilde{M}}^{S^1}={\widetilde{M}}_x^{S^1}\sqcup{\widetilde{M}}_y^{S^1}$, where each 
component is a rational cohomology $5$-sphere. Consider ${\mathbb{Z}}_3\subseteq S^1$. 
We may assume that ${\widetilde{M}}_x^{S^1}={\widetilde{M}}_x^{{\mathbb{Z}}_3}$ since otherwise we get 
strict inclusions ${\widetilde{M}}_x^{S^1}\subsetneq{\widetilde{M}}_x^{{\mathbb{Z}}_3}\subsetneq\widetilde{M}$.
Lemma \ref{lem:totally geodesic} then implies that $\dim({\widetilde{M}}_x^{{\mathbb{Z}}_3})=7$. 
By Corollary \ref{cor:codimension 2}, ${\widetilde{M}}_x^{{\mathbb{Z}}_3}$ is a homotopy sphere.
Since the inclusion ${\widetilde{M}}_x^{{\mathbb{Z}}_3}\embedded\widetilde{M}$ is $3$-connected by Theorem
\ref{thm:connectedness lemma}, we have a contradiction.
Similarly, ${\widetilde{M}}_y^{S^1}={\widetilde{M}}_y^{{\mathbb{Z}}_3}$. Moreover,
$$\sum_i\dim H^i({\widetilde{M}}^{{\mathbb{Z}}_3};{\mathbb{Z}}_3)\leq\sum_i\dim H^i(\widetilde{M};{\mathbb{Z}}_3)\leq 10.$$
Together with Poincar\'e duality, this implies that at least one of the fixed point set components, say ${\widetilde{M}}_x^{S^1}$, has the mod $3$ cohomology of either ${\mathbb{S}}^5$ or ${\mathbb{S}}^2\times{\mathbb{S}}^3$
(note that ${\widetilde{M}}_x^{S^1}$ is positively curved, so it has vanishing first integral Betti number.
This, together with the universal coefficient theorem, implies that ${\widetilde{M}}_x^{S^1}$ cannot have the mod $3$ cohomology of ${\mathbb{S}}^1\times{\mathbb{S}}^4$). 
Let $\Gamma_3$ denote the subgroup of $\Gamma$ of index at most two which acts on ${\widetilde{M}}_x^{S^1}$. 
By Theorem \ref{thm:Davis}, we have $\Gamma_3\cong{\mathbb{Z}}_{2^c}\times\Gamma'_3$, where $c\geq 0$ 
and $\Gamma'_3$ is a group of odd order.
As the proof of Theorem \ref{thm:index 18} for rational cohomology Bazaikin spaces shows, $\Gamma'_3$ 
does not contain a copy of ${\mathbb{Z}}_p\times{\mathbb{Z}}_p$ for $p\neq 3$. In addition, Lemma 
\ref{lem:free actions on product of spheres}, together with the fact that ${\mathbb{Z}}_p\times{\mathbb{Z}}_p$ 
cannot act freely on a mod $p$ cohomology sphere, implies that $\Gamma'_3$ does not contain 
${\mathbb{Z}}_3\times{\mathbb{Z}}_3$ . Hence all Sylow subgroups of $\Gamma'_3$ are cyclic
and $\Gamma'_3\in{\mathcal{C}}_d$ for some $d\geq 1$. Our calculations from the previous section implies that $d$ 
divides three. This means that $\Gamma_3$ has a cyclic subgroup of index dividing three and hence $\Gamma$ has a cyclic subgroup of index dividing six. 
\par
The proof for the case where ${\widetilde{M}}^{S^1}$ is the disjoint union of three rational cohomology $5$-spheres 
is similar except that here the bound on $\dim H^*({\widetilde{M}}^{{\mathbb{Z}}_3};{\mathbb{Z}}_3)$ 
implies that at least one of the components of ${\widetilde{M}}^{S^1}$ is a mod $3$ cohomology $\s^5$. 
Moreover, in this case, the index of $\Gamma_3$ in $\Gamma$ is at most three and hence we get that $\Gamma$ 
has a cyclic subgroup of index $1, 2, 3, 6$, or $9$.

\end{document}